\newcommand{\bx}{\boldsymbol x}
\newcommand{\field}[1]{\mathbb{#1}}
\DeclareMathOperator{\PR}{\field{P}}             
\DeclareMathOperator{\E}{\field{E}}              
\def\N{\field{N}}                                
\def\R{\field{R}}                                
\def\F{\field{F}}                                
\def\PR{\mathop{\rm I\kern -0.20em P}\nolimits}  
\def\E{\mathop{\rm I\kern -0.20em E}\nolimits}   
\def\N{\mathop{\rm I\kern -0.20em N}\nolimits}   
\def\R{\mathop{\rm I\kern -0.20em R}\nolimits}   
\def\F{\mathop{\rm I\kern -0.20em F}\nolimits}   
\newtheorem{thm}{Theorem}[section]
\newtheorem{cor}[thm]{Corollary}
\newtheorem{lem}[thm]{Lemma}
\newtheorem{prop}[thm]{Proposition}
\newtheorem{ex}[thm]{Example}
\newtheorem{rem}[thm]{Remark}
\numberwithin{equation}{section}
\title[ Hidden regular variation]{Hidden regular variation: Detection and Estimation} 
\author[ A.\ Mitra ]{Abhimanyu Mitra}
\address{Abhimanyu Mitra\\ 206 Rhodes Hall\\ School of OR\&IE, Cornell University,
Ithaca, NY-14853\\ Tel: 1-607-255-2981} \email{am492@cornell.edu}
\author[ S. I.\ Resnick ]{Sidney I. Resnick}
\address{Sidney I. Resnick\\ 206 Rhodes Hall\\School of OR\&IE, Cornell University,
Ithaca, NY-14853 \\ Tel: 1-607-255-1210} \email{sir1@cornell.edu}
\begin{document}

\begin{center}
\maketitle

\end{center}
\normalsize

\begin{abstract}
{\it{Hidden regular variation}} defines a subfamily of distributions
satisfying  regular variation on $\mathbb{E} = [0,
\infty]^d \backslash \{(0,0, \cdots, 0) \} $ and models another
 regular variation on the sub-cone $\mathbb{E}^{(2)}
= \mathbb{E} \backslash \cup_{i=1}^d \mathbb{L}_i$,
where $\mathbb{L}_i$ is the $i$-th axis. We extend the concept of
{\it{hidden regular variation}} to sub-cones of $\mathbb{E}^{(2)}$ as
well. We suggest a procedure for detecting the presence of {\it{hidden
    regular variation}}, and if it exists, propose a method of
estimating the limit measure exploiting its semi-parametric
structure. We
 exhibit examples where {\it{hidden regular variation}}
 yields better estimates of probabilities of risk sets.   
\vskip 0.2 cm

{\it{Keywords}}: Regular variation; vague convergence; weak convergence;
  spectral measure; risk sets.

\end{abstract}

\section{Introduction} \label{intro}
Multivariate risks with Pareto-like tails 
 are usually modeled using the theory of
regular variation on cones. Let $ \mathbb{C}$ be a cone in $[-\infty,
\infty]^d$ satisfying $\bx \in \mathbb{C}$ implies $t\bx \in
\mathbb{C}$ for $t >0$ and denote the set of all non-negative Radon measures on
$\mathbb{C}$ by $M_+(\mathbb{C}).$ The distribution of a random vector
${\bf{Z}}$ is regularly varying on $\mathbb{C}$ if there exist a
scaling function $g(t) \uparrow \infty,$ and a non-zero Radon measure
$\chi \in M_+(\mathbb{C})$ such that 
\begin{equation}\label{reg_var_on_general_cone}
tP\left[ \frac{{\bf{Z}}}{g(t)} \in \cdot \right] \stackrel{v}{\rightarrow} \chi(\cdot)
\end{equation}
in $M_+(\mathbb{C}),$ where $\stackrel{v}{\rightarrow}$ denotes vague
convergence \cite{resnick:2008}. Risks with heavy tails could also be
modeled by stable distributions on a general convex cone; see
\cite{davydov:molchanov:zuyev:2007}.

Suppose the distribution of a random vector ${\bf{Z}}$ is regularly varying on the first quadrant
  $\E: = [0,  \infty]^d\backslash \{(0,0, \cdots 0) \}$ as in
\eqref{reg_var_on_general_cone} with limit measure $\nu$. It is
possible for $\nu$ to give  zero mass to a proper sub-cone
$\mathbb{C} {\subset} \E;$  for example, we could have
$$\mathbb{C}=
\E^{(2)} = \mathbb{E} \backslash \cup_{1 \le
  j_1< j_2 < \cdots < j_{d-1} \le d} \{x^{j_1} = 0, \cdots,
x^{j_{d-1}} = 0 \},
$$ 
the first quadrant with the axes removed.
If the distribution of $\bf{Z}$ is also
regularly varying on the subcone $\mathbb{C}$
 with scaling function $g_{\mathbb{C} } (t)\uparrow
\infty$ and $g(t)/g_{\mathbb{C}}(t) \to \infty$, then we say the
distribution of $\bf{Z}$ possesses
{\it hidden regular
variation (HRV)\/} on $\mathbb{C}$.
HRV
helps  detect  finer structure that may be ignored by
 regular variation on $\E.$ We will later refine our definition of hidden regular variation for a finite sequence of cones $\E \supset \mathbb{C}_1 \supset \mathbb{C}_2 \supset \cdots \mathbb{C}_m$.

Failure of  regular variation on $\E$ to distinguish
between independence and asymptotic independence prodded Ledford and Tawn
\cite{ledford:tawn:1996, ledford:tawn:1997} to define the
{\it{coefficient of tail dependence}} and this idea was extended to
hidden regular variation on $\E^{(2)}$ in \cite{resnick:2002a}. 
See also \cite{coles:heffernan:tawn:1999, dehaan:deronde:1998,
  draisma:drees:ferreira:dehaan:2004, heffernan:resnick:2005,
  campos:marron:resnick:jaffay:2005, maulik:resnick:2005, peng:1999,
  poon:rockinger:tawn:2003, ramos:ledford:2009, resnick:2004,
  starica:1999}.

Hidden regular variation provides  models that 
possess  regular variation on $\E$ and asymptotic
independence \citep[pages 323-325]{resnickbook:2007}. 
The concept has
typically been  considered in two dimensions
using the sub-cone $\E^{(2)}$. It is not clear how best to  extend the ideas of HRV to 
 dimensions  higher than two and one obvious remark  is that 
how one proceeds with definitions depends on the  sort of risk regions
being considered.

To demonstrate what is possible in higher dimensions, in this paper
we define
hidden regular variation on the sub-cones 
$$
\E^{(l)} = [0, \infty]^d \backslash \cup_{1 \le j_1< j_2 < \cdots <
  j_{d-l+1} \le d} \{x^{j_1} = 0, \cdots, x^{j_{d-l+1}} = 0 \}
,\quad 3 \le l \le d, $$ of
$\E$
and show with an example that 
asymptotic independence is not a necessary condition for HRV
 on $\E^{(l)}, \hskip 0.1 cm 3 \le l \le d.$
Hidden regular variation on $\E^{(l)},$ means that the
distribution of the random vector ${\bf{Z}}$ is 
 regularly varying on $\E$ as in
\eqref{reg_var_on_general_cone} with limit measure $\nu$ and
$\nu(\E^{(l-1)}) > 0,$ but  $\nu(\E^{(l)}) = 0.$ Also,
there is a scaling function $g_{\E^{(l)}} (t)$ satisfying
  $g(t)/g_{\E^{(l)}} (t) \to \infty$ which makes the distribution of 
${\bf{Z}}$  regularly varying on the cone $\E^{(l)}$ as in
\eqref{reg_var_on_general_cone} with limit measure $\nu^{(l)}$. Later,
when we define HRV on the finite sequence of cones $\E \supset
\E^{(2)} \supset \cdots \supset \E^{(d)},$ our definition of HRV on
$\E^{(l)}$ will be modified accordingly. 
We suggest 
exploratory methods for detecting the presence of hidden regular
variation on $\E^{(l)}, \hskip 0.1 cm 2 \le l \le d.$ The existing
method of detecting hidden regular variation on $\E^{(2)}$ is valid
only for dimension $d = 2,$ but our detection methods are applicable
for any finite dimension. 

If exploratory detection methods 
confirm
 data is consistent with the hypothesis of  
regular variation on a cone $\E^{(l)}$ as in
\eqref{reg_var_on_general_cone},  we must estimate 
 the limit measure $\nu^{(l)}$. Previous methods
 \citep{heffernan:resnick:2005} for estimating the
limit measure $\nu^{(2)}$ of hidden regular variation on
$\mathbb{E}^{(2)}$ 
have been non-parametric and ignored the semi-parametric structure
of $\nu^{(2)}.$
We offer some improvement
by exploiting the semi-parametric structure of $\nu^{(2)}$ 
and  estimate the parametric and
non-parametric parts of $\nu^{(2)}$ separately. 

On $\mathbb{E}$, estimation of the limit measure of  regular variation
is resolved by the familiar method of
the polar coordinate transformation ${\bf{x}} \to (|| {\bf{x}}||,
{{\bf{x}}}/{|| {\bf{x}}||});$  after this transformation, the  limit measure
$\nu$  is a product of a
probability measure $S$ and a Pareto measure $\nu_{\alpha}$,
$\nu_{\alpha}((r, \infty]) = r^{-\alpha}, r > 0$ \citep[pages
168-179]{resnickbook:2007}. Trying  to decompose
$\nu^{(2)}$ in this way presents the difficulty that the decomposition gives a
Pareto measure $\nu_{\alpha^{(2)}}$ and a possibly infinite Radon
measure \cite[pages 324-339]{resnickbook:2007}. 
So we 
transform  to a different coordinate system after which
$\nu^{(2)}$ is a product of a Pareto measure
$\nu_{\alpha^{(2)}}$ and a probability measure $S^{(2)}$ on $\delta
\aleph^{(2)} = \{ {\bf{x}} \in \E^{(2)}: x^{(2)} = 1\}$, where
$x^{(2)}$ is the second largest component of $\bx$.  We call the
probability measure $S^{(2)}$ the {\it hidden angular measure\/}
on $\E^{(2)}.$ We 
suggest procedures for consistently estimating the parameter $\alpha^{(2)}$ of the
Pareto measure $\nu_{\alpha^{(2)}}$ and the hidden angular measure $S^{(2)}$ 
and explain how these estimates lead to an estimate of
$\nu^{(2)}$.
If HRV on 
$\E^{(l)}$ is present for some  $3 \le l \le d,$   there is a similar
transformation of  coordinates making $\nu^{(l)}$ a
product of a Pareto measure $\nu_{\alpha^{(l)}}$ and a probability
measure $S^{(l)}$ on $\delta \aleph^{(l)} = \{ {\bf{x}} \in \E^{(l)}:
x^{(l)} = 1\},$ where $x^{(l)}$ is the $l$-th largest component of
  $\bx$.  We call  this probability measure $S^{(l)}$, the hidden
angular measure on $\E^{(l)}$ and employ similar estimation
methods for $l \geq 3$ as we did for $l=2$.

 For empirical exploration of the spectral or hidden spectral
 measures, it is often
desirable to make density plots. However, the hidden spectral measure
$S^{(l)}$ is supported on $\delta \aleph^{((l)}$, which is a difficult
plotting domain. For example, when $d =3,$ the set $\delta
\aleph^{(2)}$ is a disjoint union of six rectangles lying on three
different planes as shown in Figure \ref{deltan2}. Though $\delta 
\aleph^{(l)}$ is a $(d-1)$-dimensional set, $d$-dimensional vectors are needed to represent $\delta
\aleph^{(l)}$. So, the density plots on $\delta
\aleph^{(l)}$ also requires an additional dimension. In the two dimensional case, the problem is resolved by taking a transformation of points from $\delta
\aleph^{(1)} =  \{ {\bf{x}} \in \E : x^{(1)} = 1\}$ to $[0,1]$ and looking at the density of the induced probability measure of the transformed points \cite[ pages 316-321]{resnickbook:2007}. 
 We seek similar
appropriate transformations in higher dimensional cases. 
We devise a transformation of points from $\delta
\aleph^{(l)}$
to the $(d-1)$-dimensional simplex $\Delta_{d-1} = \{ {\bf{x}} \in
[0,1]^{d-1}: \sum_{i=1}^{d-1} x^i \le 1\}$ (see Section \ref{diff_par}). The probability measure $\tilde S^{(l)}$ on the transformed points induced by $S^{(l)}$ is called the transformed (hidden) spectral measure. Since the set $\Delta_{d-1}$ is represented by $(d-1)$-dimensional vectors, the problem of incorporating an additional dimension in the density plots vanishes.

\begin{figure}
\centering
\scalebox{0.6}
{\includegraphics{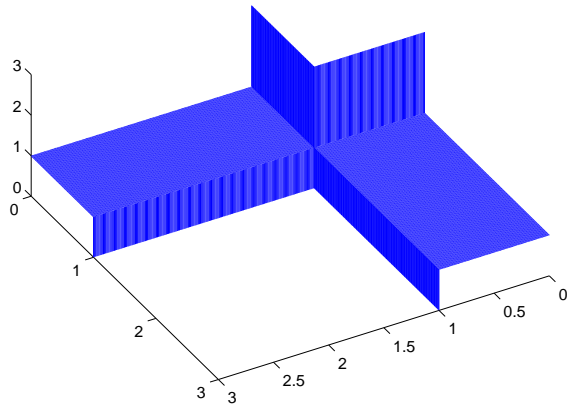}}
\caption{ The shaded region shows the set $\delta \aleph^{(2)},$ when $d = 3$.}
\label{deltan2}
\end{figure}

For characterizations of hidden regular variation
\citep{maulik:resnick:2005}
 it is useful to 
know if 
$\nu^{(l)}(\{ {\bf{x}} \in \E^{(l)}: ||{\bf{x}}|| > 1 \})$ is finite
or not, where $||{\bf{x}}||$ is any norm of ${\bf{x}}$. Such knowledge
is also useful for estimating probabilities of some risk sets.
For example, if $\nu^{(l)}(\{ {\bf{x}}
\in \E^{(l)}: ||{\bf{x}}|| > 1 \})$ is finite, then so is
$\nu^{(l)}(
\{ {\bf{x}} \in \E^{(l)}: a_1x^1 + a_2x^2 + \cdots + a_dx^d > y
\}, a_i > 0, i= 1, 2, \cdots, d, y > 0$.
We show that this issue can be
resolved by checking a moment condition. 

\subsection{Outline}
Section \ref{notations} explains  notation. In Section
\ref{hiddene02sec}, we review the definitions of  regular
variation on $\E$ and hidden regular variation on $\E^{(2)}$, and
extend the concept to the
sub-cones $\E^{(l)} = [0, \infty]^d \backslash \cup_{1 \le j_1< j_2 <
  \cdots < j_{d-l+1} \le d} \{x^{j_1} = 0, \cdots, x^{j_{d-l+1}} = 0
\}$,  $3 \le l \le d.$ Section \ref{extn_hidden_reg_var}
discusses exploratory detection techniques for hidden regular
variation on $\E^{(l)}$ and estimation of the limit measure $\nu^{(l)}.$
We consider in Section \ref{diff_para_sec} a
transformation that allows us to visualize the hidden angular measure
$S^{(l)}$ through another probability measure $\tilde S^{(l)}$ on the  $(d-1)$-dimensional simplex. In Section
\ref{decide_finite_sec}, we discuss conditions for $\nu^{(l)}(\{
{\bf{x}} \in \E^{(l)}: ||{\bf{x}}|| > 1 \})$ being finite or not. Section \ref{risksetcomputation} gives examples of risk sets where
hidden regular variation helps in obtaining finer estimates of their
probabilities. Our methodologies are applied to  two examples in
Section \ref{sec:eg}. We conclude with some remarks and outline open issues in 
Section \ref{conclusion}.

\subsection{Notation}\label{notations}

\subsubsection{{\bf{Vectors and cones}}} For  denoting a vector and its components, we use:
$${\bf{x}} = (x^1, x^2, \cdots, x^d), \hskip 1 cm x^i = \hbox{$i$-th component of } {\bf{x}}, \hskip 0.1 cm i = 1, 2, \cdots, d.$$
The vectors of all zeros, all ones and all infinities are denoted by
${\bf{0}}= (0, 0, \cdots, 0),$ ${\bf{1}} = (1, 1, \cdots, 1)$ and
${\boldsymbol{\infty}} = (\infty, \infty, \cdots, \infty)$
respectively. Operations on and between vectors are understood
componentwise. In particular, for non-negative vectors $\bx$ and 
 ${\boldsymbol{\beta}} = (\beta^1, \beta^2, \cdots, \beta^d)$, write
 ${\bf{x}}^{{\boldsymbol{\beta}}} = ( {(x^1)}^{\beta^1}, {(x^2)}^{\beta^2}, \cdots, {(x^d)}^{\beta^d})$.
We denote the norm of ${\bf{x}}$ as $|| {\bf{x}}||$. Unless specified, this could be taken as any norm. For the $i$-th largest component of ${\bf{x}}$, we use:
$$ x^{(i)} = \hbox{$i$-th largest component of } {\bf{x}}, \hskip 0.1 cm i = 1, 2, \cdots, d, \hskip 0.3 cm \hbox{i.e.}  \hskip 0.3 cm x^{(1)} \ge x^{(2)} \ge \cdots \ge x^{(d)}. $$
So, the superscripts denote components of a vector and the ordered
component is denoted by a parenthesis in the superscript. 

Sometimes, we have to sort the $i$-th largest components of the
vectors ${\bf{Z}}_1, {\bf{Z}}_2,
\cdots, {\bf{Z}}_n$ in non-increasing order. We
first obtain the vector $\{ Z_1^{(i)}, Z_2^{(i)},
\cdots, Z_n^{(i)} \}$ by taking the $i$-th largest component for each
${\bf{Z}}_j$ and then sort these to get 
$$ Z_{(1)}^{(i)} \ge Z_{(2)}^{(i)} \ge \cdots \ge Z_{(n)}^{(i)}.$$
We use the parentheses in the subscript to avoid double parentheses on
the superscript.

The cones we consider are
\begin{align*}
\E =&  \E^{(1)} =[0, \infty]^d \backslash \{ {\bf{0}} \} = [0,
\infty]^d \backslash \{x^1= 0, \cdots, x^d = 0 \}\\ 
=& [0, \infty]^d
\backslash \{x^{(1)} = 0 \}
=\{\bx \in [0,\infty]^d: x^{(1)}>0\}
\end{align*}
and for $2 \le l \le d$,
\begin{align*}
\E^{(l)} =& [0, \infty]^d \backslash \cup_{1 \le j_1< j_2 < \cdots <
  j_{d-l+1} \le d} \{x^{j_1} = 0, \cdots, x^{j_{d-l+1}} = 0 \} \\
=& [0,
\infty]^d \backslash \{ x^{(l)} = 0 \}
=\{\bx \in [0,\infty]^d: x^{(l)}>0\}.
\end{align*}
For $2 \le l \le d,$ $\E^{(l)}$ is the set of points in $\E$ such that at least $l$ components are positive. Sometimes $\E^{(2)}$ is expressed as $\E^{(2)} = \E \backslash \cup_{i=1}^d \mathbb{L}_i$,
where $\mathbb{L}_i:= \{ t {\bf{e}}_i, t > 0 \}$ is the $i$-th axis
and ${\bf{e}}_i = (0, \ldots, 0, 1, 0, \ldots, 0)$, where $1$ is in
the $i$-th position, $i = 1, 2, \ldots, d$. For ${\bf{x}} \in \E,$ we
use $[{\bf{0, x}}]^c$ to mean 
$
[{\bf{0, x}}]^c = \E \backslash [{\bf{0, x}}] = \{ {\bf{y}} \in \E: \vee_{i = 1}^d {y^i}/{x^i} > 1 \}.
$

\subsubsection{{\bf{Regular variation and vague convergence.}}}
\label{subsec:rv_vague}
 We express  vague convergence \citep[page 173]{resnickbook:2007} of Radon measures as
$\stackrel{v}{\rightarrow}$ and
weak convergence of probability measures \cite[page
14]{billingsley:1999}
as $\Rightarrow$. Denote the set of non-negative Radon
measures on a space $\F$ as $M_+(\F)$ and the set of all non-negative
continuous functions with compact support from $\F$ to $\R^+$ as
$C_K^+(\F)$. The notation $RV_{\rho}$ means the family
of one dimensional regularly varying functions with exponent of
variation $\rho$ (\citep[page 24]{resnickbook:2007},
\cite{bingham:goldie:teugels:1987, 
dehaan:ferreira:2006}). For any
measure $m$ and a real-valued function $f$, denote the integral
$\int f({\bf{x}})m(dx)$ by $m(f)$. 

For defining regular variation of distributions of random vectors on
$\E = \E^{(1)}$ as in \eqref{reg_var_on_general_cone}, we use the
scaling function $b(t) = b^{(1)}(t)$ and get the limit measure $\nu =
\nu^{(1)}$. Similarly, for defining regular variation of
distributions of random vectors on $\E^{(l)}$, $2 \le l \le d$, we use
the scaling function $b^{(l)}(t)$ and get the limit measure
$\nu^{(l)}$. For each $1 \le l \le d$, define the set
$\aleph^{(l)}$ by $\aleph^{(l)}  = \{ {\bf{x}} \in \E^{(l)} : x^{(l)}
\ge 1\}$. 
Since $\aleph^{(l)}$ is compact in $\E^{(l)}$,  there always exists a suitable choice of the scaling
function $b^{(l)}(t)$ which makes $\nu^{(l)}( \aleph^{(l)}) = 1.$  We
assume this from now on.

For each $2 \le l \le d$, if we have hidden regular variation on
$\E^{(l)}$, the limit measure $\nu^{(l)}$  can be expressed in a
convenient coordinate system
as a
product of a Pareto measure 
$\nu_{\alpha^{(l)}} (dr)= \alpha^{(l)}  r^{-\alpha^{(l)}-1}dr,\, r>0
$ and a probability
measure $S^{(l)}$ 
on the compact set $\delta \aleph^{(l)}  = \{ {\bf{x}} \in \E^{(l)} : x^{(l)} = 1\}$.
The  measure $S^{(l)}$ is called the
hidden angular or hidden spectral measure on $\E^{(l)}.$ 
 Whenever
$\nu^{(l)}( \{ {\bf{x}} \in \E^{(l)} : x^{(l)} \ge 1, x^{(1)} = \infty
\}) = 0,$ we view $S^{(l)}$ through its transformed version denoted
$\tilde S^{(l)}$, which is a probability measure on the
$(d-1)$-dimensional simplex $\Delta_{d-1} = \{ {\bf{x}} \in
[0,1]^{d-1} : \sum_{i=1}^{d-1} x^i \le 1 \}$. 

\subsubsection{{\bf{Anti-ranks.}}}\label{subsec:anti}
Suppose, ${\bf{Z}}_1, {\bf{Z}}_2, \cdots, {\bf{Z}}_n$ are random vectors in ${[0,\infty)}^d$. For $j = 1, 2, \ldots, d$, $i = 1, 2, \cdots, n$, define the anti-rank
\begin{equation*}
r_i^j = \sum_{l = 1}^n 1_{\{ Z_l^j \ge  Z_i^j\}}
\end{equation*}
 for $Z_i^j$ to be the number of $j$-th components greater than or equal to $Z_i^j$. For $2 \le l \le d$, define
 \begin{align*}
 m_i^{(l)} = \hbox{the $l$-th largest component of} \hskip 0.2 cm ( \frac{1}{r_i^j}, j = 1, 2, \ldots, d)
 \end{align*}
 and then order them as
 \begin{align*}
 m_{(1)}^{(l)} \ge m_{(2)}^{(l)} \ge \cdots \ge m_{(n)}^{(l)}.
 \end{align*}

\section{Hidden regular variation}\label{hiddene02sec}
We give more details about regular variation
on $\E$ and HRV on $\E^{(2)}$ and then extend the
definitions to hidden regular variation on sub-cones of
$\E^{(2)}$. We illustrate with some examples.

\subsection{Hidden regular variation on $\E^{(2)}$ } Consider regular variation on $\E$ and hidden regular variation
on $\E^{(2)}$. 

\subsubsection{{{The standard case}}}
The distribution of  ${\bf{Z}} = (Z^1, Z^2, \cdots,
Z^d)$ is  regularly varying on $\E:=[0, \infty]^d
\backslash \{{\bf{0}}\}$ with limit measure $\nu$ if there exist a
function $b(t) \uparrow \infty$ as $t \to \infty$ and a non-negative
non-degenerate Radon measure $\nu \ne 0$ such that 
\begin{equation}\label{stan_reg_var}
tP\left[ \frac{{\bf{Z}}}{b(t)} \in \cdot \right] \stackrel{v}{\rightarrow} \nu(\cdot) \quad \text{in $M_+(\E)$}.
\end{equation}
 The limit measure $\nu$ must have all non-zero marginals. Then, there exists $\alpha > 0$ such that $b(\cdot) \in RV_{1/\alpha}$ and $\nu$ satisfies the scaling property
\begin{equation}\label{scaling_e}
\nu (c \cdot) = c^{-\alpha}\nu(\cdot), \hskip 1 cm c > 0.
\end{equation}
Call the limit relation \eqref{stan_reg_var} the standard
case which requires the same scaling function $b(t)$
 for all the components of ${\bf{Z}}$ in
\eqref{stan_reg_var} and ensures that $\nu$ has all non-zero
marginals. 

HRV allows for another regular variation on a
sub-cone such as $\E^{(2)}$.
The distribution of  ${\bf{Z}}$ has  hidden regular variation on
$\E^{(2)}$ if in addition to \eqref{stan_reg_var} there exist a
non-decreasing function $b^{(2)}(t) \uparrow \infty$ such that $b(t)/
b^{(2)}(t) \to \infty$ and a non-negative Radon measure $\nu^{(2)}
\ne 0$ on $\E^{(2)}$ such that 
\begin{equation}\label{hidden_reg_var}
tP\left[ \frac{{\bf{Z}}}{b^{(2)}(t)} \in \cdot \right] \stackrel{v}{\rightarrow} \nu^{(2)}(\cdot) \quad \text{in $M_+(\E^{(2)})$};
\end{equation}
 see \cite[page 324]{resnickbook:2007}. It follows from \eqref{hidden_reg_var} that there exists $\alpha^{(2)} \ge \alpha$ such that $b^{(2)}(\cdot) \in RV_{1/\alpha^{(2)}}$ and $\nu^{(2)}$ satisfies the scaling property
\begin{equation}\label{scaling_e02}
\nu^{(2)} (c \cdot) = c^{-\alpha^{(2)}}\nu^{(2)}(\cdot), \hskip 1 cm c > 0.
\end{equation} 
HRV implies $\nu(\E^{(2)}) =0$, which is known as asymptotic
independence \cite[page 324]{resnickbook:2007}. We emphasize that the
model of hidden regular variation on $\E^{(2)}$ requires both
\eqref{stan_reg_var} and \eqref{hidden_reg_var} to be satisfied with
$b(t)/ b^{(2)}(t) \to \infty,$ and not only regular variation on
$\E^{(2)}$ as in \eqref{hidden_reg_var}. 

\subsubsection{{{The non-standard case}}}

Non-standard regular variation may hold when \eqref{stan_reg_var} fails, but 
\begin{equation}\label{non-stan_reg_var_onE}
tP\left[ \left( \frac{Z^j}{a^j(t)} , j=1, 2, \cdots,d \right) \in \cdot \right] \stackrel{v}{\rightarrow} \mu(\cdot) \quad \text{in $M_+(\E)$}
\end{equation}
for some scaling functions $a^1(\cdot), a^2(\cdot), \cdots,
a^d(\cdot)$ satisfying $a^i(t) \uparrow \infty,$ where $\mu$ is a
non-negative non-zero Radon measure on $\E$ \citep{dehaan:omey:1984,
  resnick:greenwood:1979}. We assume that  marginal convergences
satisfy
\begin{equation}\label{mar_in_non-stan_case}
tP\left[ \frac{Z^j}{a^j(t)} \in \cdot \right]
\stackrel{v}{\rightarrow} \nu_{\beta^j}(\cdot)\quad  \text{in  $M_+((0,\infty])$},
\end{equation}
 where $\nu_{\beta^j}((x, \infty]) = x^{-\beta^j}, \, \beta^j > 0, x > 0.$
Relation \eqref{non-stan_reg_var_onE} is equivalent to 
\begin{equation}\label{non-stan_reg_var_onE_stan}
tP\left[ \left( \frac{{a^j}^{\leftarrow}(Z^j)}{t} , j=1, 2, \cdots,d \right) \in \cdot \right] \stackrel{v}{\rightarrow} \nu(\cdot) \quad \text{in $M_+(\E)$},
\end{equation}
where $\nu$ satisfies the scaling property
$\nu (c \cdot) = c^{-1}\nu(\cdot), \,c > 0,
$
(\cite[page 277]{resnick:1987},
\cite{dehaan:ferreira:2006, heffernan:resnick:2005}). The
limit measures $\nu$ and $\mu$ are related:
\begin{equation}\label{mu_and_nu}
\mu([{\bf{0, x}}]^c) = \nu([{\bf{0,
    x^{{\boldsymbol{\beta}}}}}]^c),\quad {\bf{x}} \in \E.
\end{equation}
In this non-standard case,
the distribution of ${\bf{Z}}$ has hidden regular variation on
$\E^{(2)}$ 
 if, in addition to \eqref{non-stan_reg_var_onE_stan}, there exist a
 non-decreasing function $b^{(2)}(t) \uparrow \infty$, such that
 $t/b^{(2)}(t) \to \infty$, 
and a non-negative non-zero Radon measure $\nu^{(2)}$ on $\E^{(2)}$
satisfying
\begin{equation}\label{non-stan_hidden_reg_var_onE02}
tP\left[ \frac{\left( {a^j}^{\leftarrow}(Z^j), j =1,2, \cdots,
      d\right)}{b^{(2)}(t)} \in \cdot \right]
\stackrel{v}{\rightarrow} \nu^{(2)}(\cdot) \quad  \text{in $M_+(\E^{(2)})$}.
\end{equation} 
 Then, there exists $\alpha^{(2)} \ge 1$ such that $b^{(2)}(\cdot) \in RV_{1/\alpha^{(2)}}$ and $\nu^{(2)}$ satisfies the scaling property \eqref{scaling_e02}.

Note that \eqref{non-stan_reg_var_onE_stan} standardizes
\eqref{non-stan_reg_var_onE} with scaling function $b(t) = t$, and
the definition of hidden regular variation on $\E^{(2)}$
in \eqref{non-stan_hidden_reg_var_onE02}, is the most natural
substitute for \eqref{hidden_reg_var}. This reduces the non-standard case
to the standard one. Of course, we have to deal with the unknown nature of
the scaling functions
$a^j(\cdot), \hskip 0.2 cm j=1, 2, \cdots, d$.

\subsection{Hidden regular variation beyond $\E^{(2)}$}
For dimension $d > 2,$ it is possible to refine the model of HRV on $\E^{(2)}$ by defining hidden regular variation
on sub-cones of $\E^{(2)}.$ For $d > 2,$  even in the absence of
asymptotic independence, it is possible to
define HRV on sub-cones of $\E^{(2)}$ and the family of distributions
satisfying HRV on some sub-cone of
$\E^{(2)}$ is not a subfamily of distributions satisfying 
HRV on $\E^{(2)}$. 

\subsubsection{{{Motivation}}} 
A reason for seeking HRV on $\E^{(2)}$ is that
in the presence of asymptotic independence
when the limit measure $\nu$ puts zero mass on $\E^{(2)},$
 regular variation on $\E$ may fail to provide non-zero
 estimates of the probabilities of remote critical sets 
such as failure regions (reliability), overflow regions (hydrology),
and out-of-compliance regions (environmental protection).
Beyond $\E^{(2)}$, if  the limit measure $\nu^{(2)}$
in \eqref{hidden_reg_var} puts zero mass on $\E^{(3)}$ we would seek
to  refine HRV on
$\E^{(2)}.$

Consider the following thought experiment. Suppose, ${\bf{Z}} = ( Z^1, Z^2, \cdots, Z^d)$ represents
concentrations of a pollutant at $d$ locations and 
that {\bf{Z}} has a regularly varying distribution on $\E$ with
asymptotic independence. Assume
 we found HRV on $\E^{(2)}$ and the limiting measure $\nu^{(2)}$
in this case satisfies $\nu^{(2)}(\E^{(3)}) = 0$, so
HRV on $\E^{(2)},$ estimates $P( Z^{j_1} > x_1,
Z^{j_2} > x_2, \cdots Z^{j_l} > x_l)$  to be $0$ for
$3 \le l \le d$ and $1 \le j_1 < j_2 < \cdots < j_l \le d$. This resulting
estimate seems crude and we seek a remedy by looking for  finer structure of
 on the sub-cones $\E^{(3)} \supset \cdots \supset
\E^{(d)}$ in a sequential manner. 

Another context for HRV on $\E^{(3)}$ is as a  refinement of  
regular variation on $\E$ when asymptotic
independence is absent. Suppose, in the above thought experiment, {\bf{Z}} has a
regularly varying distribution on $\E$ with limit measure $\nu$ such
that $\nu(\E^{(2)}) > 0,$ but $\nu(\E^{(3)}) = 0.$ Asymptotic
independence is absent, but $P( Z^{j_1} > x_1, Z^{j_2} >
x_2, \cdots Z^{j_l} > x_l)$ is estimated to be $0$ for all $3 \le l
\le d$ and $1 \le j_1 < j_2 < \cdots < j_l \le d$. This suggests seeking
HRV on the sub-cones $\E^{(3)} \supset \cdots
\supset \E^{(d)}.$  

Examples in Section \ref{hidden_beyond_examples_section} show
 each modeling situation we considered in the above  thought
 experiments can happen.

We seek regular variation on the cones $\E  \supset \E^{(2)}
\supset \E^{(3)} \supset \cdots \supset \E^{(d)}$ in a sequential
manner. If  for some $1 \le j  \le d$, regular
variation is present on $\E^{(j)}$, as in
\eqref{reg_var_on_general_cone} and the limit measure $\nu^{(j)}$ puts
non-zero mass on $\E^{(l)}$, $j<l\leq d$, i.e. $\nu^{(j)}(\E^{(l)}) > 0$, then
there is no need to seek HRV on any of the cones
$\E^{(j+1)} \supset \cdots \supset \E^{(l)}$.
Recall the conventions that we replace  $\nu$, $\alpha$, $\E$
and $b(t)$ by
 $\nu^{(1)}$, $\alpha^{(1)}$, $\E^{(1)}$ and $b^{(1)}(t)$
 respectively.

Of course, there are other ways to nest sub-regions of $\E$ and seek
regular variation but our sequential search for regular variation on the cones
$\E^{(l)};\,l=2,\dots,d$ is one structured approach to the problem of
refined estimates.

\subsubsection{{{Formal definition of HRV on $\E^{(l)}$}} } \label{formal_definition_model}
The definition proceeds sequentially and begins with the standard case. Assume that ${\bf{Z}}$
satisfies  regular variation on $\E^{(1)}$ as in
\eqref{stan_reg_var} and that we have  regular
variation on a sub-cone $\E^{(j)} $ with scaling
function $b^{(j)}(t) \in RV_{1/\alpha^{(j)}}$ and limiting Radon measure $\nu^{(j)} \ne 0$.
For $j<l\leq d$, further assume that $\nu^{(j)}(\E^{(l-1)}) > 0$ and
$\nu^{(j)}(\E^{(l)}) = 0$.
 The cone $\E^{(j)}$ could be $\E^{(1)}$. The distribution
   of ${\bf{Z}}$ has hidden regular variation on 
 $\E^{(l)}$, if in addition to regular variation on $\E^{(j)}$, there
 is a  non-decreasing function $b^{(l)}(t) \uparrow \infty$ such that
 $b^{(j)}(t)/ b^{(l)}(t) \rightarrow \infty$, and a non-negative Radon
 measure $\nu^{(l)} \ne 0$ on $\E^{(l)}$ such that
\begin{equation}\label{reg_var_on_E0l}
tP\left[ \frac{{\bf{Z}}}{b^{(l)}(t)} \in \cdot \right] \stackrel{v}{\rightarrow} \nu^{(l)}(\cdot) \quad \text{in $M_+(\E^{(l)})$}.
\end{equation}
{}From \eqref{reg_var_on_E0l}, there exists $\alpha^{(l)} \ge \alpha^{(j)}$ such that $b^{(l)}(\cdot) \in RV_{1/\alpha^{(l)}}$ and $\nu^{(l)}$ has the scaling property 
\begin{equation}\label{scaling_e0l}
\nu^{(l)}(c \cdot) = c^{-\alpha^{(l)}}\nu^{(l)}( \cdot), \hskip 1 cm c > 0.
\end{equation}

For vague convergence on $\E^{(l)}$, it is important to
identify the compact sets of $\E^{(l)}$. From Proposition
6.1 of 
\cite[page 171]{resnickbook:2007}, the compact sets of $\E^{(l)}$ are
 closed sets contained in sets of the form $\{ {\bf{x}}
\in \E^{(1)} : x^{j_1} > w_1, x^{j_2} > w_2, \cdots, x^{j_l} > w_l \}$
for some $ 1 \le j_1 < j_2 < \cdots < j_l \le d$ and for some  $w_1,
w_2, \cdots, w_l > 0$. So, for all $\delta > 0$, $[x^{(l)}>\delta] $
is compact and 
\begin{equation*}
tP\left[ \frac{{\bf{Z}}}{b^{(j)}(t)} \in \{ {\bf{x}} \in \E^{(1)} : x^{(l)} > \delta \} \right] = tP\left[ \frac{Z^{(l)}}{b^{(l)}(t)}  > \frac{b^{(j)}(t)}{b^{(l)}(t)}\delta \right] \rightarrow 0,
\end{equation*}
since $b^{(j)}(t)/b^{(l)}(t) \to \infty$. Therefore, 
$\nu^{(j)}(\E^{(l)}) = 0$ is a necessary condition for HRV on $\E^{(l)}$.

 For defining HRV in the non-standard case, assume
\eqref{non-stan_reg_var_onE_stan} holds on $\E^{(1)}$ and the
rest of the definition
is the same with
 ${\bf{Z}}$ and $b^{(1)}(t)$  replaced by $({a^1}^{\leftarrow}(Z^1), {a^2}^{\leftarrow}(Z^2), \cdots, {a^d}^{\leftarrow}(Z^d))$ and $t$ respectively.

\begin{rem}
\rm{A few important remarks about hidden regular variation:

\begin{enumerate}[(i)]
\item The definition of hidden regular variation  leading to \eqref{reg_var_on_E0l}
is consistent with the definition of hidden regular variation on $\E^{(2)}.$ 

\item The definition of regular variation on $\E^{(1)}$ as in
  \eqref{stan_reg_var} or \eqref{non-stan_reg_var_onE_stan} requires
  that the limit measure $\nu^{(1)}$ has  non-zero
  marginals. When defining regular variation on $\E^{(l)},
  \,2 \le l \le d,$ as in \eqref{reg_var_on_E0l}, we
 do  not demand such a condition. For instance,  ${\bf{Z}} = (Z^1, Z^2, Z^3)$ being
  regularly varying on $\E^{(2)}$ does not imply that $(Z^1, Z^2)$ is
  regularly varying on $(0,  \infty]^2.$ See Example
  \ref{at_e02_but_not_at_e03}. 
  
\item Non-standard regular variation
    allows each component $Z^j$ of the random vector $\bf{Z}$ to be scaled by a
  possibly different scaling function $a^j(t)$ as in \eqref
  {non-stan_reg_var_onE}. An alternative approach to 
 defining regular
  variation on $\E^{(l)}$, $2 \le l \le d$,  would allow each component
  $Z^j$ of the random vector $\bf{Z}$ to be scaled by a possibly
  different scaling function $b^{(l),j}(t)$ and this would
    produce a more general
  model of HRV than the one we defined. However, we do not have a method of
  estimating the scaling functions $b^{(l),j}(n/k)$; see estimation of $b^{(l)}(n/k)$ using
  \eqref{estimatorb2n/k} and estimation of $a^j(b^{(2)}(n/k))$ in the
  non-standard case using
  \eqref{non-stan_estimatorbjn/k}. 
\end{enumerate}
}
\end{rem}

\subsection{Examples}\label{hidden_beyond_examples_section}
We give examples to exhibit subtleties. Example
\ref{no_at_e02_but_at_e03} shows a model in which
HRV is
not present in $\E^{(2)}$ but is present in $\E^{(3)}$. So,
non-existence of HRV on $\E^{(2)}$ does not
preclude  HRV on $\E^{(3)}.$
Example \ref{no_at_e02_but_at_e03} also shows that asymptotic
independence is not a necessary condition for the presence of HRV on $\E^{(3)}.$ In Examples
\ref{at_e02_but_not_at_e03} and \ref{e02_e03}, we  learn that HRV on $\E^{(2)}$ does not imply HRV on $\E^{(3)}.$ In Example \ref{e02_e03}, HRV on
$\E^{(3)}$ fails
 because $\nu^{(2)}(\E^{(3)}) >
0,$ but a different reason for failure holds in
 Example \ref{at_e02_but_not_at_e03}. 
In contrast,  Example \ref{iid_pareto} demonstrates that HRV could be present on each of the sub-cones
$\E^{(l)}$, $2 \le l \le d$. Also, Example \ref{e02_e03} shows that
asymptotic independence, unlike independence, does not imply $\nu^{(2)}( \E^{(3)}) = 0.$  
\begin{ex}\label{iid_pareto}
\rm{ An extension of Example 5.1 of \cite{maulik:resnick:2005}: Suppose, $Z^1, Z^2, \cdots, Z^d$ are iid Pareto($1$). Then, regular variation of ${\bf{Z}} = (Z^1, Z^2, \cdots, Z^d)$ is present on $\E$ with $\alpha =1$ and HRV is present on each of the sub-cones $\E^{(l)}$ with $\alpha^{(l)} = l$, for $2 \le l \le d$.
}
\end{ex}
\begin{ex}\label{no_at_e02_but_at_e03}
\rm{Suppose, $X$ and $Y$ are iid Pareto($1$) and ${\bf{Z}} =
  (X,2X,Y)$, so 
\begin{equation*}
tP\left[ \frac{{\bf{Z}}}{2t} \in \cdot \right] \stackrel{v}{\rightarrow} \nu(\cdot) \quad \text{in $M_+(\E)$},
\end{equation*}
and $\nu$ has all non-zero marginals.  However,  ${\bf{Z}}$ does not possess asymptotic
independence
 since $Z^1$ and $Z^2$ are not asymptotically independent
\citep[page 296, Proposition 5.27]{resnick:1987} and thus HRV cannot be present on $\E^{(2)}$ \citep[page 325, Property
9.1]{resnickbook:2007}. However,
\begin{align*}
\nu(\E^{(3)}) &= \lim_{w \to 0} \nu( \{ {\bf{x}} : x^1 \wedge x^2 \wedge x^3 > w \}) = \lim_{w \to 0} \lim_{t \to \infty} t P\left[ X > tw, 2X > tw, Y> tw \right] \\
&= \lim_{w \to 0} \lim_{t \to \infty} t P\left[ X > tw, Y> tw \right] = \lim_{w \to 0} \lim_{t \to \infty} t {(tw)}^{-1}{(tw)}^{-1} = 0.
\end{align*}
This suggests seeking HRV on $\E^{(3)}$ and
indeed this holds with $b^{(3)}(t) = \sqrt{t}$ since
 for  $w_1, w_2, w_3 > 0$,
\begin{align*}
\lim_{t \to \infty} t P\left[ X > \sqrt{t}w_1, \right. & \left. 2X > \sqrt{t}w_2, Y> \sqrt{t}w_3 \right] = \lim_{t \to \infty} t P\left[ X > \sqrt{t} \left(w_1 \vee \frac{w_2}{2}\right), Y> \sqrt{t}w_3 \right]\\
&= \lim_{t \to \infty} t {\left[\sqrt{t} \left(w_1 \vee \frac{w_2}{2}\right)\right]}^{-1}{(\sqrt{t}w_3)}^{-1} = \frac{1}{\left(w_1 \vee \frac{w_2}{2}\right)w_3}.
\end{align*}

So, for this example, 
\begin{enumerate}[(i)]
\item Regular variation holds on $\E^{(1)}$ and $\E^{(2)}$ (since $\nu(\E^{(2)}) \ne 0$), HRV holds on $\E^{(3)}, \hskip 0.1 cm  \nu(\E^{(1)}) = \nu(\E^{(2)}) = \infty, \hskip 0.1 cm \nu(\E^{(3)}) = 0.$ 
\item Asymptotic independence is absent but HRV on $\E^{(3)}$ is present. 
\end{enumerate}
}
\end{ex}

\begin{ex}\label{at_e02_but_not_at_e03} \rm{
Example 5.2 from \cite{maulik:resnick:2005}: Let, $X_1, X_2, X_3$ be
iid Pareto($1$) random variables. Also, let $B_1, B_2$ be iid
Bernoulli random variables independent of $(X_1,X_2,X_3)$ with
$
P[ B_i = 1] = P[ B_i = 0 ] = 1/2,\, i = 1, 2. 
$
Define ${\bf{Z}}=( B_2 X_1,(1- B_2)X_2, (1 - B_1)X_3)$.
From \cite{maulik:resnick:2005},
 HRV exists on the cone $\E^{(2)}$ with
 $\alpha^{(2)} =2$ and $\nu^{(2)}$ concentrates on $[x^1 > 0, x^3 > 0]
 \cup [x^2 > 0, x^3 > 0].$ Also,  $\nu^{(2)}(\{ {\bf{x}} :  x^1 > 0,
 x^2 > 0 \} ) = 0$. Since, $\E^{(3)}$ is a subset of $\{ {\bf{x}} :
 x^1 > 0, x^2 > 0 \}$, $\nu^{(2)}(\E^{(3)}) = 0$. However, HRV on $\E^{(3)}$ fails. The compact sets of $\E^{(3)}$
 are contained in sets of the form $\{ {\bf{x}} : x^1 > w^1, x^2 >
 w^2, x^3 > w^3 \}$ for $w^1, w^2, w^3 > 0$. Since either $Z^1$
 or $Z^2$ must be zero, for any increasing function $h(t) \uparrow
 \infty$, and l $w^1, w^2, w^3 > 0$, we have
\begin{equation*}
\lim_{t \to \infty} tP\left[ \frac{{\bf{Z}}}{h(t)} \in \{ {\bf{x}} : x^1 > w^1, x^2 > w^2, x^3 > w^3 \} \right] = 0.
\end{equation*}
 Hence, HRV holds on $\E^{(2)}$
with $ b^{(2)}(t) = \sqrt{t}$, but HRV on $\E^{(3)}$ fails.
}
\end{ex}

\begin{ex}\label{e02_e03}
\rm{ Let $X_1$, $X_2$ and $X_3$ be iid Pareto(1) random variables
 and define ${\bf{Z}} = \bigl(
(X_1)^2 \wedge (X_2)^2,
 (X_2)^2 \wedge (X_3)^2,
(X_1)^2 \wedge   (X_3)^2\bigr)$. First, note that 
\begin{align*}
tP\left[ \frac{{\bf{Z}}}{3t} \in \cdot \right] \stackrel{v}{\rightarrow} \nu(\cdot) \quad \text{in $M_+(\E)$}
\end{align*}
for some non-zero Radon measure $\nu$ on $\E$ with non-zero marginals. Also, 
\begin{align*}
tP\left[ \frac{{\bf{Z}}}{t^{2/3}} \in \cdot \right] \stackrel{v}{\rightarrow} \nu^{(2)}(\cdot) \quad\text{in $M_+(\E^{(2)})$}
\end{align*}
 for a non-zero Radon measure $\nu^{(2)}$ on $\E^{(2)}$. So, HRV exists on $\E^{(2)}$ and hence, the components of ${\bf{Z}}$ are asymptotically independent \citep[page 325, Property 9.1]{resnickbook:2007}. For $w_1, w_2, w_3 > 0$,
\begin{align*}
\lim_{t \to \infty}tP&\left[ \frac{{\bf{Z}}}{t^{2/3}} \in \{ {\bf{x}} : x^1 > w_1, x^2 > w_2, x^3 > w_3 \}\right] \\
& \qquad = \lim_{t \to \infty} tP\left[ X_1 > t^{1/3}{(w_1\vee w_3)}^{1/2}, X_2 > t^{1/3}{(w_1\vee w_2)}^{1/2}, X_3 > t^{1/3} {(w_2 \vee w_3)}^{1/2} \right] \\
&\qquad = \frac{1}{\sqrt{(w_1\vee w_3)\cdot (w_1\vee w_2)\cdot (w_2\vee w_3)}} = \nu^{(2)}( \{ {\bf{x}} : x^1 > w_1, x^2 > w_2, x^3 > w_3 \}).
\end{align*}
As $\{ {\bf{x}} : x^1 > w_1, x^2 > w_2, x^3 > w_3 \} \subset \E^{(3)}$, $\nu^{(2)}(\E^{(3)}) > 0$. So, for this example, 
\begin{enumerate}[(i)]
\item HRV exists on $\E^{(2)},$ not on
  $\E^{(3)},$ but ${\bf{Z}}$ is regularly varying on $\E^{(3)}$ in the sense of \eqref{reg_var_on_general_cone}. 
\item Asymptotic independence holds but  $\nu^{(2)}(\E^{(3)}) > 0.$
\end{enumerate}
}
\end{ex}

\section{Exploratory detection and estimation techniques} \label{extn_hidden_reg_var}
Existing exploratory detection techniques for HRV on
$\E^{(2)}$ are valid in two dimensions.
Our methods, applicable to any dimension, also allow for sequential
search for HRV
on $\E^{(l)}, \hskip 0.1 cm 2 \le l \le d.$ 

We find a coordinate system in which the limit measure
$\nu^{(l)}$ in \eqref{reg_var_on_E0l} is a product of  a probability
measure and a Pareto measure of the form $\nu_{\alpha^{(l)}}$ for some
$\alpha^{(l)} > 0.$ Thus we exploit the semi-parametric nature of 
$\nu^{(l)}$ for estimation and detection.

\subsection{ Decomposition of the limit measure $\nu^{(l)}$ }\label{hidden_ang_meas_sec}
By a suitable choice of  scaling function $b^{(l)}(t)$, we can
  and do make $\nu^{(l)}( \aleph^{(l)}) = 1$, where $\aleph^{(l)}  =
\{ {\bf{x}} \in \E^{(l)} : x^{(l)} \ge 1\}$. We decompose $\nu^{(l)}$
into a Pareto measure
$\nu_{\alpha^{(l)}}$ and 
 a probability measure $S^{(l)}$ on $\delta \aleph^{(l)}  = \{
{\bf{x}} \in \E^{(l)} : x^{(l)} = 1\}$ called the  hidden spectral or hidden angular measure.

\begin{prop}\label{stan_transform}
\rm{The distribution of the random vector ${\bf{Z}}$ has regular variation on $\E^{(l)},$ i.e. it satisfies \eqref{reg_var_on_general_cone} with $\mathbb{C} = \E^{(l)}$ and $\chi = \nu^{(l)}$, and the condition $\nu^{(l)}(\aleph^{(l)}) = 1$ holds iff 
\begin{equation}\label{stan_case_transform}
tP\left[ \left( \frac{Z^{(l)}}{b^{(l)}(t)}, \frac{{\bf{Z}}}{Z^{(l)}} \right) \in \cdot \right] \stackrel{v}{\rightarrow} \nu_{\alpha^{(l)}} \times S^{(l)}(\cdot)  \quad \text{in $M_+((0, \infty] \times \delta \aleph^{(l)})$},
\end{equation}
 where $Z^{(l)}$ is the $l$-th largest component of       
 ${\bf{Z}}$. The limit measure $\nu^{(l)}$ and the probability measure $S^{(l)}$ are related by 
  \begin{equation}\label{nu0ls0l}
 \nu^{(l)}(\{ {\bf{x}} \in \E^{(l)} : x^{(l)} \ge r, \frac{{\bf{x}}}{x^{(l)}} \in \Lambda \}) = r^{-\alpha^{(l)}} S^{(l)}(\Lambda),
 \end{equation}
which holds for all $r > 0$ and all Borel sets $\Lambda \subset \delta \aleph^{(l)}$.
 }
\end{prop}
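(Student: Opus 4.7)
The proof proposal is to reduce the statement to the standard polar-decomposition argument for regularly varying measures, adapted to the coordinate system $(x^{(l)}, \bx/x^{(l)})$ instead of $(\|\bx\|, \bx/\|\bx\|)$. The basic object is the map
\begin{equation*}
T : \E^{(l)} \to (0,\infty] \times \delta\aleph^{(l)}, \qquad T(\bx) = \bigl(x^{(l)},\, \bx/x^{(l)}\bigr),
\end{equation*}
with inverse $T^{-1}(r,\boldsymbol{a}) = r\boldsymbol{a}$. Since the $l$-th order statistic $\bx \mapsto x^{(l)}$ is continuous on $[0,\infty]^d$ and strictly positive on $\E^{(l)}$, the map $T$ is a homeomorphism, and it carries compact sets of $\E^{(l)}$ to compact sets of $(0,\infty]\times\delta\aleph^{(l)}$ and vice versa. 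Consequently vague convergence on $\E^{(l)}$ is equivalent to vague convergence of the pushforward measures on $(0,\infty]\times\delta\aleph^{(l)}$, so the convergence in \eqref{reg_var_on_E0l} is equivalent to
\begin{equation*}
tP\!\left[\left(\tfrac{Z^{(l)}}{b^{(l)}(t)},\tfrac{\bf Z}{Z^{(l)}}\right)\in\cdot\right]\stackrel{v}{\to}\tilde\nu^{(l)}(\cdot),\qquad \tilde\nu^{(l)}:=\nu^{(l)}\circ T^{-1}.
\end{equation*}
The problem then reduces to identifying $\tilde\nu^{(l)}$ as the product $\nu_{\alpha^{(l)}}\times S^{(l)}$.

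The next step is to extract the product structure from the scaling property. Define
\begin{equation*}
S^{(l)}(\Lambda) := \nu^{(l)}\bigl(\{\bx\in\E^{(l)}: x^{(l)}\ge 1,\ \bx/x^{(l)}\in\Lambda\}\bigr),\qquad \Lambda\subset\delta\aleph^{(l)}\text{ Borel}.
\end{equation*}
Taking $\Lambda=\delta\aleph^{(l)}$ gives $S^{(l)}(\delta\aleph^{(l)}) = \nu^{(l)}(\aleph^{(l)}) = 1$ by the normalization, so $S^{(l)}$ is a probability measure. For any $r>0$, the set $\{\bx: x^{(l)}\ge r,\ \bx/x^{(l)}\in\Lambda\}$ equals $r\cdot\{\bx: x^{(l)}\ge 1,\ \bx/x^{(l)}\in\Lambda\}$ because scaling by $r$ preserves the direction coordinate $\bx/x^{(l)}$. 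Applying the scaling identity \eqref{scaling_e0l} yields \eqref{nu0ls0l}:
\begin{equation*}
\nu^{(l)}\bigl(\{\bx\in\E^{(l)}: x^{(l)}\ge r,\ \bx/x^{(l)}\in\Lambda\}\bigr) = r^{-\alpha^{(l)}}S^{(l)}(\Lambda).
\end{equation*}
Since sets of the form $[r,\infty]\times\Lambda$ form a $\pi$-system generating the Borel $\sigma$-algebra of $(0,\infty]\times\delta\aleph^{(l)}$, and both $\tilde\nu^{(l)}$ and $\nu_{\alpha^{(l)}}\times S^{(l)}$ agree on this $\pi$-system (with $\nu_{\alpha^{(l)}}([r,\infty])=r^{-\alpha^{(l)}}$), they agree as Radon measures, so $\tilde\nu^{(l)}=\nu_{\alpha^{(l)}}\times S^{(l)}$.

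The converse direction is immediate: if \eqref{stan_case_transform} holds with right-hand side $\nu_{\alpha^{(l)}}\times S^{(l)}$ for some probability measure $S^{(l)}$ on $\delta\aleph^{(l)}$, then pushing forward by $T^{-1}$ yields \eqref{reg_var_on_general_cone} on $\E^{(l)}$ with $\nu^{(l)}=(\nu_{\alpha^{(l)}}\times S^{(l)})\circ T$, and evaluating on $\aleph^{(l)} = T^{-1}([1,\infty]\times\delta\aleph^{(l)})$ gives $\nu^{(l)}(\aleph^{(l)}) = \nu_{\alpha^{(l)}}([1,\infty])\cdot S^{(l)}(\delta\aleph^{(l)}) = 1$.

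The only delicate point I expect is verifying that $T$ genuinely maps compact sets to compact sets (and, dually, that the compactness structure on $(0,\infty]\times\delta\aleph^{(l)}$ is what is needed to run vague convergence back and forth). This hinges on the characterization of compact sets of $\E^{(l)}$ noted after \eqref{reg_var_on_E0l}: a closed $K\subset\E^{(l)}$ is compact iff it is contained in some $\{\bx: x^{(j_1)}>w_1,\dots,x^{(j_l)}>w_l\}$, which forces $x^{(l)}$ to be bounded away from $0$ on $K$; together with the fact that $\delta\aleph^{(l)}$ is itself compact, this ensures the bijection is proper in both directions and the pushforward of Radon measures remains Radon. Once that point is handled, the rest is the routine scaling/product-measure identification above.
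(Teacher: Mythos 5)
There is a genuine gap, and it sits exactly at the point you flag as ``the only delicate point'' and then wave through. The map $T(\bx)=(x^{(l)},\bx/x^{(l)})$ is \emph{not} a homeomorphism from $\E^{(l)}$ onto $(0,\infty]\times\delta\aleph^{(l)}$: the cone $\E^{(l)}\subset[0,\infty]^d$ contains points with $x^{(l)}=\infty$ (the compact set $\aleph^{(l)}=\{x^{(l)}\ge 1\}$ contains them), and at such points the ratio $\bx/x^{(l)}$ is undefined ($\infty/\infty$ in the top $l$ coordinates). So $T$ is only a bijection between $\E_{l\setminus\infty}:=\E^{(l)}\setminus[x^{(l)}=\infty]$ and $(0,\infty)\times\delta\aleph^{(l)}$, and your one-line reduction ``vague convergence on $\E^{(l)}$ is equivalent to vague convergence of the pushforwards on $(0,\infty]\times\delta\aleph^{(l)}$'' does not follow. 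The relevant compact sets of the target space, such as $[1,\infty]\times\delta\aleph^{(l)}$, include the fiber at $r=\infty$, which has no preimage under $T$, so properness of the bijection between the \emph{full} spaces cannot be invoked.

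What is actually needed, and what the paper's proof supplies, is a two-step argument: first establish the equivalence of vague convergence for the measures \emph{restricted} to $\E_{l\setminus\infty}$ and to $(0,\infty)\times\delta\aleph^{(l)}$ (where $T$ genuinely is a proper homeomorphism, via the characterization of compacts $\{r\le x^{(l)}\le s\}$), and then \emph{extend} the convergence to the full spaces by showing no mass escapes to $r=\infty$. That extension is the real content: one bounds
$\lim_{s\to\infty}\limsup_{t\to\infty} tP\bigl[Z^{(l)}/b^{(l)}(t)>s\bigr]=\lim_{s\to\infty}s^{-\alpha^{(l)}}\nu^{(l)}(\{x^{(l)}>1\})=0$
using the scaling property, and a matching estimate in the converse direction, and then sandwiches $\liminf$ and $\limsup$ over sets of the form $(r,\infty]\times\Lambda$ (respectively relatively compact $A\subset\E^{(l)}$). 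Your derivation of \eqref{nu0ls0l} from the scaling identity and the $\pi$-system identification of the product measure are fine as far as they go, and the converse direction's computation of $\nu^{(l)}(\aleph^{(l)})=1$ is correct once the convergence itself is justified; but without the restriction-then-extension argument the central equivalence between \eqref{reg_var_on_E0l} and \eqref{stan_case_transform} is not proved.
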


\begin{proof} See Appendix \ref{proof_stan_transform}.
\end{proof}

\begin{rem}
\rm{ Proposition \ref{stan_transform} only assumes regular variation
  on $\E^{(l)}$, $\alpha^{(l)} > 0,$ whereas  hidden
  regular variation on $\E^{(l)}$ also requires \eqref{stan_reg_var}
  to hold and $b(t)/b^{(l)}(t) \to \infty$. 

Also, the convergence in \eqref{stan_case_transform}  is equivalent to 
\begin{enumerate}[(i)]
\item $Z^{(l)}$ having regularly varying tail with index $\alpha^{(l)} > 0$ 
and
\item as $t \to \infty,$ 
\begin{equation*}
P\left[ \frac{{\bf{Z}}}{Z^{(l)}} \in \cdot \hskip 0.1 cm {\Big{|}} Z^{(l)} > t \right] \Rightarrow S^{(l)}(\cdot) \quad \text{on $\delta \aleph^{(l)}$}.
\end{equation*}

\end{enumerate}
}
\end{rem}

\begin{rem}
\rm{The  polar coordinate transformation ${\bf{x}}
  \mapsto \left( ||{\bf{x}}||, {{\bf{x}}}/{||{\bf{x}}||} \right)$
usually used for regular variation
  introduces a non-compact unit sphere $\{ {\bf{x}} \in \E^{(l)} :
  ||{\bf{x}}|| = 1 \}.$ This defect is fixed by using  $\delta
  \aleph^{(l)}$ instead.} 
\end{rem}

Example \ref{simulation_e0l}
uses Proposition \ref{stan_transform} to construct random variables having regular variation on the cone $\E^{(l)}$ with the limit measure $\nu^{(l)}$.

\begin{ex}\label{simulation_e0l}
\rm{Suppose, $(R, {\bf{\Theta}})$ is an independent pair of random variables on $(0, \infty] \times \delta \aleph^{(l)}$ with
$$ P[ R > r] = r^{-\alpha^{(l)}}, \hskip 0.2 cm r >1, \hskip 1 cm P[ {\bf{\Theta}} \in \cdot] = S^{(l)}(\cdot). $$
Then,
\begin{equation*}
tP\left[ \frac{R}{t^{1/\alpha^{(l)}}} > r, {\bf{\Theta}} \in \Lambda \right] = t{\left(t^{1/\alpha^{(l)}}r\right)}^{-\alpha^{(l)}} S^{(l)}(\Lambda) = r^{-\alpha^{(l)}} S^{(l)}(\Lambda).
\end{equation*}
 By Proposition \ref{stan_transform}, the distribution of ${\bf{Z}} = R{\bf{\Theta}}$ 
is regularly varying 
 on $\E^{(l)}$ and satisfies \eqref{reg_var_on_E0l} with $\nu^{(l)}(\aleph^{(l)}) = 1$. This, however, does not guarantee regular variation on $\E$. Also, unless ${\bf{\Theta}}$ has a support contained in $\{ {\bf{\theta}} \in \delta \aleph^{(l)} : \theta^{(1)} < \infty \}$, the random variable ${\bf{Z}}$ might not be real-valued.
 }
\end{ex}

\subsection{Detection of HRV on $\E^{(l)}$ and estimation of $\nu^{(l)}$} \label{detect}
Is the model of hidden regular variation  on $\E^{(l)}$ appropriate
for a given  data set?
 If so, how do we  estimate the limit measure $\nu^{(l)}$
and tail probabilities of the
 form $P[ Z^{i_1} > z_1, Z^{i_2} > z_2, \cdots Z^{i_l} > z_l ]$ for
  $1 \le i_1 < i_2 \cdots < i_l \le d.$ 
We consider the standard and  non-standard cases and
assume $\nu^{(l)}(\aleph^{(l)}) = 1$. 

\subsubsection{{{The standard case}}}\label{E0l_stan_est_proc_sec}
Suppose, ${\bf{Z}}_1, {\bf{Z}}_2, \cdots, {\bf{Z}}_n$ are iid random
vectors in ${[0,\infty)}^d$ whose common distribution
 satisfies regular variation
on $\E$ as in \eqref{stan_reg_var}. We want to detect if HRV
is present in $\E^{(l)}$ and this requires prior detection of
regular variation on a bigger sub-cone $\E^{(j)} \supset \E^{(l)}$
with the limit measure $\nu^{(j)}$ having
the property $\nu^{(j)}(\E^{(l-1)}) > 0$ and $\nu^{(j)}(\E^{(l)}) =
0.$ 
Recall $\E^{(j)}$ could  be $\E^{(1)}$. 

Here is a method for verifying that
 $\nu^{(j)}(\E^{(l-1)}) > 0$ and $\nu^{(j)}(\E^{(l)}) =
0.$ For each $p > j,$ define a transformation $M^{(p)} : \delta
\aleph^{(j)} \mapsto [0, 1]$ as ${\bf{x}} \mapsto x^{(p)}$. If  $\nu^{(j)}(\E^{(l-1)}) > 0$ and $\nu^{(j)}(\E^{(l)}) = 
0,$ then the probability measure $S^{(j)} \circ {M^{(l)}}^{-1}$
is degenerate at zero but  $S^{(j)} \circ
{M^{(l -1)}}^{-1}$ is not; see Remark
\ref{rem_lemma_2}. As will be discussed later, we can construct an
atomic measure $\hat S^{(j)}$, which consistently estimates
$S^{(j)}$. Using the atoms of $\hat S^{(j)} \circ {M^{(l-1)}}^{-1},$
we plot a kernel density estimate of the density of $S^{(j)} \circ
{M^{(l-1)}}^{-1}.$ If the plotted density appears to concentrate
around zero, we believe that $\nu^{(j)}(\E^{(l-1)}) = 
0.$ Otherwise, we assume that $\nu^{(j)}(\E^{(l-1)}) > 0.$ Then, using
similar methods, we proceed to check whether $\nu^{(j)}(\E^{(l)}) =
0.$ 

Once convinced
 that $\nu^{(j)}(\E^{(l-1)}) > 0$ and $\nu^{(j)}(\E^{(l)}) = 0$, we
seek  HRV on
$\E^{(l)}$. 
Using Proposition \ref{stan_transform}, HRV implies 
\begin{equation}\label{l_component_reg_var_eqn}
tP\left[ {Z^{(l)}}/{b^{(l)}(t)} \in \cdot \right] 
\stackrel{v}{\rightarrow} \nu_{\alpha^{(l)}}(\cdot) \quad \text{in $M_+((0, \infty])$}.
\end{equation}
So, we apply Hill, QQ and Pickands plots to the iid data $\{
Z^{(l)}_i , i = 1, 2, \cdots, n \}$ and attempt to infer that  $Z^{(l)}$
has a regularly varying distribution \citep[Chapter 4]{resnickbook:2007}.  

If convinced that HRV is present, we estimate the limit measure $\nu^{(l)}$.
Define the set 
$$\E_{l\setminus \infty}
 = \E^{(l)} \backslash \cup_{1 \le j_1 < j_2 <
  \cdots < j_l \le d} [ x^{j_1} = \infty, x^{j_2} = \infty, \cdots,
x^{j_l} = \infty] =\E^{(l)}  \setminus [x^{(l)}=\infty]
$$
 and the transformation $Q^{(l)} : \E_{l\setminus \infty} \mapsto (0,
\infty) \times \delta \aleph^{(l)}$ 
as
\begin{equation}\label{define_T_e0l}
Q^{(l)}({\bf{x}}) = \left( x^{(l)}, {{\bf{x}}}/{x^{(l)}} \right).
\end{equation}
 From \eqref{nu0ls0l} and the fact that $Q^{(l)}$ is one-one, we get for any Borel set $A \subset \E^{(l)}$, 
\begin{equation*}
\nu^{(l)}(A) = \nu^{(l)}(A \cap \E_{l\setminus \infty} ) =
\nu_{\alpha^{(l)}} \times S^{(l)}(Q^{(l)}(A \cap \E_{l\setminus
  \infty} )).
\end{equation*}
So, estimating $\alpha^{(l)}$ and the hidden spectral measure
$S^{(l)}$ is equivalent to  estimating  $ \nu^{(l)}$. 

We estimate $\alpha^{(l)}$ using one dimensional methods such as
 the Hill, QQ or  Pickands estimator applied to the iid data $\{
 Z^{(l)}_i , i = 1, 2, \cdots, n \}$. 
An estimator of $S^{(l)}$ can be constructed using standard
ideas as follows \cite{heffernan:resnick:2005}. Suppose, $k(n) \to \infty$, ${k(n)}/{n} \to 0$, as $n \to \infty$. Using Theorem 5.3(ii) of \cite[page 139]{resnickbook:2007}, we get
\begin{equation}\label{almost_spec}
\frac{1}{k} \sum_{i =1}^n \epsilon_{\left( Z^{(l)}_i/b^{(l)}(\frac{n}{k}), {\bf{Z}}_i/ Z^{(l)}_i \right) } \Rightarrow \nu_{\alpha^{(l)}}\times S^{(l)}
\end{equation}
on $M_+((0, \infty] \times \delta \aleph^{(l)} )$.
 Choosing $\left([1, \infty] \times \cdot\right)$ as the set in
 \eqref{almost_spec}, gives an estimator of $S^{(l)}$,
 but this estimator uses the unknown $b^{(l)}(n/k)$, 
which must be replaced by a statistic.

Order the observations $\{ Z^{(l)}_i, i= 1, 2, \cdots, n \}$ as
$Z^{(l)}_{(1)} \ge Z^{(l)}_{(2)} \ge \cdots \ge Z^{(l)}_{(n)}$ which
are order statistics from a sample drawn from a regularly varying distribution. Using \eqref{l_component_reg_var_eqn}  and Theorem 4.2 of \cite[page 81]{resnickbook:2007}, we get
\begin{equation}\label{estimatorb2n/k}
\frac{Z^{(l)}_{(k)}}{ b^{(l)}(n/k)} \stackrel{P}{\rightarrow} 1.
\end{equation}
Then \eqref{almost_spec} and \eqref{estimatorb2n/k} yield
\begin{equation}\label{joint_almostk_estimatorb2n/k}
\left( \frac{1}{k} \sum_{i =1}^n \epsilon_{\left(
      Z^{(l)}_i/b^{(l)}({n}/{k}), {\bf{Z}}_i/ Z^{(l)}_i \right) }
, Z^{(l)}_{(k)}/
  b^{(l)}\left({n}/{k}\right) \right) \Rightarrow
\left(\nu_{\alpha^{(l)}} \times S^{(l)}, 1 \right) 
\end{equation}
on $M_+\bigl((0,\infty] \times \delta \aleph^{(l)} \bigr) \times (0,
\infty]$. Applying the almost surely continuous map  
\begin{equation*}
\left( \nu \times S , x\right) \mapsto  \nu \times S ([x, \infty] \times \cdot )
\end{equation*}
to \eqref{joint_almostk_estimatorb2n/k},  the continuous
mapping theorem \cite[page 21]{billingsley:1999} gives
\begin{align}\label{stan_spec_est_conv_eqn}
 \frac{1}{k} \sum_{i =1}^n \epsilon_{\left( Z^{(l)}_i/b^{(l)}({n}/{k}), {\bf{Z}}_i/ Z^{(l)}_i \right) }&( [Z^{(l)}_{(k)}/ b^{(l)}\left({n}/{k}\right), \infty]  \times \cdot)  \nonumber \\
&=  \frac{1}{k} \sum_{i =1}^n \epsilon_{\left( Z^{(l)}_i/Z^{(l)}_{(k)}, {\bf{Z}}_i/ Z^{(l)}_i \right) }( [1, \infty]  \times \cdot ) \Rightarrow \nu_{\alpha^{(l)}}([1, \infty])S^{(l)}(\cdot) = S^{(l)}(\cdot)
\end{align}
 on $\delta \aleph^{(l)}$. Thus, a consistent estimator
 for $S^{(l)}(\cdot)$ is
$\frac{1}{k} \sum_{i =1}^n \epsilon_{\left( Z^{(l)}_i/Z^{(l)}_{(k)},
    \hskip 0.1 cm {\bf{Z}}_i/ Z^{(l)}_i \right) }( [1, \infty]  \times
\cdot )$ or
\begin{equation}\label{stan_spec_est}
\hat S^{(l)}(\cdot) := \frac{
 \sum_{i =1}^n \epsilon_{\left( Z^{(l)}_i/Z^{(l)}_{(k)}, \hskip 0.1 cm
     {\bf{Z}}_i/ Z^{(l)}_i \right) }( [1, \infty]  \times \cdot )
}
{\sum_{i =1}^n \epsilon_{ Z^{(l)}_i/Z^{(l)}_{(k)} }([1,\infty])
}.
\end{equation}

\subsubsection{{{The non-standard case}}}\label{detect_nonstan_e02}
Suppose, ${\bf{Z}}_1, {\bf{Z}}_2, \ldots {\bf{Z}}_n$ are iid random
vectors in ${[0,\infty)}^d$ such that their common distribution
satisfies non-standard regular variation
\eqref{non-stan_reg_var_onE_stan}
on $\E$. We seek HRV
on $\E^{(l)}$. HRV is defined
sequentially, so if HRV on $\E^{(l)}$ exists,  
\begin{enumerate}[(i)] 
\item either \eqref{non-stan_reg_var_onE_stan} holds, 
and $\left( {a^i}^{\leftarrow}(Z^i),\,i =1,2,
    \cdots, d\right)$ is standard regularly varying on $\E^{(j)},
  \E^{(l)}$ with limit measures $\nu^{(j)}, \nu^{(l)}$ and scaling
  functions $b^{(j)}(t), b^{(l)}(t)$ for $1\leq j<l\leq d$ and $\nu^{(j)}(\E^{(l-1)}) > 0$ and
  $\nu^{(j)}(\E^{(l)}) = 0,$

\item or \eqref{non-stan_reg_var_onE_stan} holds, \eqref{reg_var_on_E0l}
  holds with $\bf{Z}$ replaced by
$\left( {a^i}^{\leftarrow}(Z^i),\,i =1,2, \cdots, d\right),$ 
and  $\nu(\E^{(l-1)}) > 0$ and $\nu(\E^{(l)}) = 0.$ 
\end{enumerate}
In each case, \eqref{non-stan_reg_var_onE_stan} holds,
\eqref{reg_var_on_E0l} holds 
for  $\left( {a^i}^{\leftarrow}(Z^i),\, i =1,2, \cdots, d\right)$ and $t/b^{(l)}(t) \to \infty$.

Recall the definitions of antiranks $\{ r_i^j, i=1,2, \cdots, n,
j=1,2, \cdots, d \},$ $l$-th largest components of $\{ 1/r_i^j, j=1,2,
\cdots, d \}$ denoted $m^{(l)}_i$ for each $i,$ and order statistics
of $\{ m^{(l)}_i, i =1, 2, \cdots, n\},$ denoted $\{ m^{(l)}_{(p)},
p=1, 2, \cdots, n\}.$ Here is a method to detect HRV on $\E^{(l)}$
in the non-standard case.

\begin{prop}\label{rank_conv}
\rm{ Assume that ${\bf{Z}}_1, {\bf{Z}}_2, \ldots {\bf{Z}}_n$ are iid random vectors from a distribution on $[0, \infty)^d$ that satifies both regular variation on $\E$ and HRV on $\E^{(l)}$, so that \eqref{non-stan_reg_var_onE_stan} holds and \eqref{reg_var_on_E0l} holds with ${\bf{Z}}$ replaced by $\left( {a^j}^{\leftarrow}(Z^j), j =1,2, \cdots, d\right)$. We assume that $\nu^{(l)}( \aleph^{(l)}) = 1$. Then, we have on $M_+(\E^{(l)})$,
\begin{equation}\label{est_nu0_mostgeneral_e0l}
\hat \nu^{(l)} := \frac{1}{k}\sum_{i =1}^n \epsilon_{\left( (1/r_i^j)/ m^{(l)}_{(k)}, 1 \le j \le d \right)} \Rightarrow \nu^{(l)} \quad \text{on $M_+(\E^{(l)})$}.
\end{equation}
}
\end{prop}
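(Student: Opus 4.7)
The plan is to reduce to the already-established standard case via the substitution $V_i^j := {a^j}^{\leftarrow}(Z_i^j)$, and then show that the observable rank quantity $n/r_i^j$ can replace the unobservable $V_i^j$ without affecting the limit. By the non-standard regular variation assumption \eqref{non-stan_reg_var_onE_stan}, the iid vectors ${\bf V}_i := ({a^j}^{\leftarrow}(Z_i^j),\, j=1,\dots,d)$ are standard regularly varying on $\E$ with scaling function $t$, and the HRV assumption \eqref{reg_var_on_E0l} transfers: they have HRV on $\E^{(l)}$ with scaling $b^{(l)}(t)$ and limit measure $\nu^{(l)}$, normalized to $\nu^{(l)}(\aleph^{(l)})=1$.

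First I would establish, parallel to \eqref{stan_spec_est_conv_eqn} but at the level of the full point measure rather than only its angular projection, that in the standard case
\begin{equation*}
\frac{1}{k}\sum_{i=1}^n \epsilon_{{\bf V}_i/V^{(l)}_{(k)}} \Rightarrow \nu^{(l)} \qquad\text{on } M_+(\E^{(l)}),
\end{equation*}
where $V^{(l)}_{(k)}$ is the $k$-th order statistic of $\{V_i^{(l)}\}_i$. This follows from the same ingredients as the derivation of \eqref{stan_spec_est_conv_eqn}: the joint weak convergence \eqref{joint_almostk_estimatorb2n/k} together with $V^{(l)}_{(k)}/b^{(l)}(n/k)\stackrel{P}{\to}1$, the almost surely continuous map $(m,x)\mapsto m([x,\infty]\times\cdot)$, and reconstruction of $\nu^{(l)}$ from its polar decomposition $\nu_{\alpha^{(l)}}\times S^{(l)}$ through the inverse of $Q^{(l)}$ from Proposition~\ref{stan_transform}.

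Next I would replace $V_i^j$ by the observable $n/r_i^j$. Since \eqref{mar_in_non-stan_case} yields ${a^j}^{\leftarrow}(z)\sim 1/\bar F^j(z)$ as $z\to\infty$, and since $r_i^j/n$ is exactly the empirical survival function of the $j$-th coordinate evaluated at $Z_i^j$, a multivariate Vervaat-type argument (coordinatewise versions of the $\E^{(2)}$ argument in \cite{heffernan:resnick:2005}) delivers
\begin{equation*}
\max_{1\le j\le d}\ \max_{i:\, V_i^{(l)} \ge V^{(l)}_{(k)}} \left|\frac{n/r_i^j}{V_i^j}-1\right| \stackrel{P}{\longrightarrow} 0,
\end{equation*}
and in particular $n\, m^{(l)}_{(k)}/V^{(l)}_{(k)}\stackrel{P}{\to}1$ because ${\bf x}\mapsto x^{(l)}$ is continuous on $\E^{(l)}$. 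Since the $n$ cancels in $(1/r_i^j)/m^{(l)}_{(k)} = (n/r_i^j)/(n\, m^{(l)}_{(k)})$, the atoms of $\hat\nu^{(l)}$ differ from those of the empirical measure in the displayed equation above by a coordinatewise multiplicative factor tending uniformly to $1$ on the indices that matter. Testing against $f\in C_K^+(\E^{(l)})$, whose support is bounded away from $\{x^{(l)}=0\}$, and invoking uniform continuity of $f$ on its compact support, transfers the convergence to $\hat\nu^{(l)}\Rightarrow \nu^{(l)}$.

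The main obstacle is the multivariate rank-to-quantile replacement. The one-dimensional Vervaat lemma is classical, but here it must hold jointly across all $d$ coordinates and uniformly over the data-dependent index set $\{i:V_i^{(l)}\ge V^{(l)}_{(k)}\}$. Continuity of the $l$-th largest coordinate is essential and is available precisely on $\E^{(l)}$, which explains why the convergence is framed on this cone rather than on $\E$. One must also verify that the joint tail approximation of marginal empirical distribution functions by their theoretical counterparts is genuinely uniform in the far tails, not merely pointwise, which is where the assumption $\nu^{(l)}(\aleph^{(l)})=1$ (combined with $k\to\infty$, $k/n\to 0$) does the work of localizing the argument to the region where the Vervaat estimates are sharp.
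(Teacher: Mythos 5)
The paper does not actually supply a proof here: it observes that for $l=d$ the statement coincides with Proposition 2 of Heffernan--Resnick and declares the case $2\le l<d$ ``similar and omitted.'' Your sketch is, in substance, exactly the argument behind that cited proposition extended to general $l$ --- reduce to the standard case via $V_i^j={a^j}^{\leftarrow}(Z_i^j)$, prove consistency of the tail empirical measure with the random normalization $V^{(l)}_{(k)}$, and then trade $V_i^j$ for $n/r_i^j$ by a tail-empirical-quantile (Vervaat) argument --- so the architecture matches what the paper intends.

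One step is overstated, however. The displayed claim
\begin{equation*}
\max_{1\le j\le d}\ \max_{i:\, V_i^{(l)} \ge V^{(l)}_{(k)}} \Bigl|\frac{n/r_i^j}{V_i^j}-1\Bigr| \stackrel{P}{\longrightarrow} 0
\end{equation*}
is false in general for $l<d$: membership in $\{i: V_i^{(l)}\ge V^{(l)}_{(k)}\}$ forces only $l$ of the $d$ coordinates of ${\bf V}_i$ to be extreme, and for a non-extreme coordinate $j$ the quantities $n/r_i^j$ and $V_i^j={a^j}^{\leftarrow}(Z_i^j)$ are both of moderate size but their ratio need not approach $1$ --- the approximation of the empirical by the theoretical marginal quantile is multiplicatively sharp only in the far tail, and the uniform (DKW-type) bound on $\bar F^j_n-\bar F^j$ does not control the ratio at intermediate ranks. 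What you actually need, and what suffices when testing against $f\in C_K^+(\E^{(l)})$ because $[0,\infty]^d$ carries the product of compactified topologies, is coordinatewise closeness of the \emph{normalized} atoms $(1/r_i^j)/m^{(l)}_{(k)}$ and $V_i^j/V^{(l)}_{(k)}$ in $[0,\infty]$: where the normalized coordinate stays in a compact subset of $(0,\infty)$ one necessarily has $Z_i^j\to\infty$ and the Vervaat estimate does give a multiplicative factor tending to $1$; where it escapes to $0$ or $\infty$, both versions do so together by monotonicity and no ratio control is required. With the claim weakened in this way the rest of your argument goes through, so this is a repairable imprecision in a key technical step rather than a wrong approach.
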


\begin{proof}
For $l = d,$ the statement is the same as Proposition 2 of
\cite{heffernan:resnick:2005}, except that instead of defining HRV on $\E^{(2)},$ we have assumed HRV on $\E^{(d)}.$  The proof of the case $2 \le l < d$
is  similar to the case for $l=d$ and is omitted.
\end{proof}
\begin{rem}
\rm{In the case, $l=2 < d,$ the only improvement of Proposition
  \ref{rank_conv}, over Proposition 2 of \cite{heffernan:resnick:2005}
  is that here we assume $\nu^{(2)}( \aleph^{(2)}) = 1$ instead of
  assuming $\nu^{(2)}(\{ x \in \E^{(2)} : \wedge_{j=1}^d x^j \ge 1\})
  = 1$. We claim that if HRV on $\E^{(2)}$ is present, the
  assumption $\nu^{(2)}(\aleph^{(2)}) = 1$ could always be achieved by
  a suitable choice of $b^{(2)}(t)$, but if $d > 2$, this may not be
  true for the assumption of $\nu^{(2)}(\{ x \in \E^{(2)} :
  \wedge_{j=1}^d x^j \ge 1\}) = 1$, as claimed in
  \cite{heffernan:resnick:2005}.  See
  Example \ref{iid_pareto} for an illustration.
}
\end{rem}

Proposition \ref{rank_conv} gives us a consistent estimator of
$\nu^{(l)}$, without using the semi-parametric structure of $\nu^{(l)}$
resulting from  \eqref{scaling_e0l} and we now exploit this structure.
In the non-standard case, decomposition of $\nu^{(l)}$ is achieved
as in Proposition \ref{stan_transform}, only the role
of ${\bf{Z}}$ is played by $\left( {a^j}^{\leftarrow}(Z^j), j =1,2,
  \cdots, d\right)$. The limit measure $\nu^{(l)}$ of
\eqref{reg_var_on_E0l} is related to the hidden spectral measure
$S^{(l)}$ through \eqref{nu0ls0l}, which acts as the
definition of the hidden spectral measure $S^{(l)}$ in the non-standard
case.

\begin{prop}\label{spec_trans_mostgeneral}
\rm{The following two statements are equivalent:}
\begin{enumerate}
\rm{ \item The estimator of $\nu^{(l)}$ based on ranks is consistent
  as $k(n) \to \infty$, $k(n)/n \to 0$, and $n \to \infty$; i.e.}
\begin{equation}\label{non_stan_rank_conv}
\hat \nu^{(l)} := \frac{1}{k}\sum_{i =1}^n \epsilon_{\left( (1/r_i^j)/ m^{(l)}_{(k)}, 1 \le j \le d \right)} \Rightarrow \nu^{(l)} \quad \text{on $M_+(\E^{(l)})$}.
\end{equation}

\item \rm{The estimator of $\nu_{\alpha^{(l)}} \times S^{(l)}$ based
    on ranks is consistent as $k(n) \to \infty$, $k(n)/n \to 0$, and
    $n \to \infty$; i.e.}
\begin{equation}\label{non_stan_rank_trans_conv}
 \frac{1}{k}\sum_{i =1}^n \epsilon_{\left( m^{(l)}_i/ m^{(l)}_{(k)}, \hskip 0.1 cm \left( (1/r_i^j)/ m^{(l)}_i, \hskip 0.1cm 1 \le j \le d \right) \right)} \Rightarrow \nu_{\alpha^{(l)}} \times S^{(l)} \quad \text{on $M_+((0, \infty] \times \delta \aleph^{(l)})$}.
\end{equation}
\end{enumerate}
\end{prop}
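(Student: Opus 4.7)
The plan is to exhibit the empirical measure in \eqref{non_stan_rank_trans_conv} and its candidate limit $\nu_{\alpha^{(l)}}\times S^{(l)}$ as push-forwards under the map $Q^{(l)}$ of \eqref{define_T_e0l}, applied to the empirical measure $\hat\nu^{(l)}$ in \eqref{non_stan_rank_conv} and to $\nu^{(l)}$ respectively, and then to transfer vague convergence through the continuous mapping theorem, used in both directions.

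First I would verify the pointwise identity. Write $\bx_i := ((1/r_i^j)/m^{(l)}_{(k)},\, 1\le j\le d)$. By the very definition of $m^{(l)}_i$ as the $l$-th largest component of $(1/r_i^j,\, 1\le j\le d)$, the $l$-th largest coordinate of $\bx_i$ equals $m^{(l)}_i/m^{(l)}_{(k)}$, and dividing $\bx_i$ coordinatewise by this scalar returns $((1/r_i^j)/m^{(l)}_i,\, 1\le j\le d)\in\delta\aleph^{(l)}$. Hence
\[
Q^{(l)}(\bx_i) = \left(\frac{m^{(l)}_i}{m^{(l)}_{(k)}},\; \left(\frac{1/r_i^j}{m^{(l)}_i},\; 1\le j\le d\right)\right),
\]
so the empirical measure in \eqref{non_stan_rank_trans_conv} is precisely $\hat\nu^{(l)}\circ {Q^{(l)}}^{-1}$. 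On the limit side, relation \eqref{nu0ls0l} of Proposition \ref{stan_transform} gives $\nu^{(l)}\circ {Q^{(l)}}^{-1}=\nu_{\alpha^{(l)}}\times S^{(l)}$.

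Second, the restriction of $Q^{(l)}$ to $\E_{l\setminus\infty}$ and its inverse $(r,{\boldsymbol\theta})\mapsto r\,{\boldsymbol\theta}$ are continuous; a set $K\subset(0,\infty]\times\delta\aleph^{(l)}$ is relatively compact iff its first coordinate is bounded away from $0$, and under ${Q^{(l)}}^{-1}$ this corresponds to $\{x^{(l)}\ge\delta\}$ for some $\delta>0$, which is a relatively compact subset of $\E^{(l)}$. Hence $Q^{(l)}$ is a proper homeomorphism and the continuous mapping theorem for vague convergence yields each implication from the other.

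The main obstacle, though a technical one, is handling the set $[x^{(l)}=\infty]$ on which $Q^{(l)}$ is not defined. For the empirical side, the bound $r_i^j\ge 1$ ensures every atom of $\hat\nu^{(l)}$ has only finite coordinates; for the limit side, the scaling relation \eqref{scaling_e0l} forces $\nu^{(l)}([x^{(l)}=\infty])=0$. Once one restricts attention to $\E_{l\setminus\infty}$ from the outset, no further subtleties arise and the argument reduces to a direct continuous-mapping computation.
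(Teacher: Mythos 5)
Your skeleton matches the paper's: view the two empirical measures as push-forwards of one another under the bijection $Q^{(l)}$ of \eqref{define_T_e0l}, pass the convergence through the restricted spaces $\E_{l\setminus\infty}$ and $(0,\infty)\times\delta\aleph^{(l)}$, and use that $Q^{(l)}$ is a proper homeomorphism \emph{between those two restricted spaces}. Up to that point you agree with Step 1/Step 2 of the paper's argument. The identity $Q^{(l)}(\bx_i)=\bigl(m^{(l)}_i/m^{(l)}_{(k)},\,((1/r_i^j)/m^{(l)}_i,\,1\le j\le d)\bigr)$ and the observation that the limits correspond via \eqref{nu0ls0l} are also correct.

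The gap is in your last paragraph, where you assert that once one restricts to $\E_{l\setminus\infty}$ ``no further subtleties arise.'' The two convergences \eqref{non_stan_rank_conv} and \eqref{non_stan_rank_trans_conv} are asserted on the \emph{compactified} spaces $M_+(\E^{(l)})$ and $M_+((0,\infty]\times\delta\aleph^{(l)})$, whose compact sets are of the form $\{x^{(l)}\ge\delta\}$ and $[\delta,\infty]\times\delta\aleph^{(l)}$ and thus reach all the way out to $[x^{(l)}=\infty]$, respectively $\{r=\infty\}$. The restricted spaces $\E_{l\setminus\infty}$ and $(0,\infty)\times\delta\aleph^{(l)}$ have strictly fewer compact sets (first coordinate bounded above as well as below), so restriction yields a strictly weaker mode of convergence, and the continuous mapping theorem only transports that weaker convergence: $Q^{(l)}$ is proper as a map between the restricted spaces, but the preimage of the genuinely compact set $[\delta,\infty]\times\delta\aleph^{(l)}$ is $\{\bx\in\E_{l\setminus\infty}:x^{(l)}\ge\delta\}$, which is not compact in either $\E_{l\setminus\infty}$ or $\E^{(l)}$ (its closure in $\E^{(l)}$ meets $[x^{(l)}=\infty]$, where $Q^{(l)}$ has no continuous extension because the angular part $\bx/x^{(l)}$ oscillates). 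Knowing that neither the empirical measures nor $\nu^{(l)}$ charge $[x^{(l)}=\infty]$ is necessary but not sufficient: what must be ruled out is mass escaping \emph{toward} that set, i.e.\ one must prove $\lim_{M\to\infty}\limsup_{n}\frac{1}{k}\sum_{i=1}^n\epsilon_{(\cdot)}(\{\bx:x^{(l)}\ge M\})=0$ in the appropriate sense. This is exactly what the paper's truncation functions $\phi_{\delta,M}$ and the Laplace-functional computation accomplish, using the hypothesis itself together with the scaling property \eqref{scaling_e0l}, which gives $\nu^{(l)}(\{x^{(l)}\ge M\})=M^{-\alpha^{(l)}}\nu^{(l)}(\aleph^{(l)})\to0$ (and $\nu_{\alpha^{(l)}}([M,\infty])=M^{-\alpha^{(l)}}\to0$ in the reverse direction). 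Your proof needs this extension step spelled out; as written, the ``direct continuous-mapping computation'' only establishes the equivalence of the two \emph{restricted} convergences, not of \eqref{non_stan_rank_conv} and \eqref{non_stan_rank_trans_conv} themselves.
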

\begin{proof} See Appendix \ref{proof_prop_spec_most_general}.
\end{proof}

Detection of hidden regular variation on $\E^{(l)}$, for some
$2 \le l \le d,$ requires the prior conclusion that 
 $\left( {a^i}^{\leftarrow}(Z^i),\, i =1,2, \cdots,
    d\right)$
is standard regularly varying on a
bigger sub-cone $\E^{(j)} \supset \E^{(l)}$.
Using the rank transform, we explore for regular variation on $\E$ 
and then move sequentially through the cones $\E
\supset \E^{(2)} \supset \cdots .$  
We also need $\nu^{(j)}$ to satisfy $\nu^{(j)}(\E^{(l-1)}) > 0$
and $\nu^{(j)}(\E^{(l)}) = 0$
which is verified using the hidden spectral measure
$S^{(j)}.$ Finally, we verify regular variation on the cone $\E^{(l)}.$ From
Proposition \ref{rank_conv} and Proposition
\ref{spec_trans_mostgeneral},  HRV on $\E^{(l)}$ implies
\begin{equation}\label{second_component_rank__reg_var}
  \frac{1}{k}\sum_{i =1}^n \epsilon_{ m^{(l)}_i/ m^{(l)}_{(k)}}  \Rightarrow \nu_{\alpha^{(l)}} \quad \text{on $M_+((0, \infty])$}.
\end{equation}
We can use, for example, a  Hill plot to determine whether
\eqref{second_component_rank__reg_var} is true since
consistency of the Hill estimator is only dependent on the consistency
of the tail empirical measure and does not require the tail empirical
measure to be constructed using iid data.
(\cite{resnick:starica:1997a},  \cite[page
80]{resnickbook:2007}).
 This gives us an exploratory method
 for detecting hidden regular variation on $\E^{(l)}$ in the non-standard case. 

To estimate the limit measure $\nu^{(l)}$, it is again
  sufficient to estimate $\alpha^{(l)}$ and the hidden spectral
measure $S^{(l)}$.
Estimate $\alpha^{(l)}$ using, say,
 the Hill estimator
based on the rank-based data $\{ m^{(l)}_i, i = 1, 2, \cdots, n \}$
\cite[Chapter 4]{resnickbook:2007} and using
Proposition \ref{rank_conv} and Proposition
\ref{spec_trans_mostgeneral}, we get in $M_+(\delta \aleph^{(l)})$ that
$
\frac{1}{k} \sum_{i =1}^n \epsilon_{(m^{(l)}_i/ m^{(l)}_{(k)}, \hskip 0.1 cm  ((1/r_i^j)/ m^{(l)}_i, \hskip 0.1cm 1 \le j \le d )) }( [1, \infty]  \times \cdot ) \Rightarrow \nu_{\alpha^{(l)}}([1, \infty])S^{(l)}(\cdot) = S^{(l)}(\cdot)
$ or
\begin{equation}\label{non-stan_spec_est}
\hat S^{(l)}(\cdot) := 
\frac{ \sum_{i =1}^n \epsilon_{\left(m^{(l)}_i/ m^{(l)}_{(k)}, \hskip
      0.1 cm  \left((1/r_i^j)/ m^{(l)}_i, \hskip 0.1cm 1 \le j \le d
      \right) \right) }( [1, \infty]  \times \cdot ) 
}
{
\sum_{i =1}^n \epsilon_{\left(m^{(l)}_i/ m^{(l)}_{(k)} \right)
 }( [1, \infty] )
} \Rightarrow  S^{(l)}(\cdot).
\end{equation}
 This gives a consistent estimator of $S^{(l)}(\cdot)$.

\section{A different representation of the hidden spectral
  measure}\label{diff_para_sec} 

As discussed in the introduction, we map points of $\delta
\aleph^{(l)}$
to the $(d-1)$-dimensional simplex $\Delta_{d-1} = \{ {\bf{x}} \in
[0,1]^{d-1}: \sum_{i=1}^{d-1} x^i \le 1\}$. The probability measure
$\tilde S^{(l)}$ on the transformed points induced by $S^{(l)}$ is
called the transformed (hidden) spectral measure. However, we must
make the standing assumption that 
\begin{equation}\label{eqn:nonoinfinity}
\nu^{(l)}( \{ {\bf{x}} \in \E^{(l)}
: x^{(l)} \ge 1, x^{(1)} = \infty \} ) = 0,\qquad  \text{ for all }2
\le l \le d,
\end{equation}
whenever $\nu^{(l)}$ exists, since otherwise the transformation
is not one-one. Assumption
\eqref{eqn:nonoinfinity}
is not  very strong and most examples
satisfy this assumption. Nonetheless, this assumption is not
always true, as  illustrated by examples in Section
\ref{examples_conc_lim_measure_at_infty}. 
 Recall the conventions that
we replace  $\nu$, $\alpha$, $S$ and $\tilde S$ by $\nu^{(1)}$,
$\alpha^{(1)}$, $S^{(1)}$ and $\tilde S^{(1)}$ respectively.

\subsection{The transformation}\label{diff_par} 

First note that $\nu^{(1)}( \{ {\bf{x}} \in \E^{(1)} : x^{(1)} = \infty \} ) = 0$ due
to the scaling property of $\nu^{(1)}$ in \eqref{scaling_e} and the
compactness of  $\{ {\bf{x}} \in \E^{(1)} : x^{(1)} \ge 1 \}$ in
$\E^{(1)}$. So we may modify \eqref{eqn:nonoinfinity} to include $l=1$.

For each $l$, $1 \le l \le d$, define a transformation
  $T^{(l)}:\delta \aleph^{(l)} \mapsto \Delta_{d-1} =:\{ {\bf{s}} \in [0,1]^{d-1} : \sum_{i=1}^{d-1} s^i \le 1\}$, which is
one-one on an 
appropriate subset of $\delta \aleph^{(l)}$. The
appropriate subset is 
\begin{align}\label{d1l_define}
D_1^{(l)} = \{ {\bf{x}} \in \delta \aleph^{(l)} : x^{(1)} < \infty \}.
\end{align}
On $D_1^{(l)} $,  define $T^{(l)}$ as
\begin{align}\label{define_tl}
T^{(l)}({\bf{x}}) = \frac{( x^2, x^3, \cdots, x^d)}{\sum_{i=1}^d x^i}.
\end{align}
To identify $T^{(l)}(D_1^{(l)})$, first we define a map $\phi^{(l)} : \Delta_{d-1} \to [0,1]$ as
\begin{equation}\label{define_phi}
\phi^{(l)}(s^1, s^2, \cdots, s^{d-1}) = \hbox{the $l$-th largest component of} \hskip 0.2 cm ( 1- \sum_{i=1}^{d-1} s^i, s^1, s^2, \cdots, s^{d-1}).
\end{equation} 
Using this notation, we see that
\begin{equation}\label{d2l_range_of_tl}
D_2^{(l)} := T^{(l)}(D_1^{(l)}) = \{ (s^1, s^2, \cdots, s^{d-1}) \in \Delta_{d-1} : \phi^{(l)}(s^1, s^2, \cdots, s^{d-1}) > 0\} \subset \Delta_{d-1}.
\end{equation} 
To show that $T^{(l)}$ is one-one on $D_1^{(l)} $, we explicitly define the map ${T^{(l)}}^{-1}: D_2^{(l)} \to D_1^{(l)}$ as
\begin{equation}\label{define_inv_tl}
{T^{(l)}}^{-1}(s^1, s^2, \cdots, s^{d-1}) = \frac{( 1- \sum_{i=1}^{d-1} s^i, s^1, s^2, \cdots, s^{d-1})}{\phi^{(l)}(s^1, s^2, \cdots, s^{d-1})} .
\end{equation} 
We extend our definition of $T^{(l)}$ from $D_1^{(l)}$ to the entire
set $\delta \aleph^{(l)}$ by setting $T^{(l)}({\bf{x}}) = {\bf{0}}$
for ${\bf{x}} \in {D_1^{(l)}}^c$. We define a similar extension of
${T^{(l)}}^{-1}$ to the whole simplex $\Delta_{d-1}$ by setting
${T^{(l)}}^{-1}(s^1, s^2, \cdots, s^{d-1}) = {\bf{1}}$ for $(s^1, s^2,
\cdots, s^{d-1}) \in {D_2^{(l)}}^c$. Now define the probability
measure $\tilde S^{(l)} =
S^{(l)} \circ {T^{(l)}}^{-1}$
 on $\Delta_{d-1}$; this is called  the transformed hidden angular measure on $\E^{(l)}$. Note that, 
\begin{align*}
S^{(l)}({D_1^{(l)}}^c) &= \nu^{(l)}(\{ {\bf{x}} \in \E^{(l)} : x^{(l)} \ge 1, \frac{ {\bf{x}} }{x^{(l)}} \in {D_1^{(l)}}^c \}) = \nu^{(l)}(\{ {\bf{x}} \in \E^{(l)} : x^{(l)} \ge 1, x^{(1)} = \infty \}) = 0.
\end{align*}
 Therefore, using  \eqref{d2l_range_of_tl}, we get $\tilde S^{(l)}(D_2^{(l)}) =1$. Since $T^{(l)}$ is one-one on $D_1^{(l)}$ and $ S^{(l)}(D_1^{(l)}) =1$, for any Borel set $ A \subset \delta \aleph^{(l)}$, we can compute $S^{(l)}(A)$ by noting that
\begin{equation}\label{s_and_tildes}
S^{(l)}(A) = S^{(l)}(A \cap D_1^{(l)}) = \tilde S^{(l)}(T^{(l)}(A \cap D_1^{(l)})).
\end{equation}
So, studying the transformed hidden angular measure $\tilde S^{(l)}$ on the nice set $\Delta_{d-1}$ is sufficient to understand the hidden angular measure $S^{(l)}$.

\subsection{Estimation of $\tilde S^{(l)}$ } \label{est_hidden_spec_e0l}

In the standard case, we get from \eqref{stan_spec_est}, 
\begin{equation}\label{stan_est_angular}
\hat S^{(l)}(\cdot)  := \frac{1}{k} \sum_{i =1}^n \epsilon_{\left( Z^{(l)}_i/ Z^{(l)}_{(k)}, \hskip 0.1 cm {\bf{Z}}_i/ Z^{(l)}_i \right)} ([1, \infty] \times \cdot ) \Rightarrow S^{(l)}(\cdot) 
\end{equation}
on $M_+(\delta \aleph^{(l)})$. The function $T^{(l)}$ defined
\eqref{define_tl} 
is continuous on $D_1^{(l)}$ and hence is continuous almost surely
with respect to the probability measure $S^{(l)}$. Therefore, by the
continuous mapping theorem \citep[page 21]{billingsley:1999}, 
\begin{equation}\label{stan_est_angular_on _simplex}
\hat S^{(l)} \circ {T^{(l)}}^{-1}(\cdot) := \frac{1}{k} \sum_{i =1}^n \epsilon_{\left( Z^{(l)}_i/ Z^{(l)}_{(k)}, \hskip 0.1 cm T^{(l)}\left( {\bf{Z}}_i/ Z^{(l)}_i \right) \right)} ([1, \infty] \times \cdot ) \Rightarrow S^{(l)} \circ {T^{(l)}}^{-1}(\cdot) = \tilde S^{(l)} (\cdot) 
\end{equation}
on $M_+(\Delta_{d-1})$. Conversely,  \eqref{stan_est_angular_on _simplex}
implies \eqref{stan_est_angular} by continuity of
${T^{(l)}}^{-1}$ on $D_2^{(l)}$ and the fact $\tilde
S^{(l)}(D_2^{(l)}) = 1$. Thus
\eqref{stan_est_angular} and \eqref{stan_est_angular_on _simplex} are
equivalent. 

In the non-standard case, \eqref{non-stan_spec_est} implies that on $M_+(\delta \aleph^{(l)})$,
\begin{align*}
\hat S^{(l)}(\cdot)  := &\frac{ 1}{k} \sum_{i =1}^n \epsilon_{\left( m^{(l)}_i/ m^{(l)}_{(k)}, \hskip 0.1 cm \left((1/r_i^j)/ m^{(l)}_i, \hskip 0.1cm 1 \le j \le d \right) \right)} ([1, \infty] \times \cdot ) \Rightarrow S^{(l)}(\cdot) .
\end{align*}
 By a similar argument as in the standard case, this is equivalent to the fact that on $M_+(\Delta_{d-1})$,
\begin{equation*}
\hat S^{(l)} \circ {T^{(l)}}^{-1}(\cdot) := \frac{1}{k} \sum_{i =1}^n \epsilon_{\left( m^{(l)}_i/ m^{(l)}_{(k)}, \hskip 0.1 cm T^{(l)}\left( (1/r_i^j)/ m^{(l)}_i , \hskip 0.1cm 1 \le j \le d \right) \right)} ([1, \infty] \times \cdot ) \Rightarrow S^{(l)} \circ {T^{(l)}}^{-1}(\cdot) = \tilde S^{(l)} (\cdot) .
\end{equation*}

\subsection{Supports of transformed (hidden) spectral measure $ \tilde S^{(l)}$} \label{inference_hidden_spec}
The following lemma illustrates that the supports of the transformed (hidden) spectral measures are disjoint. 
 \begin{lem}\label{lemma_2}
\rm{ Recall $D_2^{(l)}$ defined in \eqref{d2l_range_of_tl}. For $1 \le j < l \le d$,
$$\nu^{(j)}(\E^{(l)}) = 0 \hskip 0.1 cm \hbox{iff} \hskip 0.2 cm \tilde S^{(j)}(D_2^{(l)}) = 0.$$
}
\end{lem}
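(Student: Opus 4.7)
The plan is to unpack both sides via the decomposition established in Proposition \ref{stan_transform} and the explicit formula for $T^{(j)}$, and show both are equivalent to the single condition $S^{(j)}(\{\mathbf{x}\in\delta\aleph^{(j)}:x^{(l)}>0\})=0$.

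First I would identify the set ${T^{(j)}}^{-1}(D_2^{(l)})\cap D_1^{(j)}$. For $\mathbf{x}\in D_1^{(j)}$ we have $x^{(j)}=1$ and $x^{(1)}<\infty$, so $\sum_{i=1}^d x^i$ is finite and strictly positive, and
\[
T^{(j)}(\mathbf{x}) = \bigl(x^2,x^3,\ldots,x^d\bigr)\big/\textstyle\sum_{i=1}^d x^i.
\]
Consequently the vector $(1-\sum_{i=1}^{d-1}s^i,s^1,\ldots,s^{d-1})$ appearing in the definition of $\phi^{(l)}$ is exactly $(x^1,x^2,\ldots,x^d)/\sum_i x^i$, whose $l$-th largest component equals $x^{(l)}/\sum_i x^i$. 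Hence $\phi^{(l)}(T^{(j)}(\mathbf{x}))>0$ iff $x^{(l)}>0$, i.e.
\[
{T^{(j)}}^{-1}(D_2^{(l)})\cap D_1^{(j)} = \{\mathbf{x}\in D_1^{(j)}:x^{(l)}>0\}.
\]
Using the standing assumption \eqref{eqn:nonoinfinity}, we get $S^{(j)}({D_1^{(j)}}^{c})=0$, so
\[
\tilde S^{(j)}(D_2^{(l)}) = S^{(j)}\bigl(\{\mathbf{x}\in \delta\aleph^{(j)}:x^{(l)}>0\}\bigr) =: S^{(j)}(\Lambda^{(l)}).
\]

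Next I would relate $\nu^{(j)}(\E^{(l)})$ to $S^{(j)}(\Lambda^{(l)})$ using \eqref{nu0ls0l}. For any $r>0$, applying \eqref{nu0ls0l} with $\Lambda=\Lambda^{(l)}$ yields
\[
\nu^{(j)}\bigl(\{\mathbf{x}\in\E^{(j)}:x^{(j)}\ge r,\ x^{(l)}>0\}\bigr) = r^{-\alpha^{(j)}}S^{(j)}(\Lambda^{(l)}).
\]
Because $\E^{(l)}=\{\mathbf{x}\in\E^{(j)}:x^{(l)}>0\}=\bigcup_{n\ge 1}\{\mathbf{x}\in\E^{(j)}:x^{(j)}\ge 1/n,\,x^{(l)}>0\}$, monotone convergence in $r\downarrow 0$ gives
\[
\nu^{(j)}(\E^{(l)}) = \lim_{r\downarrow 0} r^{-\alpha^{(j)}} S^{(j)}(\Lambda^{(l)}).
\]
Since $\alpha^{(j)}>0$, the right-hand side is $0$ if $S^{(j)}(\Lambda^{(l)})=0$ and $+\infty$ otherwise. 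Therefore $\nu^{(j)}(\E^{(l)})=0$ iff $S^{(j)}(\Lambda^{(l)})=0$, and combining with the first step yields the claimed equivalence.

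The only non-routine piece is Step 1, namely verifying $\phi^{(l)}(T^{(j)}(\mathbf{x}))=x^{(l)}/\sum_i x^i$ and exploiting \eqref{eqn:nonoinfinity} to discard the $D_1^{(j)}$ restriction; Step 2 is just the scaling identity plus monotone convergence. I do not foresee an obstacle beyond carefully writing the chain of set identities.
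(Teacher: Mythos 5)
Your proof is correct and follows essentially the same route as the paper: both arguments reduce $\tilde S^{(j)}(D_2^{(l)})$ to $S^{(j)}(\{\boldsymbol{\theta}\in\delta\aleph^{(j)}:\theta^{(l)}>0\})$ via the map $T^{(j)}$, and both use the product decomposition $\nu_{\alpha^{(j)}}\times S^{(j)}$ with the scaling in $r$ to conclude that $\nu^{(j)}(\E^{(l)})$ is $0$ or $+\infty$ according as that $S^{(j)}$-mass vanishes or not. The only differences are cosmetic (order of the two steps, and invoking \eqref{nu0ls0l} with a monotone limit in place of the continuous mapping argument through $Q^{(j)}$).
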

\begin{proof}
By the scaling property \eqref{scaling_e} or \eqref{scaling_e0l}, $\nu^{(j)}(\{ {\bf{x}} \in \E : x^{(j)} = \infty \} ) = 0$, and hence, by the continuous mapping theorem,
 \begin{align*}
 \nu^{(j)}(\E^{(l)}) =  \nu^{(j)}(\E^{(l)} \cap \{ {\bf{x}} \in \E : x^{(j)} < \infty \}) = \nu_{\alpha^{(j)}} \times S^{(j)} (Q^{(j)}(\E^{(l)} \cap \{ {\bf{x}} \in \E : x^{(j)} < \infty \})), 
 \end{align*}
 where $Q^{(j)}({\bf{x}}) = \left( x^{(j)}, \frac{{\bf{x}}}{ x^{(j)}} \right)$. 
Now,
\begin{align*}
\nu_{\alpha^{(j)}} \times S^{(j)} (Q^{(j)}(\E^{(l)} \cap \{ {\bf{x}} \in \E : x^{(j)} < \infty \})) &= \nu_{\alpha^{(j)}} \times S^{(j)}( \{ (r, {\boldsymbol{\theta}}) \in(0, \infty) \times \delta \aleph^{(j)}: \theta^{(l)} > 0 \} ) \\
&= \lim_{\lambda \to 0} \lambda^{-\alpha^{(j)}} S^{(j)}( \{ {\boldsymbol{\theta}} \in \delta \aleph^{(j)}: \theta^{(l)} > 0 \} )
\end{align*}
Hence, $\nu^{(j)}(\E^{(l)}) = 0$ iff $S^{(j)}( \{ {\boldsymbol{\theta}} \in \delta \aleph^{(j)}: \theta^{(l)} > 0 \} ) = 0$. Since $S^{(j)}(D_1^{(j)}) = 1$, where $D_1^{(j)}$ is as given in \eqref{d1l_define}, we get
 \begin{align*}
S^{(j)}( \{ {\boldsymbol{\theta}} \in \delta \aleph^{(j)}: \theta^{(l)} > 0 \} ) &= S^{(j)}( \{ {\boldsymbol{\theta}} \in \delta \aleph^{(j)}: \theta^{(l)} > 0 \} \cap D_1^{(j)})\\
&= \tilde S^{(j)}( T^{(j)}(\{ {\boldsymbol{\theta}} \in \delta \aleph^{(j)}: \theta^{(l)} > 0 \} \cap D_1^{(j)} ))\\
&= \tilde S^{(j)}(\{ (s^1, s^2, \cdots, s^{d-1}) \in \Delta_{d-1} : \phi^{(l)}(s^1, s^2, \cdots, s^{d-1}) > 0 \}) \\
&= \tilde S^{(j)}(D_2^{(l)}).
\end{align*}
Hence, the result follows.
\end{proof}

\begin{rem}\label{rem_lemma_2}
 \rm{The fact that $\nu^{(j)}(\E^{(l)}) = 0$ iff $S^{(j)}(\{
   {\boldsymbol{\theta}} \in \delta \aleph^{(j)} : \theta^{(l)} > 0 \}
   ) = 0$, follows from the proof of Lemma \ref{lemma_2}. Notice, this
   result does not require the assumption \eqref{eqn:nonoinfinity}. } 
 \end{rem}

 If $\nu^{(j)}(\E^{(l)}) = 0$ and HRV on $\E^{(l)}$ exists, then
 the support of $\tilde S^{(j)}$ is contained in ${D_2^{(l)}}^c$ and
 the support of $\tilde S^{(l)}$ is contained in $D_2^{(l)}$, which
 are disjoint. So, if one  seeks (hidden) regular variation on the
nested cones $\E = \E^{(1)} \supset \E^{(2)} \supset \cdots \supset
 \E^{(d)}$, if HRV is present, the 
 transformed spectral measure and the transformed hidden spectral
 measures on $\Delta_{d-1}$ will have disjoint supports. 
 
 For a visual illustration,  fix $d = 3$ and suppose
 $\tilde S^{(1)}$ is concentrated on the corner points of the triangle
 $\Delta_2$. 
By Lemma \ref{lemma_2},  $\nu^{(1)}(\E^{(2)}) = 0$ and we search for
HRV on $\E^{(2)}$. Assume that it is indeed present and so consider
$\tilde S^{(2)}$. As we have already noticed, the support of 
$\tilde S^{(2)}$ is contained in $D_2^{(2)}$ and hence does not put
any mass on the corner points of the triangle $\Delta_2$. Therefore,
$\tilde S^{(2)}$ and $\tilde S^{(1)}$ have disjoint supports. Two
cases might arise from this situation. In the first case,
$\tilde S^{(2)}$ puts positive mass in the interior of the triangle
$\Delta_2$. Applying Lemma \ref{lemma_2}, we infer that
$\nu^{(2)}(\E^{(3)}) > 0$ which rules out the possibility of 
 HRV on $\E^{(3)}$. Hence, we do not consider $\tilde
S^{(3)}$. In the second case, $\tilde S^{(2)}$ is concentrated on the
axes of the triangle $\Delta_2$ and by Lemma \ref{lemma_2},
$\nu^{(2)}(\E^{(3)}) = 0$. Hence, as usual, we search for HRV on
$\E^{(3)}$ and let us assume that it is present. Then, we 
consider $\tilde S^{(3)}$. As noted, the support of $\tilde
S^{(3)}$ is contained in $D_2^{(3)}$ and hence it only puts mass in
the interior of the triangle $\Delta_2$. Hence, in this case, all
three of $\tilde S^{(1)}$, $\tilde S^{(2)}$ and $\tilde S^{(3)}$
have disjoint supports.  
  
 Now, consider another case, where $\tilde S^{(1)}$ is not
 concentrated on the corner points of the triangle $\Delta_2$, but is
 concentrated on its axes. Using Lemma \ref{lemma_2}, 
$\nu(\E^{(2)}) > 0$, but $\nu^{(1)}(\E^{(3)}) = 0$. So, we
 should not search for HRV on $\E^{(2)}$ and hence should not consider
 $\tilde S^{(2)}$. However, we  consider presence of HRV on
 $\E^{(3)}$ and hence consider $\tilde S^{(3)}$. But, the support of
 $\tilde S^{(3)}$ is contained in the interior of the triangle
 $\Delta_2$ and hence $\tilde S^{(3)}$ does not put any mass on the
 axes. So, in this case also, we would consider only $\tilde S^{(3)}$
 and $\tilde S^{(1)}$, which have disjoint supports. 
 
 In the final case, suppose $\tilde S^{(1)}$ puts mass in the interior
 of the triangle $\Delta_{2}$.  Lemma \ref{lemma_2}
 implies $\nu^{(1)}(\E^{(3)}) > 0$
and we should not seek HRV on any of the sub-cones $\E^{(2)}$ or $\E^{(3)}$. 
 
In all these illustrative cases, 
 the transformed spectral measure and the transformed hidden spectral measures have disjoint supports.

\subsection{Lines through ${\boldsymbol{\infty}}$ } \label{examples_conc_lim_measure_at_infty}
Section \ref{diff_para_sec} made the standing assumption \eqref{eqn:nonoinfinity}, which is not always true. In
Example \ref{mass_at_infty_ex1}, the measure $\nu^{(2)}$ 
concentrates on the lines through $\infty;$ i.e., on the set $\{
{\bf{x}} \in \E^{(2)} : x^{(1)} = \infty \}.$ 
Examples \ref{mass_at_infty_ex2} and \ref{mass_at_infty_ex3} show that
for $2 \le j < l \le d,$ $\nu^{(l)}( \{ {\bf{x}} \in \E^{(l)} :
x^{(l)} \ge 1, x^{(1)} = \infty \} ) = 0$ does not imply $\nu^{(j)}(
\{ {\bf{x}} \in \E^{(j)} : x^{(j)} \ge 1, x^{(1)} = \infty \} ) = 0$
and vice versa.

 \begin{ex}\label{mass_at_infty_ex1}
\rm{
Let $X$ and $Y$ be two iid Pareto($1$) random variables. Let $B$ be another random variable independent of $(X,Y)$ such that $P[ B = 0] = P[ B = 1] = \frac{1}{2}$. Define
$$ {\bf{Z}} = (Z^1, Z^2) = B(X, X^2) + (1-B)(Y^2, Y),$$
so that
\begin{equation*}
 tP\left[ {{\bf{Z}}}/{t^2} \in \cdot \,\right] \stackrel{v}{\rightarrow} \nu(\cdot) \quad \text{in $M_+(\E)$},
\end{equation*}
where for $w, v > 0$, $\nu((w, \infty] \times [0, \infty]) =
\frac{1}{2} w^{-1/2}$, $\nu([0, \infty] \times (v, \infty] ) =
\frac{1}{2} v^{-1/2}$ and $\nu(\E^{(2)}) = 0$. 
For $w, v > 0$,
\begin{align*}
\lim_{t \to \infty} tP&\left[ \frac{{\bf{Z}}}{t} \in  (w, \infty] \times
  (v, \infty] \right]= \lim_{t \to \infty} \frac{t}{2}P\left[  
X > tw, X^2 > tv \right] + \lim_{t \to \infty} \frac{t}{2}P\left[ Y^2 > tw, Y> tv \right] \\
&= \lim_{t \to \infty} \frac{t}{2}P\left[ X > tw \right] + \lim_{t \to
  \infty} \frac{t}{2}P\left[ Y> tv \right] 
= \frac{1}{2}\left(\frac{1}{w} + \frac{1}{v} \right).
\end{align*}
So HRV exists on the cone $\E^{(2)}$ with limit measure $\nu^{(2)}$ such that 
\begin{equation*}
\nu^{(2)}((w, \infty] \times (v, \infty] )= \frac{1}{2}\left(\frac{1}{w} + \frac{1}{v} \right).
\end{equation*}
Hence, letting $v \to \infty$, we get $\nu^{(2)}((w, \infty] \times \{ \infty \} )= \frac{1}{2w}$
and similarly, $\nu^{(2)}(\{ \infty \} \times (v, \infty] )= \frac{1}{2v}.$ So, we conclude that in this case, $\nu^{(2)}( \{ {\bf{x}} \in \E^{(l)} : x^{(2)} \ge 1, x^{(1)} = \infty \} ) = 1$.
}
\end{ex}

\begin{ex}\label{mass_at_infty_ex2}
\rm{Let $X_1, X_2, \cdots, X_5$ be five iid Pareto($1$) random variables. Let $(B_1, B_2, B_3)$ be another set of random variables independent of $(X_1, X_2, \cdots, X_5)$ such that $P[ B_i = 1] = 1 - P[ B_i = 0] = \frac{1}{3}$ and $\sum_{i=1}^3 B_i =1$. Now, define ${\bf{Z}}$ as
$$ {\bf{Z}} = (Z^1, Z^2, Z^3) = B_1(X_1, X_1^2, 0) + B_2(X_2^2, X_2, 0) + B_3(X_3^2, X_4^2, X_5^2).$$
It follows that 
\begin{equation*}
tP\left[ \frac{{\bf{Z}}}{25t^2/9} \in \cdot \right] \stackrel{v}{\rightarrow} \nu(\cdot) \quad \text{in $M_+(\E)$},
\end{equation*}
where for $w, v, x > 0$, $\nu((w, \infty] \times [0, \infty] \times [0, \infty] ) = \frac{2}{5} w^{-1/2}$, $\nu([0, \infty] \times (v, \infty] \times [0, \infty] ) = \frac{2}{5} v^{-1/2}$, $\nu( [0, \infty] \times [0, \infty] \times (x, \infty] ) = \frac{1}{5} x^{-1/2}$ and $\nu(\E^{(2)}) = 0$. Now, we look for HRV on $\E^{(2)}$. Notice that 
\begin{equation*}
tP\left[ \frac{{\bf{Z}}}{5t/3} \in \cdot \right] \stackrel{v}{\rightarrow} \nu^{(2)}(\cdot) \quad \text{in $M_+(\E^{(2)})$},
\end{equation*}
where for $w, v, x > 0$, $\nu^{(2)}((w, \infty] \times (v, \infty] \times [0, \infty] ) = \frac{1}{5} \left(w^{-1} + v^{-1} + {(wv)}^{-1/2} \right)$, $\nu^{(2)}([0, \infty] \times (v, \infty] \times (x, \infty] ) = \frac{1}{5} {(vx)}^{-1/2}$, $\nu^{(2)}( (w, \infty] \times [0, \infty] \times (x, \infty] ) = \frac{1}{5} {(xw)}^{-1/2}$ and $\nu^{(2)}(\E^{(3)}) = 0$. Hence, letting $v \to \infty$, we get
\begin{equation*}
\nu^{(2)}((w, \infty] \times \{ \infty \} \times [0, \infty] )= \frac{1}{5w},
\end{equation*}
and so $\nu^{(2)}( \{ {\bf{x}} \in \E^{(2)} : x^{(2)} \ge 1, x^{(1)} =
\infty \} ) > 0$. 
We now seek HRV on the cone $\E^{(3)}$. For $w, v, x > 0$,
\begin{align*}
\lim_{t \to \infty} tP&\left[ \frac{{\bf{Z}}}{{(t/3)}^{2/3}} \in (w, \infty] \times (v, \infty] \times (x, \infty] \right] \\
&= \lim_{t \to \infty} \frac{t}{3}P\left[ X_3^2 > {(t/3)}^{2/3}w, X_4^2> {(t/3)}^{2/3}v, X_5^2 > {(t/3)}^{2/3}x \right] = {(wvx)}^{-1/2}.
\end{align*}
So, HRV exists on the cone $\E^{(3)}$ with limit measure $\nu^{(3)}$ such that for $w, v, x > 0$,
\begin{equation*}
\nu^{(3)}((w, \infty] \times (v, \infty] \times (x, \infty] )= {(wvx)}^{-1/2}.
\end{equation*}
Hence, for this example, $\nu^{(3)}( \{ {\bf{x}} \in \E^{(3)} : x^{(3)} \ge 1, x^{(1)} = \infty \} ) = 0$ and thus for $2 \le j < l \le d,$ $\nu^{(l)}( \{ {\bf{x}} \in \E^{(l)} : x^{(l)} \ge 1, x^{(1)} = \infty \} ) = 0$ does not imply $\nu^{(j)}( \{ {\bf{x}} \in \E^{(j)} : x^{(j)} \ge 1, x^{(1)} = \infty \} ) = 0$.
}
\end{ex}

 \begin{ex}\label{mass_at_infty_ex3}
\rm{Let $X_1, X_2, \cdots, X_5$ be five iid Pareto($1$) random
  variables. Let $(B_1, B_2, B_3)$ be another set of random variables
  independent of $(X_1, X_2, \cdots, X_5)$ such that $P[ B_i = 1] = 1
  - P[ B_i = 0] = \frac{1}{3}$ and $\sum_{i=1}^3 B_i =1$. Now, define
  ${\bf{Z}}$ as 
$$ {\bf{Z}} = (Z^1, Z^2, Z^3) = B_1(X_1, X_1^3, X_1^{5/4}) + B_2(X_2^3, X_2, X_2^{5/4}) + B_3(X_3^3, X_4^3, X_5^3).$$
It follows that
\begin{equation*}
tP\left[ \frac{{\bf{Z}}}{125t^3/27} \in \cdot \right] \stackrel{v}{\rightarrow} \nu(\cdot) \quad \text{in $M_+(\E)$},
\end{equation*}
 where for all $w, v, x > 0$, $\nu((w, \infty] \times [0, \infty]
 \times [0, \infty] ) = \frac{2}{5} w^{-1/3}$, $\nu([0, \infty] \times
 (v, \infty] \times [0, \infty] ) = \frac{2}{5} v^{-1/3}$, $\nu( [0,
 \infty] \times [0, \infty] \times (x, \infty] ) = \frac{1}{5}
 x^{-1/3}$ and $\nu(\E^{(2)}) = 0$. Now, when we seek HRV on
 $\E^{(2)}$, we get
 \begin{equation*}
tP\left[ \frac{{\bf{Z}}}{ t^{3/2}} \in \cdot \right] \stackrel{v}{\rightarrow} \nu^{(2)}(\cdot) \quad \text{in $M_+(\E^{(2)})$},
\end{equation*}
where for $w, v, x > 0$, $\nu^{(2)}((w, \infty] \times (v, \infty]
\times [0, \infty] ) = \frac{1}{3} {(wv)}^{-1/3}$, $\nu^{(2)}([0,
\infty] \times (v, \infty] \times (x, \infty] ) = \frac{1}{3}
{(vx)}^{-1/3}$, $\nu^{(2)}( (w, \infty] \times [0, \infty] \times (x,
\infty] ) = \frac{1}{3} {(xw)}^{-1/3}$ and $\nu^{(2)}(\E^{(3)}) =
0$. Notice, $\nu^{(2)}( \{ {\bf{x}} \in \E^{(2)} : x^{(2)} \ge 1,
x^{(1)} = \infty \} ) = 0$. Now, we look for HRV on the cone
$\E^{(3)}$. For $w, v, x > 0$, 
\begin{align*}
\lim_{t \to \infty} tP&\left[ \frac{{\bf{Z}}}{t} \in (w, \infty] \times (v, \infty] \times (x, \infty] \right] \\
&=  \lim_{t \to \infty} \frac{t}{3}P\left[ X_1 > tw, X_1^3> tv, X_1^{5/4} > tx \right] \\
& \qquad 
+  \lim_{t \to \infty} \frac{t}{3}P\left[ X_2^3 > tw, X_2> tv, X_2^{5/4} > tx \right] \\
& \qquad 
+ \lim_{t \to \infty} \frac{t}{3}P\left[ X_3^3 > tw, X_4^3 > tv, X_5^3 > tx \right] \\
&=  \lim_{t \to \infty} \frac{t}{3}P\left[ X_1 > tw \right] + \lim_{t \to \infty} \frac{t}{3}P\left[ X_2 > tv \right] \\
& \qquad + \lim_{t \to \infty} \frac{t}{3}P\left[ X_3 > {(tw)}^{1/3}, X_4 > {(tv)}^{1/3}, X_5 > {(tx)}^{1/3} \right] \\
&= \frac{1}{3}\left( w^{-1} + v^{-1} + {(wvx)}^{-1/3} \right).
\end{align*}
So, HRV exists on the cone $\E^{(3)}$ with limit measure $\nu^{(3)}$ such that 
\begin{equation*}
\nu^{(3)}((w, \infty] \times (v, \infty] \times (x, \infty] )= \frac{1}{3}\left( w^{-1} + v^{-1} + {(wvx)}^{-1/3} \right).
\end{equation*}
Following Example \ref{mass_at_infty_ex1}, 
 $\nu^{(3)}( \{ {\bf{x}} \in \E^{(3)} : x^{(3)} \ge 1, x^{(1)} =
 \infty \} ) = 2/3$ so that
for $2 \le j < l \le d,$ $\nu^{(j)}( \{ {\bf{x}} \in \E^{(j)} : x^{(j)} \ge 1, x^{(1)} = \infty \} ) = 0$ does not imply $\nu^{(l)}( \{ {\bf{x}} \in \E^{(l)} : x^{(l)} \ge 1, x^{(1)} = \infty \} ) = 0$.
}
\end{ex}

\section{Deciding finiteness of $\nu^{(l)}(\{ {\bf{x}} \in \E^{(l)} : ||{\bf{x}}|| > 1 \})$ }\label{decide_finite_sec}

For characterizations of HRV \citep{maulik:resnick:2005}, 
it is useful to characterize
when 
$\nu^{(l)}(\{ {\bf{x}} \in \E^{(l)} : ||{\bf{x}}|| > 1 \})$ is finite
and when it is not, where $||{\bf{x}}||$ is any norm of ${\bf{x}}$. 
Such characterizations are also useful for estimating
risk set probabilities. For example,
the limit measure $\nu^{(l)}$ puts a finite mass on a risk set of the
form $\{ {\bf{x}} \in \E^{(l)} : a_1x^1 + a_2x^2 + \cdots + a_dx^d > y
\}, a_i > 0, i= 1, 2, \cdots, d, y > 0$, iff $\nu^{(l)}(\{ {\bf{x}}
\in \E^{(l)} : ||{\bf{x}}|| > 1 \})$ is finite. The HRV theory is not
useful for estimation of risk set probability if the limit measure puts infinite mass on that
risk region.

The following section resolves this issue using a moment condition. 
Subsequently we show that for $2 \le j < l \le
d$, neither $\nu^{(l)}(\{ {\bf{x}} \in \E^{(l)} : ||{\bf{x}}|| > 1
\})$ being finite implies  $\nu^{(j)}(\{ {\bf{x}} \in \E^{(j)} :
||{\bf{x}}|| > 1 \})$ is finite, nor the reverse is true.

\subsection{A moment condition} The following theorem gives a
necessary and sufficient condition for the finiteness of $\nu^{(l)}(\{
{\bf{x}} \in \E^{(l)} : ||{\bf{x}}|| > 1 \})$. 
For $d=2$, the condition of Theorem \ref{finite_infinite_thm1} is given in
  Proposition 5.1 of \cite{maulik:resnick:2005}.

\begin{thm}\label{finite_infinite_thm1}
\rm{For each $l$, $2 \le l \le d$, the limit measure $\nu^{(l)}$ puts finite mass on the set $\{ {\bf{x}} \in \E^{(l)} : ||{\bf{x}}|| > 1 \}$, i.e. $\nu^{(l)}(\{ {\bf{x}} \in \E^{(l)} : ||{\bf{x}}|| > 1 \})$ is finite iff
\begin{equation}\label{norm_finite_condn}
\int_{\delta \aleph^{(l)}} {||{\boldsymbol{\theta}}||}^{\alpha^{(l)}} S^{(l)}(d{\boldsymbol{\theta}}) < \infty.
\end{equation}
}
\end{thm}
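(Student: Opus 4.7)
The plan is to reduce the statement to a direct Fubini calculation using the product decomposition of $\nu^{(l)}$ supplied by Proposition \ref{stan_transform}. Since $\|\bx\|=x^{(l)}\cdot\|\bx/x^{(l)}\|$ for any $\bx\in\E^{(l)}$ with $x^{(l)}<\infty$, under the transformation $Q^{(l)}(\bx)=(x^{(l)},\bx/x^{(l)})$ the set $\{\bx\in\E^{(l)}:\|\bx\|>1\}$ (intersected with $\{x^{(l)}<\infty\}$) is mapped bijectively onto
$\{(r,\boldsymbol{\theta})\in(0,\infty)\times\delta\aleph^{(l)}: r\|\boldsymbol{\theta}\|>1\}$,
so one hopes to write
\begin{equation*}
\nu^{(l)}(\{\|\bx\|>1\}) \;=\; \int_{\delta\aleph^{(l)}}\int_0^\infty \mathbf{1}_{\{r\|\boldsymbol{\theta}\|>1\}}\,\nu_{\alpha^{(l)}}(dr)\,S^{(l)}(d\boldsymbol{\theta}).
\end{equation*}

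First I would dispose of the boundary at infinity. Using the scaling property \eqref{scaling_e0l} and the normalization $\nu^{(l)}(\aleph^{(l)})=1$, one has $\nu^{(l)}(\{x^{(l)}\ge t\})=t^{-\alpha^{(l)}}$ for every $t>0$, hence $\nu^{(l)}(\{x^{(l)}=\infty\})=0$. Therefore the mass of $\{\|\bx\|>1\}$ under $\nu^{(l)}$ agrees with its mass under the restriction of $\nu^{(l)}$ to $\{x^{(l)}<\infty\}$, where $Q^{(l)}$ is one-to-one and $\nu^{(l)}\circ (Q^{(l)})^{-1}=\nu_{\alpha^{(l)}}\times S^{(l)}$ by Proposition \ref{stan_transform}.

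Next I would apply Fubini to the product measure. For any fixed $\boldsymbol{\theta}\in\delta\aleph^{(l)}$ with $0<\|\boldsymbol{\theta}\|<\infty$, the inner integral is
\begin{equation*}
\int_0^\infty \mathbf{1}_{\{r>1/\|\boldsymbol{\theta}\|\}}\,\alpha^{(l)}r^{-\alpha^{(l)}-1}\,dr \;=\; \|\boldsymbol{\theta}\|^{\alpha^{(l)}},
\end{equation*}
which yields the desired identity $\nu^{(l)}(\{\|\bx\|>1\})=\int_{\delta\aleph^{(l)}} \|\boldsymbol{\theta}\|^{\alpha^{(l)}}\,S^{(l)}(d\boldsymbol{\theta})$, and the equivalence of finiteness follows at once.

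The only delicate point, and the main obstacle, is to check that the identity remains coherent on the atoms $\{\boldsymbol{\theta}\in\delta\aleph^{(l)}:\|\boldsymbol{\theta}\|=\infty\}$, i.e.\ on that part of $\delta\aleph^{(l)}$ lying on the lines through $\boldsymbol{\infty}$ discussed in Section \ref{examples_conc_lim_measure_at_infty}. On this set the inner integral above is $\int_0^\infty\alpha^{(l)}r^{-\alpha^{(l)}-1}dr=\infty$, and the integrand $\|\boldsymbol{\theta}\|^{\alpha^{(l)}}=\infty$ as well, so the two sides of the claimed equality agree (both equal $\infty$ when $S^{(l)}(\{\|\boldsymbol{\theta}\|=\infty\})>0$ and both are computed as above on $\{\|\boldsymbol{\theta}\|<\infty\}$); consequently the finiteness of $\nu^{(l)}(\{\|\bx\|>1\})$ is equivalent to the simultaneous finiteness of the angular integral on $\{\|\boldsymbol{\theta}\|<\infty\}$ \emph{and} the vanishing of $S^{(l)}$ on $\{\|\boldsymbol{\theta}\|=\infty\}$, which is precisely what \eqref{norm_finite_condn} asserts. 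Verifying this dichotomy cleanly, together with the measurability of $\|\boldsymbol{\theta}\|$ on $\delta\aleph^{(l)}$, is the only non-trivial bookkeeping in the argument.
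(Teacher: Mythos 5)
Your proposal is correct and follows essentially the same route as the paper: the paper's proof is exactly the identity $\nu^{(l)}(\{\|{\bf{x}}\|>1\})=\nu_{\alpha^{(l)}}\times S^{(l)}(\{r\|{\boldsymbol{\theta}}\|>1\})=\int_{\delta\aleph^{(l)}}\|{\boldsymbol{\theta}}\|^{\alpha^{(l)}}S^{(l)}(d{\boldsymbol{\theta}})$ obtained via the decomposition of Proposition \ref{stan_transform} and Fubini. Your extra care with the set $\{\|{\boldsymbol{\theta}}\|=\infty\}$ is a correct refinement of a point the paper handles separately in its first corollary, but it does not change the argument.
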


\begin{proof} We have,
\begin{align*} 
\nu^{(l)} &(\{ {\bf{x}} \in \E^{(l)} : ||{\bf{x}}|| > 1 \}) = \nu^{(l)}(\{ {\bf{x}} \in \E^{(l)} : x^{(l)}||\frac{{\bf{x}}}{x^{(l)}}|| > 1 \})\\
&= \nu_{\alpha^{(l)}} \times S^{(l)}(\{ (r, {\boldsymbol{\theta}} ) \in (0, \infty] \times \delta \aleph^{(l)} : r ||{\boldsymbol{\theta}} || > 1 \})\\
&= \int_{\delta \aleph^{(l)}}   \nu_{\alpha^{(l)}}(\{ r \in (0, \infty]  : r  > 1/||{\boldsymbol{\theta}} || \})S^{(l)}(d{\boldsymbol{\theta}}) 
= \int_{\delta \aleph^{(l)}} {||{\boldsymbol{\theta}}||}^{\alpha^{(l)}} S^{(l)}(d{\boldsymbol{\theta}}).
\end{align*}
Hence, the result follows.
\end{proof}

The following  corollaries  translate the condition of Theorem
\ref{finite_infinite_thm1} 
to the transformed hidden angular measure $\tilde S^{(l)}$.

\begin{cor}
\rm{ If $\nu^{(l)}( \{ {\bf{x}} \in \E^{(l)} : x^{(l)} \ge 1, x^{(1)} = \infty \} ) > 0$, then $\nu^{(l)}(\{ {\bf{x}} \in \E^{(l)} : ||{\bf{x}}|| > 1 \})$ is infinite.
}
\end{cor}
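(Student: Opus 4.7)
The plan is to combine the hypothesis with the integral representation from Theorem \ref{finite_infinite_thm1}. The key observation is that the hypothesis forces $S^{(l)}$ to charge a set on which $\|\boldsymbol{\theta}\|$ is literally infinite, so the moment integral in \eqref{norm_finite_condn} must diverge.

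First, I would translate the hypothesis into a statement about the hidden angular measure $S^{(l)}$. Using relation \eqref{nu0ls0l} with $r = 1$ and $\Lambda = \{\boldsymbol{\theta} \in \delta \aleph^{(l)} : \theta^{(1)} = \infty\}$, the assumption
$$\nu^{(l)}(\{\mathbf{x} \in \E^{(l)} : x^{(l)} \ge 1,\, x^{(1)} = \infty\}) > 0$$
translates to $S^{(l)}(\Lambda) > 0$.

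Next, I would observe that on $\Lambda$ every norm blows up: since $\theta^{(1)} = \infty$ and all norms on $\mathbb{R}^d$ dominate the sup-norm (up to a constant), we have $\|\boldsymbol{\theta}\| = \infty$ for all $\boldsymbol{\theta} \in \Lambda$. Then by Theorem \ref{finite_infinite_thm1},
\begin{align*}
\nu^{(l)}(\{\mathbf{x} \in \E^{(l)} : \|\mathbf{x}\| > 1\})
&= \int_{\delta \aleph^{(l)}} \|\boldsymbol{\theta}\|^{\alpha^{(l)}} S^{(l)}(d\boldsymbol{\theta}) \\
&\ge \int_\Lambda \|\boldsymbol{\theta}\|^{\alpha^{(l)}} S^{(l)}(d\boldsymbol{\theta})
= \infty \cdot S^{(l)}(\Lambda) = \infty,
\end{align*}
which gives the conclusion. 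There is essentially no obstacle here; the only point that requires minor care is the conventional interpretation of $\infty \cdot S^{(l)}(\Lambda)$, which is legitimate since $S^{(l)}(\Lambda) > 0$.
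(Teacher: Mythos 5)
Your proposal is correct and follows essentially the same route as the paper: both translate the hypothesis via \eqref{nu0ls0l} into $S^{(l)}(\{\boldsymbol{\theta} \in \delta\aleph^{(l)} : \theta^{(1)} = \infty\}) > 0$ and then invoke the moment condition of Theorem \ref{finite_infinite_thm1}, whose integrand is infinite on that set. No gaps.
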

\begin{proof} Observe, if we denote the largest component of ${\boldsymbol{\theta}}$ as ${\theta}^{(1)}$, we get
\begin{equation*}
S^{(l)}(\{ {\boldsymbol{\theta}} \in \delta \aleph^{(l)} : {\theta}^{(1)} = \infty \}) = \nu^{(l)}( \{ {\bf{x}} \in \E^{(l)} : x^{(l)} \ge 1, x^{(1)} = \infty \} ) > 0. 
\end{equation*}
Hence, the result follows from Theorem \ref{finite_infinite_thm1}.
\end{proof}

\begin{cor}
\rm{Suppose, $\nu^{(l)}( \{ {\bf{x}} \in \E^{(l)} : x^{(l)} \ge 1, x^{(1)} = \infty \} ) = 0$. Then, $\nu^{(l)}(\{ {\bf{x}} \in \E^{(l)} : ||{\bf{x}}|| > 1 \})$ is finite iff
\begin{equation}\label{norm_finite_condn_on_simplex}
\int_{D_2^{(l)}} {\left(\frac{ ||(1 - \sum_{i=1}^{d-1} s^i, s^1, s^2, \cdots, s^{d-1})||}{\phi^{(l)}(s^1, s^2, \cdots, s^{d-1})} \right)}^{\alpha^{(l)}} \tilde S^{(l)}(d{\bf{s}}) < \infty,
\end{equation}
where $\phi^{(l)}$ and $D^{(l)}_2$ are defined in \eqref{define_phi} and \eqref{d2l_range_of_tl} respectively.
}
\end{cor}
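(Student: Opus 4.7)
The plan is to reduce the corollary to the moment condition of Theorem~\ref{finite_infinite_thm1} via a pushforward/change-of-variables argument under the transformation $T^{(l)}$. Since Theorem~\ref{finite_infinite_thm1} already tells us that finiteness of $\nu^{(l)}(\{\mathbf{x}\in\E^{(l)}:\|\mathbf{x}\|>1\})$ is equivalent to $\int_{\delta\aleph^{(l)}}\|\boldsymbol{\theta}\|^{\alpha^{(l)}}\,S^{(l)}(d\boldsymbol{\theta})<\infty$, the entire task is to rewrite this integral as an integral of an explicit function against $\tilde S^{(l)}$ on $D_2^{(l)}$.

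First I would use the hypothesis $\nu^{(l)}(\{\mathbf{x}\in\E^{(l)}:x^{(l)}\ge 1,\,x^{(1)}=\infty\})=0$. By the identification of $S^{(l)}$ as the law of $\mathbf{x}/x^{(l)}$ conditional on $x^{(l)}\ge 1$ (relation~\eqref{nu0ls0l}), this hypothesis says exactly that $S^{(l)}(\{\boldsymbol{\theta}\in\delta\aleph^{(l)}:\theta^{(1)}=\infty\})=0$, i.e.\ $S^{(l)}(D_1^{(l)})=1$ with $D_1^{(l)}$ as in~\eqref{d1l_define}. Therefore the integral in Theorem~\ref{finite_infinite_thm1} can be restricted to $D_1^{(l)}$ without loss.

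Next I would apply the change of variables $\boldsymbol{\theta}=T^{(l)-1}(\mathbf{s})$. Because $T^{(l)}$ is a bijection between $D_1^{(l)}$ and $D_2^{(l)}$ with inverse given explicitly by~\eqref{define_inv_tl}, and because $\tilde S^{(l)}=S^{(l)}\circ T^{(l)-1}$ is the pushforward, the standard pushforward formula yields
\begin{equation*}
\int_{D_1^{(l)}}\|\boldsymbol{\theta}\|^{\alpha^{(l)}}\,S^{(l)}(d\boldsymbol{\theta})
=\int_{D_2^{(l)}}\bigl\|T^{(l)-1}(\mathbf{s})\bigr\|^{\alpha^{(l)}}\,\tilde S^{(l)}(d\mathbf{s}).
\end{equation*}
Plugging in the explicit form of $T^{(l)-1}$ from~\eqref{define_inv_tl} gives
\begin{equation*}
\bigl\|T^{(l)-1}(\mathbf{s})\bigr\|
=\frac{\bigl\|(1-\sum_{i=1}^{d-1}s^i,\,s^1,\dots,s^{d-1})\bigr\|}{\phi^{(l)}(s^1,\dots,s^{d-1})},
\end{equation*}
which is precisely the integrand in~\eqref{norm_finite_condn_on_simplex}. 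Combined with Theorem~\ref{finite_infinite_thm1}, this proves the equivalence.

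There is essentially no hard step: the only thing worth being careful about is verifying that the hypothesis really does force $S^{(l)}$ to be concentrated on $D_1^{(l)}$ (so that the pushforward formula applies without any boundary correction at $x^{(1)}=\infty$) and that the extended definition of $T^{(l)}$ on the null set ${D_1^{(l)}}^c$ is immaterial. Both are immediate from~\eqref{nu0ls0l} and the computation already used in proving $\tilde S^{(l)}(D_2^{(l)})=1$ in Section~\ref{diff_par}.
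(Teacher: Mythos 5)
Your proposal is correct and follows essentially the same route as the paper: the paper's proof likewise notes that the hypothesis gives $S^{(l)}({D_1^{(l)}}^c)=0$ and then applies the change-of-variable formula for the almost surely one-one map $T^{(l)}$ to the moment condition of Theorem~\ref{finite_infinite_thm1}. Your version simply spells out the pushforward identity and the explicit evaluation of $\bigl\|{T^{(l)}}^{-1}(\mathbf{s})\bigr\|$ that the paper leaves implicit.
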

\begin{proof}
The condition $S^{(l)}({D_1^{(l)}}^c) = \nu^{(l)}( \{ {\bf{x}} \in \E^{(l)} : x^{(l)} \ge 1, x^{(1)} = \infty \} ) = 0,$ where $D_1^{(l)}$ is defined in \eqref{d1l_define}, allows us to apply the change of variable formula to \eqref{norm_finite_condn} using the almost surely one-one transformation $T^{(l)}$ as in \eqref{define_tl}. Now, the result follows from Theorem \ref{finite_infinite_thm1}.
\end{proof}

Choosing the $L_1$-norm in \eqref{norm_finite_condn_on_simplex}, we get the simple condition: $\nu^{(l)}(\{ {\bf{x}} \in \E^{(l)} : ||{\bf{x}}|| > 1 \})<\infty$  iff
\begin{equation*}
\int_{D_2^{(l)}} {\left(\phi^{(l)}(s^1, s^2, \cdots, s^{d-1}) \right)}^{-\alpha^{(l)}} \tilde S^{(l)}(d{\bf{s}}) < \infty.
\end{equation*}

\subsection{A particular construction} We  defined HRV on a series
of sub-cones $\E \supset \E^{(2)} \supset \E^{(3)} \supset \cdots
\supset \E^{(d)}$, and discussed the finiteness condition in Theorem
\ref{finite_infinite_thm1} for each of the limit measures $\nu^{(l)},
\hskip 0.1 cm 2 \le l \le d$. A natural question is if for some $2
\le j < l \le d$, HRV exists on both the 
cones $\E^{(j)}$ and $\E^{(l)}$, does finiteness of $\nu^{(j)}(\{
{\bf{x}} \in \E^{(j)} : ||{\bf{x}}|| > 1 \})$ imply finiteness of
$\nu^{(l)}(\{ {\bf{x}} \in \E^{(l)} : ||{\bf{x}}|| > 1 \})$ or vice
versa? We construct an example to show that there are no such
implications.

\begin{ex}
\rm{
Suppose, $X_i, \hskip 0.1 cm i = 1, 2, \cdots, d$ are iid Pareto(1). Also, assume $R_i, \hskip 0.1 cm i = 2, 3, \cdots, d$ are mutually independent random variables with $R_i$ having distribution Pareto$\left( \frac{i(i+1)}{2i +1} \right)$. Now, for each $2 \le l \le d$, define a set of mutually independent random variables ${\bf{s}}_i, \hskip 0.1 cm i =2, 3, \cdots, d$, such that ${\bf{s}}_i$ has a distribution $\bar S^{(i)}$ on $\{ {\bf{x}} \in D_2^{(i)} : x^{i} = x^{i+1} =  \cdots = x^{d-1}= 0 \}$, where $D_2^{(i)}$ is defined in 
\eqref{d2l_range_of_tl}. Also, assume that $(X_i, i =1, 2, \cdots, d)$, $(R_i, i = 2, 3, \cdots, d)$ and $({\bf{s}}_i, \hskip 0.1 cm i =2, 3, \cdots, d)$ are independent of each other. Note that even though we have restricted the supports of the probability measures $\bar S^{(i)}$, we still have the flexibility to choose them in a way so that \eqref{norm_finite_condn_on_simplex} is satisfied or not, depending on whether we want to make $\nu^{(i)}(\{ {\bf{x}} \in \E^{(i)} : ||{\bf{x}}|| > 1 \})$ finite or infinite.

Now, let $(B_1, B_2, \cdots, B_d)$ be another set of random variables independent of all the previous random variables such that $P[ B_i = 1] = 1 - P[ B_i = 0] = \frac{1}{d}$ and $\sum_{i=1}^d B_i =1$. Recall  the definition of the transformation ${T^{(l)}}^{-1}$ from \eqref{define_inv_tl}, which maps points from $D_2^{(l)}$ to $\delta \aleph^{(l)} = \{ {\bf{x}} \in \E^{(l)} : x^{(l)} = 1 \}$. Note that, the range of ${T^{(l)}}^{-1}$ is $D_1^{(l)},$ where $D_1^{(l)}$ is defined in \eqref{d1l_define}. Now, define the random vector ${\bf{Z}}$ as
\begin{align*}
{\bf{Z}} &= (Z^1, Z^2, \cdots, Z^d) \\
&= B_1(X_1, X_2, \cdots, X_d) + B_2R_2{T^{(2)}}^{-1}({\bf{s}}_2) + B_3R_3{T^{(3)}}^{-1}({\bf{s}}_3) + \cdots + B_dR_d{T^{(d)}}^{-1}({\bf{s}}_d).
\end{align*}
Since the range of ${T^{(l)}}^{-1}$ is $D_1^{(l)}$, all the components
of ${T^{(l)}}^{-1}({\bf{s}}_l)$ are finite, $2 \le l \le d$, and hence
all the components of ${\bf{Z}}$ are $[0, \infty)$-valued. Also, 
\begin{equation*}
 tP\left[ {{\bf{Z}}}/{t} \in \cdot \,\right] \stackrel{v}{\rightarrow}  \nu (\cdot) \quad \text{in $M_+(\E)$},
\end{equation*}
where $\nu( [0, \infty] \times \cdots \times [0, \infty] \times (u,
\infty] \times [0, \infty] \times \cdots \times [0, \infty]) =
{(d\cdot u)}^{-1}$, where $(u, \infty]$ is in the $i$-th position and
this holds for all $1 \le i \le d$. Also, $\nu(\E^{(2)}) = 0.$ Notice,
for each $2 \le l \le d$, the parameter of the distribution of $R_l$
is chosen in such a way that HRV of $(X_1, X_2,
\cdots, X_d)$ on $\E^{(l)}$ or regular variation of
$R_p{T^{(p)}}^{-1}({\bf{s}}_p)$ on $\E^{(l)}$, $l < p \le d$, does not
affect the HRV of ${\bf{Z}}$ on $\E^{(l)}$. Also,
by choosing the support of $\bar S^{(p)}$, $2 \le p \le d$, to be
concentrated on $\{ {\bf{x}} \in D_2^{(p)} : x^{p} = x^{p+1} =
\cdots = x^{d-1} = 0 \}$ we have ensured that
$R_p{T^{(p)}}^{-1}({\bf{s}}_p)$, $ 2 \le p < l$, would not have  any
HRV on the cone $\E^{(l)}$. So, the only part of
${\bf{Z}}$ contributing in HRV on $\E^{(l)}$ is
$R_l{T^{(l)}}^{-1}({\bf{s}}_l)$, and therefore, for $2 \le l \le d$
and $x > 0$, 
\begin{align*}
\lim_{t \to \infty} tP&\left[ \frac{Z^{(l)}}{{(t/d)}^{(2l +1)/l(l+1)}} > x, \frac{{\bf{Z}}}{Z^{(l)}} \in \cdot \right] = \lim_{t \to \infty}\frac{t}{d}P\left[ \frac{R_l }{{(t/d)}^{(2l +1)/l(l+1)}} > x, {T^{(l)}}^{-1}({\bf{s}}_l) \in \cdot \right]\\
&=  \lim_{t \to \infty}\frac{t}{d}P\left[ \frac{R_l }{{(t/d)}^{(2l +1)/l(l+1)}} > x\right] P\left[ {T^{(l)}}^{-1}({\bf{s}}_l) \in \cdot \right]\\
&=  \lim_{t \to \infty}\frac{t}{d}{\left({(t/d)}^{(2l +1)/l(l+1)}x
  \right)}^{- l(l+1)/(2l+1)} P\left[ {T^{(l)}}^{-1}({\bf{s}}_l) \in
  \cdot \right]
\\
&= x^{- l(l+1)/(2l+1)} P\left[ {T^{(l)}}^{-1}({\bf{s}}_l) \in \cdot \right].
\end{align*}
Hence, following Proposition \ref{stan_transform}, for $2 \le l \le d$, ${\bf{Z}}$ has regular variation on the cone $\E^{(l)}$ with scaling function $b^{(l)}(t) = {(t/d)}^{(2l +1)/l(l+1)}$, $\alpha^{(l)} = l(l+1)/(2l+1)$ and
hidden spectral measure $S^{(l)}(\cdot) =  P\left[ {T^{(l)}}^{-1}({\bf{s}}_l) \in \cdot \right]$. Also, notice
$1/ \alpha^{(l)} = \left( \frac{1}{l} + \frac{1}{l+1} \right)$ is a decreasing function in $l$, which indeed confirms that for $2 \le j < l \le d$, $b^{(j)}(t)/ b^{(l)}(t) \to \infty$, which is a required condition for HRV on $\E^{(l)}$. So, ${\bf{Z}}$ has HRV on the each of the cones $\E^{(l)}$ with the limit measure $\nu^{(l)},\hskip 0.1 cm 2 \le l \le d$. Now, we look for the transformed hidden spectral measure $\tilde S^{(l)}$ for the limit measure $\nu^{(l)}$ and show that it indeed coincides with $\bar S^{(l)}$.

Since the hidden spectral measure $S^{(l)}$ has been defined through the function ${T^{(l)}}^{-1}$ which has range $D_1^{(l)}$, we have $S^{(l)}(D_1^{(l)}) =1$, where $D_1^{(l)},$ where $D_1^{(l)}$ is defined in \eqref{d1l_define}. So, we get the transformed hidden spectral measure $\tilde S^{(l)}(\cdot)$ as $\tilde S^{(l)}(\cdot) = P\left[ {\bf{s}}_l \in \cdot \right]$. So, this hidden transformed spectral measure $\tilde S^{(l)}$ matches with our earlier $\bar S^{(l)}$.
Following the comments made before about $\bar S^{(l)}$, we have the flexibility to choose $\tilde S^{(l)}$ in such a way that \eqref{norm_finite_condn_on_simplex} is satisfied or not, and this could be done independently for each $2 \le l \le d$. So, this example shows that we could construct a random variable which has regular variation on each of the cones $\E^{(l)}$ with limit measure $\nu^{(l)}, \hskip 0.1 cm 2 \le l \le d$,  and for each $ 2 \le l \le d$, we could independently choose to make $\nu^{(l)}(\{ {\bf{x}} \in \E^{(l)} : ||{\bf{x}}|| > 1 \})$ finite or infinite. Therefore, for $2 \le j < l \le d$, neither $\nu^{(j)}(\{ {\bf{x}} \in \E^{(j)} : ||{\bf{x}}|| > 1 \})$ is finite implies $\nu^{(l)}(\{ {\bf{x}} \in \E^{(l)} : ||{\bf{x}}|| > 1 \})$ is finite , nor the reverse is true.
}
\end{ex}

\section{Computation of probabilities of risk sets}\label{risksetcomputation}
In this section, we consider two risk regions and illustrate how HRV helps obtain
more accurate estimates of probabilities of risk sets.

\subsection{At least one risk is large.}
One scenario has ${\bf{Z}} =
(Z^1, Z^2, \cdots, Z^d)$ representing risks such as pollutant
concentrations at $d$ sites \cite{heffernan:resnick:2005}. A critical risk level, such as
  pollutant concentration $t^i$ ($i = 1, 2, \cdots,
d$) at
the $i$-th site, is set by a government agency.
 Exceeding $t^i$ for some $i$
results in a fine and the event
{\it non-compliance\/} is represented by $ \cup_{i=1}^d [ Z^i > t^i]$.
The probability of non-compliance is,
\begin{align*}
P[ \hbox{non-compliance}] &= P[ \cup_{i=1}^d \{ Z^i > t^i \} ] = \sum_iP[Z^i > t^i] - \sum_{ 1 \le i_1 < i_2 \le d} P[Z^{i_1} > t^{i_1}, Z^{i_2} > t^{i_2}]\\
& \quad + \cdots + {(-1)}^{(j-1)}\sum_{1 \le i_1 < i_2 < \cdots i_j \le d} P[Z^{i_1} > t^{i_1}, Z^{i_2} > t^{i_2}, \cdots, Z^{i_j} > t^{i_j} ]\\
& \qquad + \cdots + {(-1)}^{(d-1)}P[Z^{1} > t^{1}, Z^{2} > t^{2}, \cdots, Z^{d} > t^{d} ].
\end{align*}
Suppose, ${\bf{Z}}, {\bf{Z}}_1, {\bf{Z}}_2, \cdots, {\bf{Z}}_n$ are
iid random vectors whose common distribution, for simplicity, is
  assumed standard regularly  varying on $\E = \E^{(1)}$ with scaling
  function $b(t) = b^{(1)}(t)$ as in \eqref{stan_reg_var}.  Assume HRV
  holds on each of the cones $\E^{(l)}$ with scaling 
  function $b^{(l)}(t)$ as in \eqref{reg_var_on_E0l}, $2 \le l \le
  d$. Since asymptotic independence is present, relying only on
    regular variation on $\E$ means all the interaction terms in the
    inclusion-exclusion formula are estimated to be 0 but HRV improves
  on this.

Estimating $P[ Z^i > t^i ], \hskip 0.1 cm 1 \le i \le d,$ is a 
  standard procedure, perhaps using peaks over threshold  and
    maximum likelihood; see \cite[ page
  141]{dehaan:ferreira:2006}, \cite{coles:2001}. 
For $2 \le j \le d$,   $1 \le i_1 < i_2 < \cdots i_j \le d$, 
 large $k$ and large $n/k$, the
  probability $P[Z^{i_1} > t^{i_1}, \cdots, Z^{i_j} > t^{i_j} ]$ 
is  approximated using HRV on $\E^{(j)}$ by 
\begin{align}\label{risk_set_compute_firstapprox}
P[Z^{i_1} > t^{i_1}, \cdots, Z^{i_j} > t^{i_j} ] &= P\left[\frac{Z^{i_1}}{b^{(j)}(n/k)} > \frac{t^{i_1}}{b^{(j)}(n/k)}, \cdots, \frac{Z^{i_j}}{b^{(j)}(n/k)} > \frac{t^{i_j}}{b^{(j)}(n/k)} \right] \nonumber \\
& \approx \frac{k}{n}\nu^{(j)} \left( \left\{ {\bf{x}} \in \E^{(j)}: x^{i_1} > \frac{t^{i_1}}{b^{(j)}(n/k)}, \cdots, x^{i_j} > \frac{t^{i_j}}{b^{(j)}(n/k)} \right\} \right).
\end{align}
We need to estimate  $\nu^{(j)}$ and $b^{(j)}(n/k)$.
 Notice that, for $w^1, \cdots, w^j > 0$,
\begin{align}\label{risk_set_compute_semi}
\nu^{(j)} &\left( \left\{ {\bf{x}} \in \E^{(j)}: x^{i_1} > w^1,
    \cdots, x^{i_j} > w^j \right\} \right)
 \nonumber\\
&= \nu_{\alpha^{(j)}} \times S^{(j)}\left( \left\{ (r,{\boldsymbol{\theta}}) \in (0, \infty] \times  \delta \aleph^{(j)} : r{\theta}^{i_1} > w^1, \cdots, r{\theta}^{i_j} > w^j \right\}\right)\nonumber \\
&= \int_{\delta \aleph^{(j)}}{\left(\vee_{p=1}^j \frac{w^{i_p}}{{\theta}^{i_p}} \right)}^{- \alpha^{(j)}}S^{(j)}(d{\boldsymbol{\theta}}).
\end{align}
Using \eqref{estimatorb2n/k}, we get
\begin{equation}\label{estimatorbjn/k}
Z^{(j)}_{(k)}{\big{/}} b^{(j)}\left(n/k\right) \stackrel{P}{\rightarrow} 1,
\end{equation}
and thus we use $Z^{(j)}_{(k)}$ as an estimator of
$b^{(j)}\left(n/k\right)$. From
 \eqref{risk_set_compute_firstapprox}, \eqref{risk_set_compute_semi}
 and \eqref{estimatorbjn/k}, we approximate  $P[Z^{i_1} > t^{i_1},
 \cdots, Z^{i_j} > t^{i_j} ]$ as 
\begin{equation*}
P[Z^{i_1} > t^{i_1}, \cdots, Z^{i_j} > t^{i_j} ] \approx \frac{k}{n} \int_{\delta \aleph^{(j)}}{\left(\bigvee_{p=1}^j \frac{t^{i_p}}{Z^{(j)}_{(k)}{\theta}^{i_p}} \right)}^{- \hat \alpha^{(j)}} \hat S^{(j)}(d{\boldsymbol{\theta}}),
\end{equation*}
where $\hat \alpha^{(j)}$ and $\hat S^{(j)}$ are the consistent estimates of $\alpha^{(j)}$ and $S^{(j)}$ obtained in Section \ref{E0l_stan_est_proc_sec}. 

\subsection{Linear combination of risks.}
A second kind of risk set used in hydrology
 \citep{bruun:tawn:1998, dehaan:deronde:1998} is
 of the form $\{ {\bf{x}} \in \E: \gamma_1x^1 + \gamma_2x^2 + \cdots +
 \gamma_dx^d > y \}$ for  $\gamma_i > 0, \hskip 0.1 cm i =1, 2,
 \cdots, d$ and $y > 0$. Here the risks could be wind speed and wave
 height and a linear combination represents dike exceedance. Assume for simplicity
 $d=2$ and  note
\begin{align}\label{lin_comb_decomp_eqn}
P[ \gamma_1Z^1 + \gamma_2Z^2 > y ] &= P[ \gamma_1Z^1 > y ] + P[ \gamma_2Z^2 > y ] -  P[ \gamma_1Z^1 > y,  \gamma_2Z^2 > y ] \nonumber \\
&\qquad + P[ \gamma_1Z^1 + \gamma_2Z^2 > y,  \gamma_1Z^1 \le y,  \gamma_2Z^2 \le y ] .
\end{align}
Suppose, ${\bf{Z}}, {\bf{Z}}_1, {\bf{Z}}_2, \cdots, {\bf{Z}}_n$ are
iid  vectors whose common distribution
 has non-standard regular variation on $\E =
\E^{(1)}$ as in \eqref{non-stan_reg_var_onE_stan} and HRV
 on $\E^{(2)}$ with scaling function $b^{(2)}(t)$ as
in \eqref{non-stan_hidden_reg_var_onE02}. 
Asymptotic independence holds and thus 
  regular variation on $\E$ estimates the last two terms
on the  right hand side of \eqref{lin_comb_decomp_eqn} 
as zero. This
is crude and HRV should improve the risk estimate.

As in the previous scenario, 
estimating $P[ \gamma_iZ^i > y ], \, i= 1, 2,$ using
\eqref{mar_in_non-stan_case} is standard and we
proceed to estimate $P[
\gamma_1Z^1 > y,  \gamma_2Z^2 > y ].$ From Section 2.3 in
\cite{heffernan:resnick:2005}, we have
\eqref{non-stan_hidden_reg_var_onE02} equivalent to  
\begin{equation}\label{equi_non-stan_condn}
tP\left[ \left( \frac{ Z^j}{a^j(b^{(2)}(t))}, j =1,2 \right) \in \cdot \,
\right] \stackrel{v}{\rightarrow} \tilde \nu^{(2)} (\cdot) \quad \text{in $M_+(\E^{(2)})$},
\end{equation}
where $\tilde \nu^{(2)}$ and $\nu^{(2)}$ are related by 
\begin{equation}\label{equi_nonstan_limit_meas_condn}
\tilde \nu^{(2)}(({\bf{x}}, {\boldsymbol{\infty}}]) = \nu^{(2)}(({\bf{x}^{\boldsymbol{\beta}}}, {\boldsymbol{\infty}}]), \hskip 0.2 cm {\bf{x}} \in \E^{(2)},
\end{equation} 
where ${\boldsymbol{\beta}} = (\beta^1, \beta^2)$ and $\beta^j, \hskip
0.1 cm j = 1,2,$ is the marginal index of regular variation
defined in  
\eqref{mar_in_non-stan_case}. Using \eqref{equi_non-stan_condn} and 
\eqref{equi_nonstan_limit_meas_condn}, we approximate $P[ \gamma_1Z^1 > y,
\gamma_2Z^2 > y ]$  as 
\begin{align}\label{non-stan_risk_set_compute_firstapprox}
P[\gamma_1Z^{1} > y, \gamma_2Z^{2} &> y ] = P\left[\frac{Z^1}{a^1(b^{(2)}(n/k))} > \frac{y}{\gamma_1a^1(b^{(2)}(n/k))}, \frac{Z^2}{a^2(b^{(2)}(n/k))} > \frac{y}{\gamma_2a^2(b^{(j)}(n/k))} \right] \nonumber \\
& \approx \frac{k}{n} \tilde \nu^{(2)} \left( \left\{ {\bf{x}} \in \E^{(2)}: x^1 > \frac{y}{\gamma_1a^1(b^{(2)}(n/k))}, x^2 > \frac{y}{\gamma_2a^2(b^{(j)}(n/k))} \right\} \right) \nonumber\\
& = \frac{k}{n} \nu^{(2)} \left( \left\{ {\bf{x}} \in \E^{(2)}: x^1> {\left(\frac{y}{\gamma_1a^1(b^{(j)}(n/k))}\right)}^{\beta^1} , x^2 > {\left(\frac{y}{\gamma_2a^2(b^{(j)}(n/k))}\right)}^{\beta^2} \right\} \right).
\end{align}

We require estimates of $\nu^{(2)}, \beta^i$
and $a^i(b^{(2)}(n/k)), \hskip 0.1 cm i =1,2.$ There are standard
methods for estimating one dimensional indices $\beta^i,
 i = 1, 2,$ based on \eqref{mar_in_non-stan_case} 
(\cite[Chapter 4]{resnickbook:2007}, \cite{coles:2001, dehaan:ferreira:2006})
which yield consistent estimators $\hat \beta^i,
i = 1, 2$. For
$\nu^{(2)},$ observe,
\begin{align}\label{non-stan_risk_set_compute_semi} 
\nu^{(2)} \Bigl( \Bigl\{ {\bf{x}} \in \E^{(2)}& : x^1 > w^1, x^2 > w^2
  \Bigr\} \Bigr) 
\nonumber\\ 
&= \nu_{\alpha^{(2)}} \times S^{(2)}\left( \left\{
    (r,{\boldsymbol{\theta}}) \in (0, \infty] \times  \delta
    \aleph^{(2)} : r{\theta}^1 > w^1, r{\theta}^2 > w^2
  \right\}\right)
\nonumber \\ 
&= \int_{\delta \aleph^{(2)}}{\left(\frac{w^1}{{\theta}^1}\bigvee \frac{w^2}{{\theta}^2}
  \right)}^{- \alpha^{(2)}}S^{(2)}(d{\boldsymbol{\theta}}), \qquad
\qquad (w^1, w^2 > 0). 
\end{align}

Also, from Section 4.3 in \cite{heffernan:resnick:2005}, we get
\begin{equation}\label{non-stan_estimatorbjn/k}
Z^j_{(\lceil 1/m^{(2)}_{(k)} \rceil ) } {\big{/}} a^j\left(b^{(2)}\left(n/k \right)\right) \stackrel{P}{\rightarrow} 1,
\end{equation}
where $Z^j_{(\lceil 1/m^{(2)}_{(k)} \rceil ) }$ is the $\lceil
1/m^{(2)}_{(k)} \rceil $-th largest order statistic of the $j$-th
components of ${\bf{Z}}_i, \hskip 0.1 cm i=1, 2, \cdots, n.$ So, we
 use $Z^j_{(\lceil 1/m^{(2)}_{(k)} \rceil ) }$ as an estimator of
 $a^j\left(b^{(2)}\left(n/k\right)\right), \hskip 0.2 cm j =1,
 2$. Finally, using \eqref{non-stan_risk_set_compute_firstapprox},
 \eqref{non-stan_risk_set_compute_semi} and
 \eqref{non-stan_estimatorbjn/k}, we approximate  $P[\gamma_1Z^1 > y,
 \gamma_2Z^2 > y ]$ as 
\begin{equation}\label{finalrisksetestimator}
P[\gamma_1Z^1 > y, \gamma_2Z^2 > y ] \approx \frac{k}{n} \int_{\delta \aleph^{(2)}}{\left(\vee_{p=1}^2 \frac{1}{\theta^p} \cdot {\left( \frac{y}{\gamma_p Z^p_{(\lceil 1/m^{(2)}_{(k)} \rceil ) }} \right)}^{\hat \beta^p} \right)}^{- \hat \alpha^{(2)}} \hat S^{(2)}(d{\bf{\theta}}),
\end{equation}
where $\hat \alpha^{(2)}$ and $\hat S^{(2)}$ are consistent estimates of $\alpha^{(2)}$ and $S^{(2)}$  obtained in Section \ref{detect_nonstan_e02}.

Estimation of the fourth term of the right side of
\eqref{lin_comb_decomp_eqn}
requires care. First, observe
\begin{align}\label{the_trouble_set_eqn1}
P[ &\gamma_1Z^1 + \gamma_2Z^2 > y,  \gamma_1Z^1 \le y,  \gamma_2Z^2 \le y ] \nonumber \\
&= P\left[ \gamma_1a^1(b^{(2)}(n/k)) \frac{Z^1}{a^1(b^{(2)}(n/k))} + \gamma_2a^2(b^{(2)}(n/k)) \frac{Z^2}{a^2(b^{(2)}(n/k))} > y, \right. \nonumber\\
&\hskip 2 cm    \left. \gamma_1a^1(b^{(2)}(n/k)) \frac{Z^1}{a^1(b^{(2)}(n/k))} \le y, \gamma_2a^2(b^{(2)}(n/k)) \frac{Z^2}{a^2(b^{(2)}(n/k))} \le y \right] \nonumber\\
& \approx \frac{k}{n} \tilde \nu^{(2)}(\{ {\bf{x}} \in \E :
\gamma_1a^1(b^{(2)}(n/k)) x^1 + \gamma_2a^2(b^{(2)}(n/k)) x^2 > y, 
\nonumber\\
&\qquad \qquad \gamma_1a^1(b^{(2)}(n/k)) x^1 \le y, \gamma_2a^2(b^{(2)}(n/k)) x^2 \le y\} )\nonumber\\
& \approx \frac{k}{n} \tilde \nu^{(2)}(\{ {\bf{x}} \in \E :
\gamma_1Z^1_{(\lceil 1/m^{(2)}_{(k)} \rceil ) } x^1 +
\gamma_2Z^2_{(\lceil 1/m^{(2)}_{(k)} \rceil ) } x^2 > y, 
\gamma_1Z^1_{(\lceil 1/m^{(2)}_{(k)} \rceil ) } x^1 \vee \gamma_2Z^2_{(\lceil 1/m^{(2)}_{(k)} \rceil ) } x^2 \le y\} ).
\end{align}
In the last approximation in \eqref{the_trouble_set_eqn1}, $a^j\left(b^{(2)}\left(n/k \right)\right)$ is replaced by $Z^j_{(\lceil 1/m^{(2)}_{(k)} \rceil )}$, $j=1, 2,$ using \eqref{non-stan_estimatorbjn/k}. 

For  $\phi_1, \phi_2 > 0,$ the set $\{ {\bf{x}} \in \E:
\phi_1x^1 + \phi_2x^2 > y, \phi_1x^1 \le y, \phi_2x^2 \le y \} \subset
\E^{(2)}$ is not a compact subset of $\E^{(2)}$, so, $\tilde
\nu^{(2)}(\{ {\bf{x}} \in \E: \phi_1x^1 + \phi_2x^2 > y, \phi_1x^1 \le
y, \phi_2x^2 \le y \})$ could be infinite in which case
it is not clear how  HRV can
 refine the estimate of $P[
\gamma_1Z^1 + \gamma_2Z^2 > y,  \gamma_1Z^1 \le y,  \gamma_2Z^2 \le y
]$. So, we must check  finiteness of the  quantity on the
right side of \eqref{the_trouble_set_eqn1}.  

Set
 $\phi_j = \gamma_jZ^j_{(\lceil 1/m^{(2)}_{(k)} \rceil ) },j =1,2$ and
define  $A:=\{ {\bf{x}} \in \E:
 \phi_1x^1 + \phi_2x^2 > y, \phi_1x^1 \le y, \phi_2x^2 \le y \}$. Using \eqref{equi_nonstan_limit_meas_condn}  and following
 similar methods as in \eqref{risk_set_compute_semi}, we get 
\begin{align}\label{chk_risk_set_finite_eqn}
\tilde \nu^{(2)}(\{ {\bf{x}} \in \E: &\phi_1x^1 + \phi_2x^2 > y, \phi_1x^1 \le y, \phi_2x^2 \le y \}) = \int_A \beta^1 \beta^2 (x^1)^{(\beta^1-1)}(x^2)^{(\beta^2-1)}\nu^{(2)}(d{\bf{x}}) \nonumber \\
&= \int_{\delta \aleph^{(2)}} \beta^1 \beta^2 (\theta^1)^{(\beta^1-1)}(\theta^2)^{(\beta^2-1)}\int_{y/(\phi_1\theta^1 + \phi_2\theta^2)}^{y /(\phi_1\theta^1 \vee \phi_2\theta^2)} r^{(\beta^1 +\beta^2 -2)} \nu_{\alpha^{(2)}}(dr) S^{(2)}(d{\boldsymbol{\theta}}) \nonumber \\
&= \int_{\delta \aleph^{(2)}} \frac{\beta^1 \beta^2}{\beta^1 + \beta^2 -\alpha^{(2)} -2} (\theta^1)^{(\beta^1-1)}(\theta^2)^{(\beta^2-1)} \nonumber \\
& \qquad \times \left[ {\left(\frac{y}{\phi_1\theta^1 + \phi_2\theta^2}\right)}^{(\beta^1 +\beta^2 - \alpha^{(2)} -2)} - {\left(\frac{y}{\phi_1\theta^1 \vee \phi_2\theta^2}\right)}^{(\beta^1 +\beta^2 - \alpha^{(2)} -2)}\right] S^{(2)}(d{\boldsymbol{\theta}}).
\end{align}
Finiteness of the quantity on the right hand side of
\eqref{the_trouble_set_eqn1} 
is equivalent to the finiteness of the quantity on the right
hand side of \eqref{chk_risk_set_finite_eqn} 
which is
difficult to verify; see 
\cite{heffernan:resnick:2005}.
This problem is inherent in estimation for this type of
risk region.

We proceed assuming the finiteness of $\tilde \nu^{(2)}(\{
{\bf{x}} \in E: \phi_1x^1 + \phi_2x^2 > y, \phi_1x^1 \le y, \phi_2x^2
\le y \})$. From \eqref{the_trouble_set_eqn1} and
\eqref{chk_risk_set_finite_eqn}, we get for large $k$ and $n/k$,
the  estimate,
\begin{align*}
P[ \gamma_1Z^1 + & \gamma_2Z^2 > y,  \gamma_1Z^1 \le y,  \gamma_2Z^2 \le y ] \approx \frac{k}{n} \int_{\delta \aleph^{(2)}} \frac{ \hat \beta^1 \hat \beta^2}{\hat \beta^1 + \hat \beta^2 - \hat \alpha^{(2)} -2} (\theta^1)^{(\hat \beta^1-1)}( \theta^2)^{( \hat \beta^2-1)} \\
&\times \left[ {\left(\frac{y}{\phi_1\theta^1 + \phi_2\theta^2}\right)}^{(\hat \beta^1 + \hat \beta^2 - \hat \alpha^{(2)} -2)} - {\left(\frac{y}{ \phi_1\theta^1 \vee \phi_2\theta^2}\right)}^{(\hat \beta^1 + \hat \beta^2 - \hat \alpha^{(2)} -2)}\right] \hat S^{(2)}(d{\boldsymbol{\theta}}),
\end{align*}
where $\hat \alpha^{(2)}$ and $\hat S^{(2)}$ are consistent estimates of $\alpha^{(2)}$ and $S^{(2)}$ obtained in Section \ref{detect_nonstan_e02}.

\section{Computational Examples}\label{sec:eg}
This section 
considers the performance of the estimation
procedure described in
Section \ref{risksetcomputation} on 
two data sets, one simulated and one consisting of Internet
  measurements.
We also compare performance with
Heffernan and Resnick \cite{heffernan:resnick:2005}.  

\subsection{Simulated data}\label{subsec:sim}
We simulated iid samples  $\{ (X_i, Y_i), i = 1, 2, \cdots, n =
5000.\}$, where $X_1 \sim \rm{Pareto}(1),$ $ Y_1 \sim \rm{Pareto}(2)$
and $X_1$ and $Y_1$ are independent. Therefore, using
\eqref{non-stan_hidden_reg_var_onE02} we get  
\begin{equation}\label{simulateddatanu2}
\nu^{(2)}\left( (x,y), \boldsymbol{\infty} \right] =
\frac{1}{xy},\qquad \qquad (x, y > 0),
\end{equation}
and $\alpha^{(2)} = 2$ and $\nu^{(2)}( \{ {\bf{x}} \in \E^{(2)}
: x^{(2)} \ge 1, x^{(1)} = \infty \} ) = 0.$ Using
\eqref{s_and_tildes}, we obtain  the transformed hidden spectral
measure $\tilde S^{(2)}$  
\begin{equation}\label{tildes_computation}
\tilde S^{(2)} (\cdot) = \nu^{(2)}\left(\left\{ {\bf{x}} \in \E^{(2)} : x^{(2)} \ge 1, \frac{x^2}{x^1 + x^2} \in \cdot \right\} \right).
\end{equation}
The density with respect to Lebesgue measure of $\tilde S^{(2)}$ is
\begin{equation}\label{actual_tr_sp_density}
f (s) = \left\{ \begin{array}{cc} \frac{1}{2} {(1-s)}^{-2}, & \hbox{ if $0 \le s < \frac{1}{2},$}\\
\frac{1}{2} s^{-2}, & \hbox{ if $\frac{1}{2} \le s \le 1,$ }\\
0 & \hbox{otherwise.} 
\end{array} \right.
\end{equation}

\begin{figure}
\centering
\scalebox{0.8}
{\includegraphics{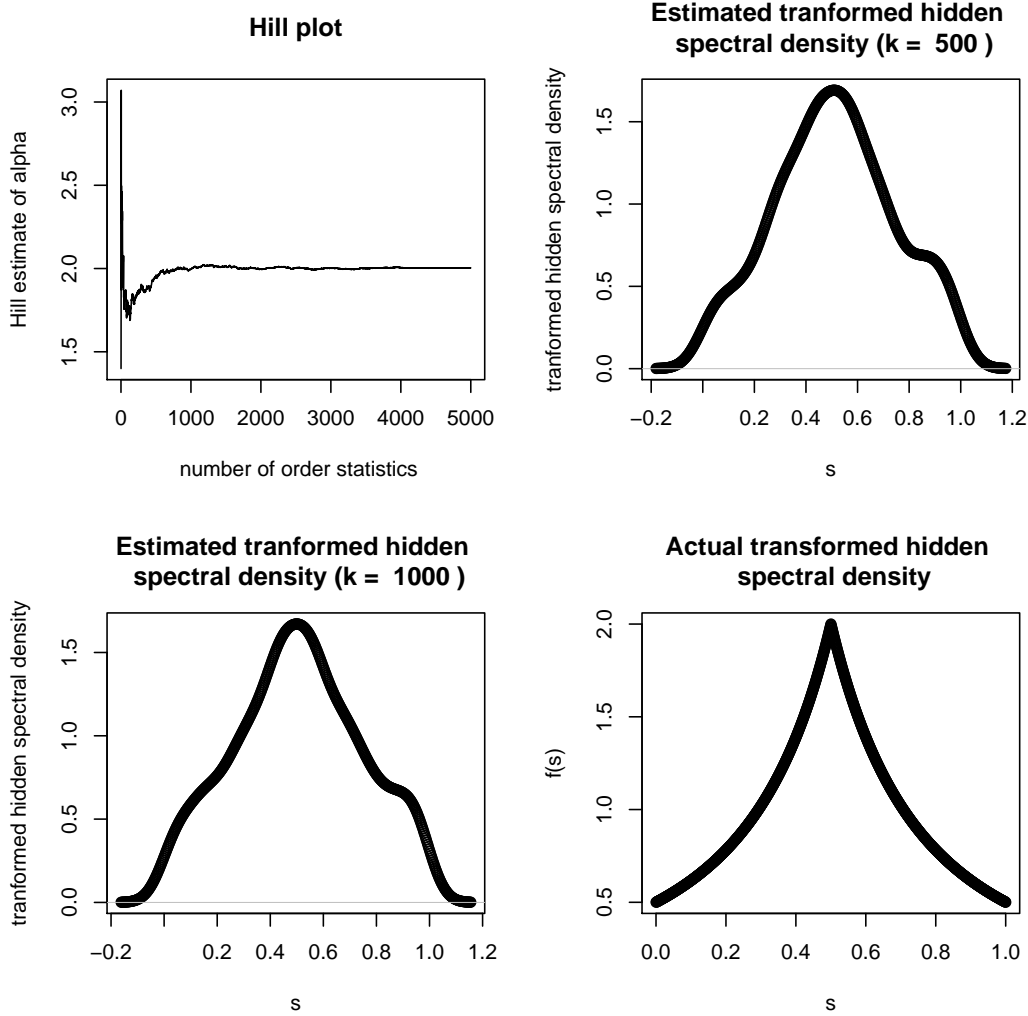}}
\caption{Hill plot of $\alpha^{(2)}$ and estimated and actual transformed hidden spectral densities }
\label{simulation_hill_spec}
\end{figure}

We test accuracy of
 our estimates of $\alpha^{(2)}$ and $\tilde S^{(2)}(\cdot)$.
 The Hill plot for $\{m^{(2)}_i, i = 1, 2, \cdots, n
\},$ the plot of the
estimated transformed hidden spectral densities for
$k = 500, 1000$, and the plot of the actual transformed hidden
spectral density  \eqref{actual_tr_sp_density} are shown in
Figure \ref{simulation_hill_spec}. 
We also estimate probabilities of risk sets of the form $P[ X_1
> t_1, Y_1 > t_2]$ for large thresholds $t_1$ and $t_2$. Using a
method similar to the one used to obtain
\eqref{finalrisksetestimator}, we estimate the probability $P[ X_1 >
t_1, Y_1 > t_2]$ as 
\begin{equation*}
P[X_1 > t_1, Y_1 > t_2 ] \approx \frac{k}{n} \int_{\delta
  \aleph^{(2)}}{\Bigl( \Bigl[ \frac{1}{\theta^1}  {\Bigl(
      \frac{t_1}{ X_{(\lceil 1/m^{(2)}_{(k)} \rceil ) }}
    \Bigr)}^{\hat \beta^1} \Bigr] \bigvee \Bigl[\frac{1}{\theta^2}
   {\Bigr( \frac{t_2}{ Y_{(\lceil 1/m^{(2)}_{(k)} \rceil ) }}
    \Bigr)}^{\hat \beta^2} \Bigr]\Bigr)}^{- \hat \alpha^{(2)}} \hat
S^{(2)}(d{\bf{\theta}}), 
\end{equation*}
where $X_{(1)} \ge X_{(2)} \ge \cdots \ge X_{(n)}$ and $Y_{(1)} \ge
Y_{(2)} \ge \cdots \ge Y_{(n)}$ are the order statistics for $\{X_i,
i=1, \cdots, n\}$ and $\{Y_i, i=1, \cdots, n\}$, and the remaining notation has the same
meaning as in \eqref{finalrisksetestimator}. Since 
we simulate the data we take $\hat
\beta_1 = 1$ and $\hat \beta_2 = 2$ and concentrate on estimating $\alpha^{(2)}$
using the Hill estimator and estimating $S^{(2)}$ by the formula given
in \eqref{non-stan_spec_est}. 
 We compute
the estimates of  $P[X_1 > 100, Y_1 > \sqrt{10}]$ for different values
of $k$ using these estimators and plot the graph in Figure \ref{risk_probability_plot}. The
range of $k$ is $k = 500$ to $k= 5000.$ 

A different estimate of $P[ X_1 > t_1, Y_1 > t_2]$ is given by Heffernan and Resnick
\cite{heffernan:resnick:2005}:
\begin{equation*}
P[X_1 > t_1, Y_1 > t_2 ] \approx \frac{k}{n} \hat \nu^{(2)} \Bigl( \Bigl( \Bigl( {\Bigl(  \frac{t_1}{ X_{(\lceil 1/m^{(2)}_{(k)} \rceil ) }}  \Bigr)}^{\hat \beta^1}, {\Bigl( \frac{t_2}{ Y_{(\lceil 1/m^{(2)}_{(k)} \rceil ) }}  \Bigr)}^{\hat \beta^2} \Bigr) , \boldsymbol{\infty} \Bigr] \Bigr),
\end{equation*}
where $\hat \nu^{(2)}$ is defined in \eqref{non_stan_rank_conv}. We
again use $\hat \beta_1 =1, \hskip 0.1 cm \hat \beta_2 = 2$ and
estimate $\alpha^{(2)}$ using the Hill estimator. Then, using
the above estimator, we compute the probability $P[X_1 > 100, Y_1 >
\sqrt{10}]$ for different values of $k$ and plot it as a graph in
Figure \ref{risk_probability_plot}. The values of $k$ are chosen
between $k = 500$ and $k = 5000.$ 
 
Using the true distribution of $(X_1, Y_1),$ we calculate
$P[X_1 > 100, Y_1 > \sqrt{10}]=0.001.$ In Figure \ref{risk_probability_plot}, we observe that the plot of the risk estimates
 obtained using the Heffernan-Resnick \cite{heffernan:resnick:2005} estimator 
 is more stable but  our current estimator of $P[X_1
> 100, Y_1 > \sqrt{10}]$ is more accurate for most $k$ in the range
$k = 500$ to $k = 5000$.
 
 \begin{figure}
\centering
\scalebox{0.6}
{\includegraphics{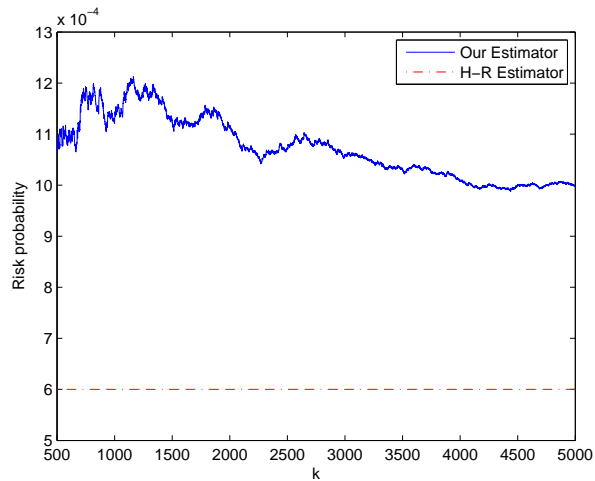}}
\caption{ Plots of estimates of $P[ X_1 > 100, Y_1 > \sqrt{10}]$ for different values of k (sample size = 5000) using both our  and H-R(Heffernan-Resnick \cite{heffernan:resnick:2005}) estimator}
\label{risk_probability_plot}
\end{figure}

The Heffernan-Resnick \cite{heffernan:resnick:2005} estimator of $P[X_1 > t_1, Y_1
  > t_2 ]$ uses an empirical distribution function 
and thus is subject to the defect that a zero estimate is
reported for the risk probability when $t_1$ and $t_2$ are  high but actually $P[X_1
> t_1, Y_1 > t_2 ]$ is non-zero. Irrespective of
how high the threshold is, our estimator does not estimate $P[X_1 >
t_1, Y_1 > t_2 ]$ as zero, unless it is
actually zero. 

As an illustration, we reduced the sample size to $n=500$
and applied the two estimators of the risk probability
$P[X_1 > 100, Y_1 > \sqrt{10}]$.
As suspected, the Heffernan-Resnick
\cite{heffernan:resnick:2005} estimator  estimates the
probability $P[X_1 > 100, Y_1 > \sqrt{10}]$ as zero, whereas our
estimator is still reasonably
accurate. This is shown in Figure
\ref{risk_probability_plot1} where $k$ ranges between
$k = 50$ and $k= 500.$ 

 \begin{figure}
\centering
\scalebox{0.6}
{\includegraphics{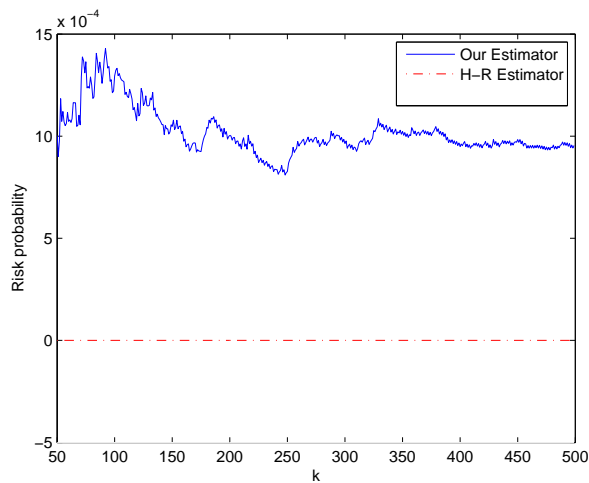}}
\caption{ Plots of estimates of $P[ X_1 > 100, Y_1 > \sqrt{10}]$ for different values of k (sample size = 500) using both our  and H-R(Heffernan-Resnick \cite{heffernan:resnick:2005}) estimator}
\label{risk_probability_plot1}
\end{figure}

\subsection{Internet traffic data} We analyze HTTP Internet response
data  consisting of sizes and durations of responses collected
during a four hour period from 1--5 pm on April 26, 2001 by the  
University of North Carolina at Chapel Hill
Department
of Computer Science's Distributed and Real-Time Systems Group under
the direction of Don Smith and Kevin Jaffey.  This dataset was also analyzed
in \cite{heffernan:resnick:2005}. 
We investigate 
joint behavior of two variables - size of response and throughput
(size of response/time duration of response)
and estimate the probability that both the size and rate are big
  as a  measure of burstiness. 

We start by estimating  marginal tail parameters. We use
QQ plots \citep[page 97]{resnickbook:2007}  (not shown here) to
choose the value $k =5000$ for both the variables size and rate.
Using this $k$,  we get the estimates of tail indices $\hat \beta^1 = 1.15$ and $\hat \beta^2 = 1.51$ for size and rate using the QQ estimator.

Next, we investigate presence of asymptotic independence
by plotting an estimated density of the
 transformed spectral measure $\tilde S( \cdot),$ defined in Section
 \ref{diff_par}.  In agreement with Heffernan and Resnick
 \cite{heffernan:resnick:2005}, 
our estimated density plots for different values of $k$
for  the transformed spectral measure  show two modes at the points $0$ and $1$ and
take values close to zero in between, thus indicating asymptotic
independence of size and rate (plots are not shown). 

Is hidden regular variation  present? The
Hill plot in Figure \ref{data_hill_spec}  of $\{ m^{(2)}_i,
1\leq i\leq n\}$ suggests this is so
and  we proceed to estimate the density of
the transformed hidden spectral measure. Figure \ref{data_hill_spec}
gives  plots of the estimated
transformed hidden spectral densities for $k = 500, 1000, 5000$.

\begin{figure}
\centering
\scalebox{0.8}
{\includegraphics{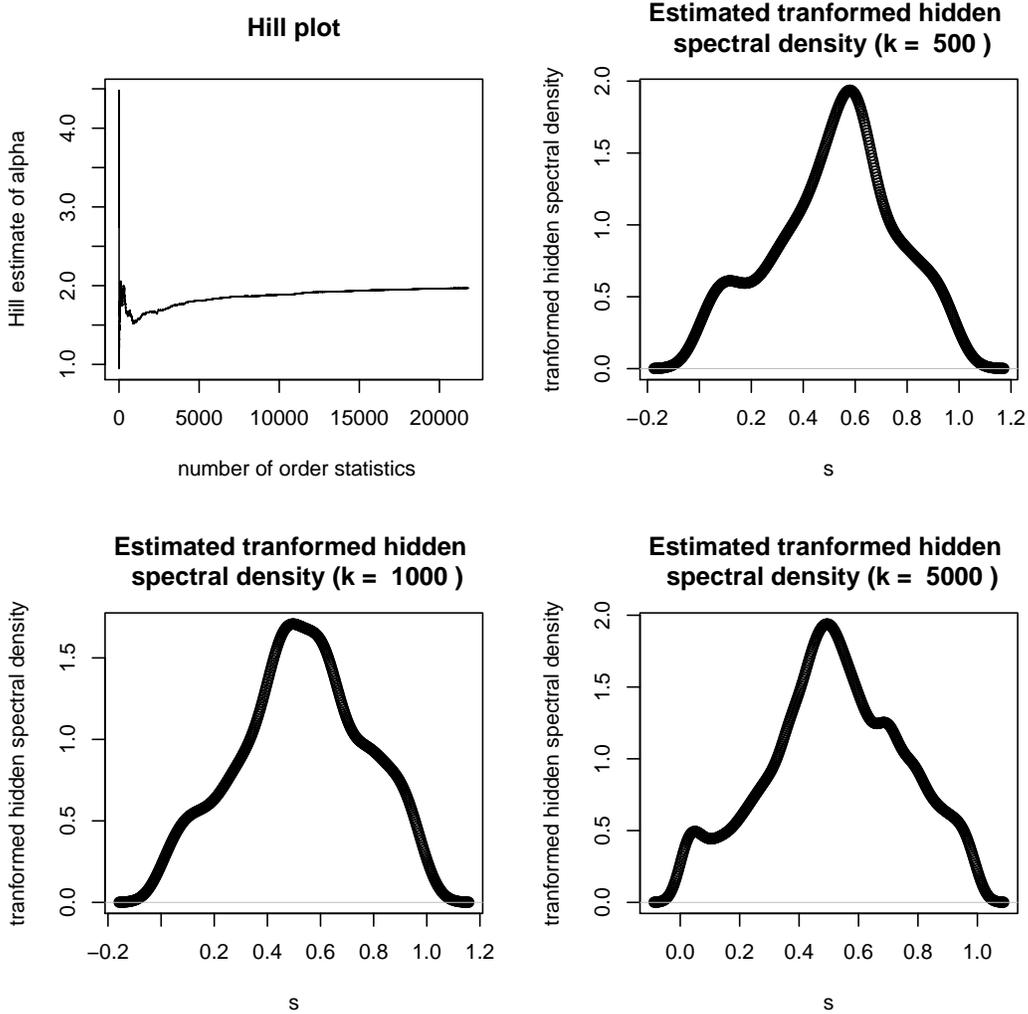}}
\caption{Hill plot of $\alpha^{(2)}$ and estimated transformed hidden spectral densities }
\label{data_hill_spec}
\end{figure}

Next, 
we  estimate  probabilities of risk sets of
the form $[ \text{Size} > x, \text{Rate} > y ]$ which we consider as measures
  of burstiness.
Examination of the (Size, Rate) data, 
indicates
$x = 2 \times 10^7$ and $y = 10^5$ are  reasonably high
thresholds. We use both our estimator and the estimator given in
\cite{heffernan:resnick:2005} to compute $P[ \text{Size} > x, \text{Rate} > y]$ for
different values of $k$ from $k=500$ to $k=n$ and plot them in Figure
\ref{risk_probability_plot2}.

 \begin{figure}
\centering
\scalebox{0.6}
{\includegraphics{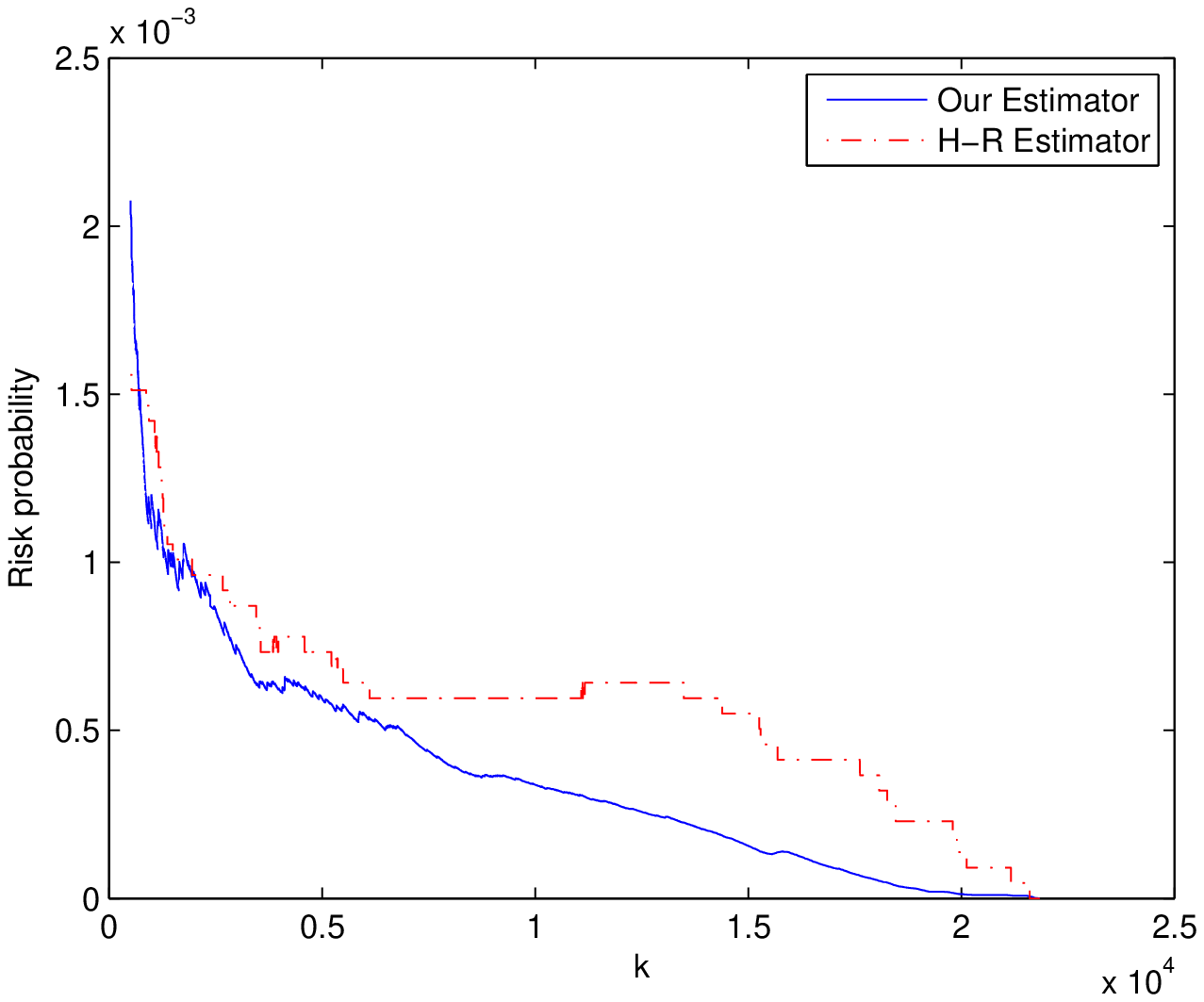}}
\caption{ Plots of estimates of $P[ \text{Size} > 2 \times 10^7, \text{Rate} > 10^5 ]$ for different values of $k$ using our estimator and H-R(Heffernan-Resnick \cite{heffernan:resnick:2005}) estimator}
\label{risk_probability_plot2}
\end{figure}

We also estimated $P[ \text{Size} > x, \text{Rate} > y ]$ for higher thresholds $x = 2
 \times 10^8$ and $y = 10^7,$ as a measure of extreme
traffic burstiness.  Again, we use both our estimator and the Heffernan-Resnick
 \cite{heffernan:resnick:2005}  estimator to estimate $P[ \text{Size}
 > x, \text{Rate} > y  ]$ 
for different values of $k$ from $k=500$ to $k=n$ and
 plot them in Figure \ref{risk_probability_plot3}. If hidden regular variation is present for the pair (Size, Rate), then the actual
risk probability cannot be zero. The Heffernan-Resnick
\cite{heffernan:resnick:2005} estimator reports an estimate of zero but
ours does not.

\begin{figure}
\centering
\scalebox{0.6}
{\includegraphics{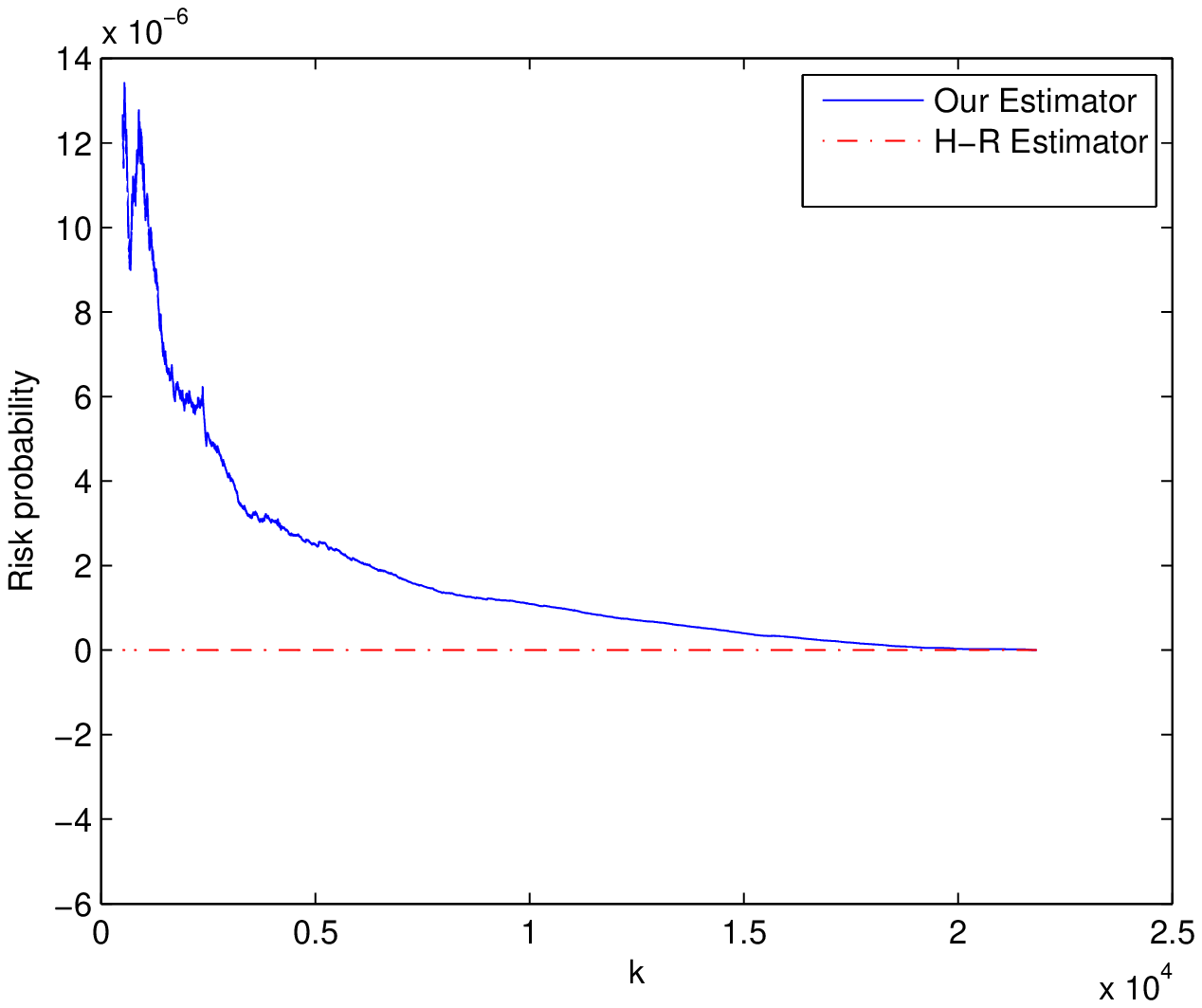}}
\caption{ Plots of estimates of $P[ \text{Size} > 2 \times 10^8, \text{Rate} > 10^6 ]$ for different values of $k$ using our estimator and H-R( Heffernan-Resnick \cite{heffernan:resnick:2005}) estimator}
\label{risk_probability_plot3}
\end{figure}

\section{Concluding remarks}\label{conclusion}
Hidden regular variation provides a sub-family of the distributions
having  regular variation on $\E$
that is sometimes equipped to 
obtain more precise estimates of probabilities
of certain risk sets, which are crudely estimated as zero by  regular
variation on $\E$; two examples are shown in Section
\ref{risksetcomputation}.  

The theory of HRV has deficiencies. Consider $d = 3$, and on
the planes of $\E^{(2)}$, suppose the random vector ${\bf{Z}}$ has regular
variation with three different tail indices $\alpha^{(2),1} <
\alpha^{(2),2} < \alpha^{(2),3}$. As a convention,
 say ${\bf{Z}}$ has regular variation
with tail index $\alpha^{(2), i}$ on $\{ {\bf{x}} \in \E^{(2)}: x^i =
0 \}$, $i =1 , 2, 3$. If we follow our HRV model and method of
estimation, we ignore the regular variation ${\bf{Z}}$ exhibits on $\{
{\bf{x}} \in \E^{(2)}: x^2 = 0 \}$ with tail index $\alpha^{(2),2}$,
which is actually more important than the regular variation on
$\E^{(3)}$. A way to repair this defect is the
following alternative method: In $d$
dimensions, first consider the big cone $\E$, then consider all
the $d \choose 2$ pairs of components of ${\bf{Z}}$ and their regular
variation on $(0, \infty]^2$, then consider all the $d \choose 3$
triplets of components of ${\bf{Z}}$ and their regular variation on
$(0, \infty]^3$, and so on. This alternative method requires considering
regular variation on $2^d -1$ cones, whereas our HRV formulation
 requires considering regular variation on at most $d$
cones. The alternative method is difficult to apply in high
  dimensions. Obviously, there is considerable flexibility in
choosing a nested sequence of cones and informed choice by a
practicioner will be governed by the application.

Another potential defect of our formulation of HRV  is that it is designed to deal
with the kind of degeneracy which arises when the limit measures are
concentrated on the axes, planes etc. But the limit measures might
exhibit different kind of degeneracies. For example, consider the
degeneracy in the case of complete asymptotic dependence, where the
limit measure is concentrated on the ray $\{ {\bf{x}} \in \E: x^1 =
x^2 = \cdots = x^d \}$. One might think of removing the ray and
considering hidden regular variation on the cone $\E \setminus \{
{\bf{x}} \in \E: x^1 = x^2 = \cdots = x^d \}$. Our HRV
  discussion does
not address this issue and  we are currently actively thinking about this
as well as methods of unifying theories of HRV and the conditional
extreme value model
\citep{
heffernan:tawn:2004,
heffernan:resnick:2007, das:resnick:2009,
das:resnick:2008b,
fougeres2010limit,
fougeres2009estimation}. 

Other variants of our formulation are possible. The hidden variation
on a subcone could be of extreme value type other than regular
variation and even if we focus only on the hidden variation being regular
variation, one could envisage different scaling functions for the
hidden variation. These topics are also being actively considered.

In estimating the limit measure $\nu^{(l)}$ of hidden regular
variation on $\E^{(l)}$, $2 \le l \le d$, we have suggested a method
that exploits the semi-parametric structure of $\nu^{(l)}$. Also, we
have constructed a consistent estimator of $\nu^{(l)}$ which relies
completely on non-parametric methods, as given in
\eqref{est_nu0_mostgeneral_e0l}. Our numerical experiments in
Section \ref{sec:eg}
clearly suggest  the method 
exploiting the semi-parametric structure is superior, presumably
because it uses more available information  about the limit 
measure $\nu^{(l)}$. However,  we have no precise, provable comparison.
                
An important statistical issue is we  have only
developed  parameter estimators which are consistent. We have not 
 yet developed  theory which allows one to report on confidence
 intervals for parameter estimates or risk probability estimates.
                    
For characterizations of hidden regular variation, it
is important to identify when  
$\nu^{(l)}(\{ {\bf{x}} \in \E^{(l)} : ||{\bf{x}}|| > 1 \})$ is
finite. We found a moment condition to check this, but it requires
knowledge of the hidden angular measure $S^{(l)}$. A similar problem
appeared in checking the finiteness of the right side of
\eqref{chk_risk_set_finite_eqn}. 
It would be useful to have a statistical test for finiteness.

\section{Acknowledgements}

S. I. Resnick and A. Mitra were partially supported by ARO
  Contract W911NF-07-1-0078 at Cornell University.

\bibliographystyle{plainnat.bst}
\bibliography{/Users/sidresnick/Documents/SidFiles/bibfile}

\def\cprime{$'$}
\begin{thebibliography}{10}

\bibitem{billingsley:1999}
P.~Billingsley.
\newblock {\em Convergence of Probability Measures}.
\newblock John Wiley \& Sons Inc., New York, second edition, 1999.
\newblock A Wiley-Interscience Publication.

\bibitem{bingham:goldie:teugels:1987}
N.H. Bingham, C.M. Goldie, and J.L. Teugels.
\newblock {\em Regular Variation}.
\newblock Cambridge University Press, 1987.

\bibitem{bruun:tawn:1998}
J.T. Bruun and J.A. Tawn.
\newblock {Comparison of approaches for estimating the probability of coastal
  flooding.}
\newblock {\em J. R. Stat. Soc., Ser. C, Appl. Stat.}, 47(3):405--423, 1998.

\bibitem{coles:2001}
S.G. Coles.
\newblock {\em {An Introduction to Statistical Modeling of Extreme Values.}}
\newblock {Springer Series in Statistics. London: Springer. xiv, 210 p. },
  2001.

\bibitem{coles:heffernan:tawn:1999}
S.G. Coles, J.E. Heffernan, and J.A. Tawn.
\newblock Dependence measures for extreme value analyses.
\newblock {\em Extremes}, 2(4):339--365, 1999.

\bibitem{das:resnick:2008b}
B.~Das and S.I. Resnick.
\newblock Conditioning on an extreme component: Model consistency and regular
  variation on cones.
\newblock Technical report, Cornell University, School of ORIE, 2008.
\newblock \url{http://arxiv.org/abs/0805.4373}; to appear: {\it Bernoulli\/}.

\bibitem{das:resnick:2009}
B.~Das and S.I. Resnick.
\newblock {Detecting a conditional extreme value model}.
\newblock {\em Extremes}, pages 1--33, 2009.

\bibitem{davydov:molchanov:zuyev:2007}
Y.~Davydov, I.~Molchanov, and S.~Zuyev.
\newblock Stable distributions and harmonic analysis on convex cones.
\newblock {\em C. R. Math. Acad. Sci. Paris}, 344(5):321--326, 2007.

\bibitem{dehaan:deronde:1998}
L.~de~Haan and J.~de~Ronde.
\newblock Sea and wind: multivariate extremes at work.
\newblock {\em Extremes}, 1(1):7--46, 1998.

\bibitem{dehaan:ferreira:2006}
L.~de~Haan and A.~Ferreira.
\newblock {\em Extreme Value Theory: An Introduction}.
\newblock Springer-Verlag, New York, 2006.

\bibitem{dehaan:omey:1984}
L.~de~Haan and E.~Omey.
\newblock Integrals and derivatives of regularly varying functions in
  $\mathbb{R}^d$ and domains of attraction of stable distributions {I}{I}.
\newblock {\em Stochastic Process. Appl.}, 16(2):157--170, 1984.

\bibitem{draisma:drees:ferreira:dehaan:2004}
G.~Draisma, H.~Drees, A.~Ferreira, and L.~de~Haan.
\newblock {Bivariate tail estimation: dependence in asymptotic independence}.
\newblock {\em Bernoulli}, 10(2):251--280, 2004.

\bibitem{fougeres2009estimation}
A.L. Foug{\'e}res and P.~Soulier.
\newblock {Estimation of conditional laws given an extreme component}.
\newblock {\em arXiv:0806.2426v3; \url{http://arxiv.org/abs/0806.2426v3}},
  2009.

\bibitem{fougeres2010limit}
A.L. Foug{\`e}res and P.~Soulier.
\newblock {Limit conditional distributions for bivariate vectors with polar
  representation}.
\newblock {\em Stochastic Models}, 26(1):54--77, 2010.

\bibitem{heffernan:resnick:2005}
J.E. Heffernan and S.I. Resnick.
\newblock Hidden regular variation and the rank transform.
\newblock {\em Adv.\ Appl.\ Prob.}, 37(2):393--414, 2005.

\bibitem{heffernan:resnick:2007}
J.E. Heffernan and S.I. Resnick.
\newblock Limit laws for random vectors with an extreme component.
\newblock {\em Ann. Appl. Probab.}, 17(2):537--571, 2007.

\bibitem{heffernan:tawn:2004}
J.E. Heffernan and J.A. Tawn.
\newblock A conditional approach for multivariate extreme values (with
  discussion).
\newblock {\em JRSS B}, 66(3):497--546, 2004.

\bibitem{campos:marron:resnick:jaffay:2005}
F.~Hern{\'a}ndez-Campos, J.S. Marron, C.~Park, S.I. Resnick, and K.~Jaffay.
\newblock Extremal dependence: Internet traffic applications.
\newblock {\em Stochastic Models}, 21(1):1--35, 2005.

\bibitem{ledford:tawn:1996}
A.W. Ledford and J.A. Tawn.
\newblock Statistics for near independence in multivariate extreme values.
\newblock {\em Biometrika}, 83(1):169--187, 1996.

\bibitem{ledford:tawn:1997}
A.W. Ledford and J.A. Tawn.
\newblock Modelling dependence within joint tail regions.
\newblock {\em J. Roy. Statist. Soc. Ser. B}, 59(2):475--499, 1997.

\bibitem{maulik:resnick:2005}
K.~Maulik and S.I. Resnick.
\newblock Characterizations and examples of hidden regular variation.
\newblock {\em Extremes}, 7(1):31--67, 2005.

\bibitem{peng:1999}
L.~Peng.
\newblock Estimation of the coefficient of tail dependence in bivariate
  extremes.
\newblock {\em Statist. Probab. Lett.}, 43(4):399--409, 1999.

\bibitem{poon:rockinger:tawn:2003}
S.-H Poon, M.~Rockinger, and J.~Tawn.
\newblock Modelling extreme-value dependence in international stock markets.
\newblock {\em Statist. Sinica}, 13(4):929--953, 2003.
\newblock Statistical applications in financial econometrics.

\bibitem{ramos:ledford:2009}
A.~Ramos and A.~Ledford.
\newblock A new class of models for bivariate joint tails.
\newblock {\em Journal of the Royal Statistical Society: Series B (Statistical
  Methodology)}, 71(1):219--241, 2009.

\bibitem{resnick:1987}
S.I. Resnick.
\newblock {\em Extreme Values, Regular Variation and Point Processes}.
\newblock Springer-Verlag, New York, 1987.

\bibitem{resnick:2002a}
S.I. Resnick.
\newblock Hidden regular variation, second order regular variation and
  asymptotic independence.
\newblock {\em Extremes}, 5(4):303--336 (2003), 2002.

\bibitem{resnick:2004}
S.I. Resnick.
\newblock The extremal dependence measure and asymptotic independence.
\newblock {\em Stochastic Models}, 20(2):205--227, 2004.

\bibitem{resnickbook:2007}
S.I. Resnick.
\newblock {\em Heavy Tail Phenomena: Probabilistic and Statistical Modeling}.
\newblock Springer Series in Operations Research and Financial Engineering.
  Springer-Verlag, New York, 2007.
\newblock ISBN: 0-387-24272-4.

\bibitem{resnick:2008}
S.I. Resnick.
\newblock Multivariate regular variation on cones: application to extreme
  values, hidden regular variation and conditioned limit laws.
\newblock {\em Stochastics: An International Journal of Probability and
  Stochastic Processes}, 80(2):269--298, 2008.
\newblock \url{http://www.informaworld.com/10.1080/17442500701830423}.

\bibitem{resnick:greenwood:1979}
S.I. Resnick and P.~Greenwood.
\newblock A bivariate stable characterization and domains of attraction.
\newblock {\em J. Multivariate Anal.}, 9(2):206--221, 1979.

\bibitem{resnick:starica:1997a}
S.I. Resnick and C.~St\u{a}ric\u{a}.
\newblock Smoothing the {H}ill estimator.
\newblock {\em Adv. Applied Probab.}, 29:271--293, 1997.

\bibitem{starica:1999}
C.~St\u{a}ric\u{a}.
\newblock Multivariate extremes for models with constant conditional
  correlations.
\newblock {\em J. Empirical Finance}, 6:515--553, 1999.

\end{thebibliography}

\begin{appendix}

\section{}\label{proof_stan_transform}

\begin{proof}[Proof of Proposition \ref{stan_transform}]

The idea of the proof is similar to Proposition 2 of \cite{resnick:2002a}. Define $\E_{l \setminus \infty} = \E^{(l)}\backslash \cup_{1 \le i_1 < i_2 < \cdots i_l \le d} [ x^{i_1} = \infty, x^{i_2} = \infty, \cdots, x^{i_l} = \infty]$ and $\E_2 = (0, \infty) \times \delta \aleph^{(l)}.$ Define a continuous bijection $Q^{(l)} : \E_{l\setminus \infty} \to \E_2$ as in \eqref{define_T_e0l}. We first show the equivalence of the vague convergence of measures restricted to $\E_{l\setminus \infty}$ and $\E_2$, and then extend the convergence to the corresponding whole spaces using the scaling property.\\
{\bf{Step 1:}}
First, we prove the direct part. So, we suppose that \eqref{reg_var_on_E0l} holds with $\nu^{(l)}(\aleph^{(l)}) = 1$. Hence, the convergence also holds with the measures being restricted to $\E_{l \setminus \infty}$, i.e.
\begin{equation*}
tP\left[ \frac{{\bf{Z}}}{b^{(l)}(t)} \in \cdot \cap \E_{l\setminus \infty} \right] \stackrel{v}{\rightarrow} \nu^{(l)}(\cdot \cap \E_{l\setminus \infty}) \quad \text{in $M_+(\E_{l\setminus \infty})$}.
\end{equation*}
Now, we proceed to show that for each compact set $K_2$ in $\E_2$, ${(Q^{(l)})}^{-1}(K_2)$ is a compact set of $\E_{l\setminus \infty}$. Note that the compact sets in $\E_{l\setminus \infty}$ are those closed sets $K$ for which every ${\bf{x}} \in K$ satisfies the property that $r \le x^{(l)} \le s$ for some $0 < r < s$ \cite[page 170]{resnickbook:2007}. Take a compact set $K_2$ in $\E_2$. We claim that $K_2$ must be contained in a set $\tilde K_2$ of the form 
$\tilde K_2 = [r, s] \times \delta \aleph^{(l)}$. Now, from the description of the compact sets of $\E_{l\setminus \infty}$, ${(Q^{(l)})}^{-1}(\tilde K_2) = \{ {\bf{x}} \in \E^{(l)} : r \le x^{(l)} \le s \}$ is compact in $\E_{l\setminus \infty}$. Also, since $Q^{(l)}$ is continuous, ${(Q^{(l)})}^{-1}(K_2)$ is closed.  Therefore, ${(Q^{(l)})}^{-1}(K_2)$ is a closed subset of the compact set ${(Q^{(l)})}^{-1}(\tilde K_2)$ and hence is compact in $\E_{l\setminus \infty}$.
So, using Proposition 5.5 (b) of \cite{resnickbook:2007} we get 
\begin{equation*}
tP\left[ \left( \frac{Z^{(l)}}{b^{(l)}(t)}, \frac{{\bf{Z}}}{Z^{(l)}} \right) \in \cdot \cap \E_2 \right] \stackrel{v}{\rightarrow} \nu_{\alpha^{(l)}} \times S^{(l)} ( \cdot \cap \E_2)  \quad \text{in $M_+(\E_2)$}.
\end{equation*}
Now, we want to extend the convergence over the whole space $(0, \infty] \times \delta \aleph^{(l)}.$ Choose any relatively compact subset $\Lambda$ of $\delta \aleph^{(l)}$ such that $S^{(l)}(\delta \Lambda) = 0$ and choose $s > r > 0$. Then,
\begin{align}\label{extn_1}
tP\left[ \frac{Z^{(l)}}{b^{(l)}(t)} > r, \frac{{\bf{Z}}}{Z^{(l)}} \in \Lambda \right] &\ge tP\left[ \frac{Z^{(l)}}{b^{(l)}(t)} \in (r, s], \frac{{\bf{Z}}}{Z^{(l)}} \in \Lambda \right] \to \nu_{\alpha^{(l)}}((r, s])S^{(l)}(\Lambda) \nonumber
\intertext{as $t \to \infty,$ which implies that for $s > r > 0$,}
\liminf_{t \to \infty} tP&\left[ \frac{Z^{(l)}}{b^{(l)}(t)} > r, \frac{{\bf{Z}}}{Z^{(l)}} \in \Lambda \right] \ge \nu_{\alpha^{(l)}}((r, s])S^{(l)}(\Lambda). \nonumber \\
\intertext{Hence, letting $s \to \infty$, we get}
\liminf_{t \to \infty} tP&\left[ \frac{Z^{(l)}}{b^{(l)}(t)} > r, \frac{{\bf{Z}}}{Z^{(l)}} \in \Lambda \right] \ge r^{-\alpha^{(l)}}S^{(l)}(\Lambda).
\end{align}
Now, we know that
\begin{align}\label{extn_2}
tP\left[ \frac{Z^{(l)}}{b^{(l)}(t)} > r, \frac{{\bf{Z}}}{Z^{(l)}} \in \Lambda \right] = tP\left[ \frac{Z^{(l)}}{b^{(l)}(t)} \in (r, s],  \frac{{\bf{Z}}}{Z^{(l)}} \in \Lambda \right] + tP\left[ \frac{Z^{(l)}}{b^{(l)}(t)} > s, \frac{{\bf{Z}}}{Z^{(l)}} \in \Lambda \right],
\end{align}
and
\begin{align}\label{extn_new1}
\lim_{ s \to \infty} \limsup_{t \to \infty}& tP\left[ \frac{Z^{(l)}}{b^{(l)}(t)} > s, \frac{{\bf{Z}}}{Z^{(l)}} \in \Lambda \right] \le \lim_{ s \to \infty} \limsup_{t \to \infty} tP\left[ \frac{Z^{(l)}}{b^{(l)}(t)} > s \right] \nonumber \\
&= \lim_{s \to \infty} \nu^{(l)}( \{ {\bf{x}} \in \E^{(l)} : x^{(l)} >s \}) 
= \lim_{s \to \infty} s^{-\alpha^{(l)}}\nu^{(l)}( \{ {\bf{x}} \in \E^{(l)} : x^{(l)} > 1 \}) = 0.
\end{align}
The first equality in the above set of relations follows from \eqref{reg_var_on_E0l}. Hence, from \eqref{extn_2} and \eqref{extn_new1}, we get 
\begin{equation}\label{extn_3}
\limsup_{ t \to \infty} tP\left[ \frac{Z^{(l)}}{b^{(l)}(t)} > r, \frac{{\bf{Z}}}{Z^{(l)}} \in \Lambda \right] \le \lim_{s \to \infty} \limsup_{t \to \infty}  tP\left[ \frac{Z^{(l)}}{b^{(l)}(t)} \in (r, s], \frac{{\bf{Z}}}{Z^{(l)}}  \in \Lambda \right] = r^{-\alpha^{(l)}}S^{(l)}(\Lambda).
\end{equation} 
Hence, from \eqref{extn_1} and \eqref{extn_3}, we conclude
the direct part of the proof:
\begin{equation*}
\lim_{ t \to \infty} tP\left[ \frac{Z^{(l)}}{b^{(l)}(t)} > r, \frac{{\bf{Z}}}{Z^{(l)}} \in \Lambda \right] = r^{-\alpha^{(l)}}S^{(l)}(\Lambda).
\end{equation*}

{\bf{Step 2:}} To see the converse, again we prove first the vague convergence of the restricted measures in $M_+(\E_{l \setminus \infty})$ and then extend it to convergence of measures in $M_+(\E^{(l)})$. We assume that \eqref{stan_case_transform} holds. Restriction on $\E_2$ gives 
\begin{equation*}
tP\left[ \left( \frac{Z^{(l)}}{b^{(l)}(t)}, \frac{{\bf{Z}}}{Z^{(l)}} \right) \in \cdot \cap \E_2\right] \stackrel{v}{\rightarrow} \nu_{\alpha^{(l)}} \times S^{(l)}(\cdot \cap \E_2) \quad \text{in $M_+(\E_2)$}.
\end{equation*}

 First we note that the compact sets of $\E_2$ are those closed sets $K$ for which every $(w, {\bf{v}}) \in K$ satisfies the property that $ r \le w \le s$ for some $0 < r < s.$ Take a compact set $K_1$ of $\E_{l \setminus \infty}$. Observe, from the description of the compact sets of $\E_{l \setminus \infty}$ as given before, that $K_1$ must be contained in a set $\tilde K_1$ of the form $\tilde K_1 = \{ {\bf{x}} \in \E^{(l)} : x^{(l)} \in [r, s] \}$. From the description of compact sets of $\E_2$, $Q^{(l)}(\tilde K_1) = [r, s] \times \delta \aleph^{(l)}$ is compact in $\E_2$. Since ${(Q^{(l)})}^{-1}$ is also continuous, the set $Q^{(l)}(K_1)$ is closed, and hence, being a closed subset of a compact set $Q^{(l)}(\tilde K_1)$, is compact. Therefore, using the continuous map ${(Q^{(l)})}^{-1}: \E_2 \to \E_{l \setminus \infty}$ and Proposition 5.5 (b) of \cite{resnickbook:2007}, we get
\begin{equation*}
tP\left[ \frac{{\bf{Z}}}{b^{(l)}(t)} \in \cdot \cap \E_{l \setminus \infty} \right] \stackrel{v}{\rightarrow} \nu^{(l)}(\cdot \cap \E_{l \setminus \infty}) \quad \text{in $M_+(\E_{l \setminus \infty})$}.
\end{equation*}
 
Now, we want to extend this convergence over the whole space $\E^{(l)}$. Choose a relatively compact set $A$ of $\E^{(l)}$ such that $\nu^{(l)}(\delta A) = 0$. Note that, from the description of relatively compact sets in $\E^{(l)}$ as given in Section \ref{formal_definition_model}, $A \subseteq \{ {\bf{x}} \in \E^{(l)} : x^{(l)} > r \}$ for some $r > 0$. Also, from the earlier description of compact sets of $\E_{l \setminus \infty}$, it follows that for all $s > r$, $A \cap \{ {\bf{x}} \in \E^{(l)} : x^{(l)} < s \}$ is a relatively compact set of $\E_{l \setminus \infty}$, and 
\begin{equation*}
\nu^{(l)}(\delta(A \cap \{ {\bf{x}} \in \E^{(l)} : x^{(l)} < s \} )) \le \nu^{(l)}(\delta A) + \nu^{(l)}(\{ {\bf{x}} \in \E^{(l)} : x^{(l)} = s \}) = 0.
\end{equation*}
Hence, it follows that 
\begin{align*}
tP\left[ \frac{{\bf{Z}}}{b^{(l)}(t)} \in A \right] \ge tP&\left[ \frac{{\bf{Z}}}{b^{(l)}(t)} \in A \cap \{ {\bf{x}} \in \E^{(l)} : x^{(l)} < s \} \right] \to \nu^{(l)} (A \cap \{ {\bf{x}} \in \E^{(l)} : x^{(l)} < s \}),
\end{align*}
which implies, by letting $s \to \infty$,
\begin{align}\label{extn2_1}
\liminf_{t \to \infty} tP&\left[ \frac{{\bf{Z}}}{b^{(l)}(t)} \in A \right] \ge \nu^{(l)} (A),
\end{align}
since by \eqref{scaling_e0l}, $\nu^{(l)}(\{ {\bf{x}} \in \E^{(l)} : x^{(l)} = \infty \}) = 0$.
Now, we know that
\begin{align}\label{extn2_2}
tP\left[ \frac{{\bf{Z}}}{b^{(l)}(t)} \in A \right] = tP&\left[ \frac{{\bf{Z}}}{b^{(l)}(t)} \in A \cap \{ {\bf{x}} \in \E^{(l)} : x^{(l)} < s \} \right] + tP\left[ \frac{{\bf{Z}}}{b^{(l)}(t)} \in A \cap \{ {\bf{x}} \in \E^{(l)} : x^{(l)} \ge s \} \right],
\end{align}
and
\begin{align}\label{extn2_new1}
\lim_{s \to \infty} \limsup_{t \to \infty} &tP\left[ \frac{{\bf{Z}}}{b^{(l)}(t)} \in A \cap \{ {\bf{x}} \in \E^{(l)} : x^{(l)} \ge s \} \right] \le \lim_{ s \to \infty} \limsup_{t \to \infty} tP\left[ \frac{{\bf{Z}}}{b^{(l)}(t)} \in \{ {\bf{x}} \in \E^{(l)} : x^{(l)} \ge s \} \right] \nonumber \\
&= \lim_{s \to \infty} \limsup_{t \to \infty} tP\left[ \frac{Z^{(l)}}{b^{(l)}(t)}  \ge s  \right] = \lim_{s \to \infty} \nu_{\alpha^{(l)}}([s, \infty]) = \lim_{s \to \infty} s^{-\alpha^{(l)}} = 0.
\end{align}
The second equality in the above set of relations follows from \eqref{stan_case_transform}. Hence, from \eqref{extn2_2} and \eqref{extn2_new1}, we get
\begin{equation}\label{extn2_3}
\limsup_{ t \to \infty}tP\left[ \frac{{\bf{Z}}}{b^{(l)}(t)} \in A \right] \le \lim_{s \to \infty} \limsup_{t \to \infty} tP\left[ \frac{{\bf{Z}}}{b^{(l)}(t)} \in A \cap \{ {\bf{x}} \in \E^{(l)} : x^{(l)} < l \} \right] = \nu^{(l)} (A).
\end{equation} 
Therefore, from \eqref{extn2_1} and \eqref{extn2_3}, we conclude
\begin{equation*}
\lim_{ t \to \infty} tP\left[ \frac{{\bf{Z}}}{b^{(l)}(t)} \in A \right] = \nu^{(l)} (A),
\end{equation*}
which completes the converse part of the proof.

\end{proof}

\section{}\label{proof_prop_spec_most_general}

\begin{proof}[Proof of Proposition \ref{spec_trans_mostgeneral}] 

The idea of this proof is similar to Theorem 6.1 of \cite[page 173]{resnickbook:2007}.
Define,
$\E_{l \setminus \infty} = \E^{(l)} \backslash \cup_{1 \le i_1 < i_2 < \cdots < i_l \le d} [ x^{i_1} = \infty, x^{i_2} = \infty, \cdots, x^{i_l} = \infty] $ and $\E_2 = (0, \infty)\times  \delta \aleph^{(l)}$. Now, define the continuous bijection $Q^{(l)} : \E_{l \setminus \infty} \to \E_2$ as in \eqref{define_T_e0l}. As in Proposition \ref{stan_transform}, here also the idea of the proof is to first show the equivalence of the weak convergence of random measures restricted to 
$M_+(\E_{l \setminus \infty})$ and $M_+(\E_2)$, and then extend the convergence to the corresponding whole spaces using the scaling property.

{\bf{Step 1:}} First, we prove that \eqref{non_stan_rank_conv} implies \eqref{non_stan_rank_trans_conv}. The convergence in \eqref{non_stan_rank_conv} implies 
\begin{equation*}
 \hat \nu^{(l)}( \cdot  \cap \E_{l \setminus \infty} ) := \frac{1}{k}\sum_{i =1}^n \epsilon_{\left( (1/r_i^j)/ m^{(l)}_{(k)}, 1 \le j \le d \right)}1_{\{m^{(l)}_i/ m^{(l)}_{(k)} < \infty\}} \Rightarrow \nu^{(l)}( \cdot  \cap \E_{l \setminus \infty}) 
 \end{equation*}
on $M_+(\E_{l \setminus \infty})$. Also, as shown in the proof of Proposition \ref{stan_transform}, for any compact set $K_2 \subset \E_2$, ${(Q^{(l)})}^{-1}(K_2)$ is compact in $\E_{l \setminus \infty}$. Then, using Proposition 5.5(b) of \cite{resnickbook:2007} we get
\begin{equation}\label{conv_restricted}
 \hat \nu^{(l)}(\cdot  \cap \E_{l \setminus \infty} ) \circ {(Q^{(l)})}^{-1} := \frac{1}{k} \sum_{i=1}^n \epsilon_{ \left(m^{(l)}_i/ m^{(l)}_{(k)}, \hskip 0.2 cm \left((1/r_i^j)/ m^{(l)}_i, \hskip 0.1 cm 1 \le j \le d \right) \right) }1_{\{m^{(l)}_i/ m^{(l)}_{(k)} < \infty\}}  \Rightarrow \nu_{\alpha^{(l)}} \times S^{(l)}( \cdot \cap \E_2 ) 
\end{equation}
on $M_+(\E_2)$.
To extend the convergence to the space $(0, \infty] \times \delta \aleph^{(l)}$, we use the convergence of laplace functionals and use Theorem 5.2 of \cite[page 137]{resnickbook:2007}. Take $f \in C_K^+( (0, \infty] \times \delta \aleph^ {(l)}) $, where $C_K^+(\F)$ is the set of all continuous functions with compact support  from $\F$ to $\R^+$. To relate this function to one defined in $C_K^+( (0, \infty) \times \delta \aleph^{(l)} )$, for all $\delta, M > 0$, we define a truncation function 
\begin{equation*}
\phi_{\delta, M} = \left \{ \begin{array}{cc} 1 & \hbox{if} \hskip 0.1 cm 0 < t \le M \\ 0 & \hbox{if} \hskip 0.1 cm t > M + \delta, \\
\hbox{linear interpolation} & \hbox{if} \hskip 0.1 cm M < t \le M + \delta. \end{array}\right.
\end{equation*}
Note that $f_{\delta, M}(r, \theta) := f(r, \theta) \phi_{\delta, M}(r) \in C_K^+( (0, \infty) \times \delta \aleph^{(l)} )$ for all $\delta, M > 0$. Note that
\begin{align*}
| E &\left[ \exp\left[ - \frac{1}{k}\sum_{i=1}^n f \left(m^{(l)}_i/ m^{(l)}_{(k)}, \hskip 0.1 cm \left((1/r_i^j)/ m^{(l)}_i, \hskip 0.1 cm 1 \le j \le d \right) \right)  \right] \right] - \exp \left[ -  \nu_{\alpha^{(l)}} \times S^{(l)}(f) \right] |\\
& \le |E\left[ \exp\left[ - \frac{1}{k}\sum_{i=1}^n f\left(m^{(l)}_i/ m^{(l)}_{(k)}, \hskip 0.2 cm \left((1/r_i^j)/ m^{(l)}_i, \hskip 0.1 cm 1 \le j \le d \right) \right)  \right] \right] \\
& \qquad - E \left[ \exp\left[ - \frac{1}{k}\sum_{i=1}^n f_{\delta, M} \left(m^{(l)}_i/ m^{(l)}_{(k)}, \hskip 0.2 cm \left((1/r_i^j)/ m^{(l)}_i, \hskip 0.1 cm 1 \le j \le d \right) \right)  \right] \right] | \\
+ & | E\left[ \exp\left[ - \frac{1}{k}\sum_{i=1}^n f_{\delta, M}\left(m^{(l)}_i/ m^{(l)}_{(k)}, \hskip 0.2 cm \left((1/r_i^j)/ m^{(l)}_i, \hskip 0.1 cm 1 \le j \le d \right) \right)  \right] \right] - \exp \left[ -  \nu_{\alpha^{(l)}} \times S^{(l)} (f_{\delta, M}) \right]|\\
& +  | \exp \left[ -  \nu_{\alpha^{(l)}} \times S^{(l)} (f_{\delta, M}) \right] - \exp \left[ -  \nu_{\alpha^{(l)}} \times S^{(l)} (f) \right]| \\
& \qquad = A + B + C.
\end{align*}
Since, $f_{\delta, M} \in C_K^+( (0, \infty) \times \delta \aleph^ {(l)} )$, by \eqref{conv_restricted}, we get $\lim_{n \to \infty} B = 0$. Now, we proceed to show that $\lim_{M \to \infty} \limsup_{n \to \infty} A = 0$. Notice that
\begin{align*}
\limsup_{n \to \infty} |E&\left[ \exp \left[ - \frac{1}{k}\sum_{i=1}^n f\left(m^{(l)}_i/ m^{(l)}_{(k)}, \hskip 0.2 cm \left((1/r_i^j)/ m^{(l)}_i, \hskip 0.1 cm 1 \le j \le d \right) \right)  \right] \right] \\
 &\qquad - E \left[ \exp\left[ - \frac{1}{k}\sum_{i=1}^n f_{\delta, M} \left(m^{(l)}_i/ m^{(l)}_{(k)}, \hskip 0.2 cm \left((1/r_i^j)/ m^{(l)}_i, \hskip 0.1 cm 1 \le j \le d \right) \right)  \right] \right] | \\
 &= \limsup_{n \to \infty}E\left[ \exp\left[ - \frac{1}{k}\sum_{i=1}^n f\left(m^{(l)}_i/ m^{(l)}_{(k)}, \hskip 0.2 cm \left((1/r_i^j)/ m^{(l)}_i, \hskip 0.1 cm 1 \le j \le d \right) \right)  \right] \right.\\
 &\hskip 1 cm  \times \left. \left( 1 - \exp \left[ -\frac{1}{k} \sum_{i=1}^n(f_{\delta, M} - f)\left(m^{(l)}_i/ m^{(l)}_{(k)}, \hskip 0.2 cm \left((1/r_i^j)/ m^{(l)}_i, \hskip 0.1 cm 1 \le j \le d \right) \right)\right] \right) \right] \\  
 & \le \limsup_{n \to \infty}E\left[  \left( 1 - \exp \left[ -\frac{1}{k} \sum_{i=1}^n(f_{\delta, M} - f)\left(m^{(l)}_i/ m^{(l)}_{(k)}, \hskip 0.2 cm \left((1/r_i^j)/ m^{(l)}_i, \hskip 0.1 cm 1 \le j \le d \right) \right)\right] \right) \right],  \\
 \intertext{which, using the facts that $||f|| = \sup_{( r, \theta) \in (0, \infty] \times \delta \aleph^{(l)}} f(r, \theta) < \infty$, $|| f_{\delta, M} - f || \le ||f|| \cdot ||\phi_{\delta, M} - 1|| \le || f ||$ and $(f_{\delta, M} - f)(x, {\bf{\theta}}) = 0$ for $x < M$, is bounded by} 
 & \limsup_{n \to \infty} E \left[ 1 - \exp\left[ - \frac{1}{k} || f || \sum_{i=1}^n \epsilon_{m^{(l)}_i/ m^{(l)}_{(k)}} ([M, \infty]) \right] \right] \\
 &= E \left[ 1 - \exp\left[ - || f ||\frac{1}{k}\sum_{i=1}^n \epsilon_{\left( (1/r_i^j)/ m^{(l)}_{(k)}, 1 \le j \le d \right)} (\{ {\bf{x}} \in \E^{(l)} : x^{(l)} \in [M, \infty]\}) \right] \right],
 \intertext{which,  by \eqref{non_stan_rank_conv}, converges as $n \to \infty$, to}
 & 1 - \exp\left[ - || f || \cdot \nu^{(l)}(\{ {\bf{x}} \in \E^{(l)} : x^{(l)} \in [M, \infty]\}) \right] \\
 & = 1 - \exp\left[ - || f || \cdot M^{-\alpha^{(l)}} \nu^{(l)}( \aleph^{(l)}) \right] \\
 & = 1 - \exp\left[ - || f || \cdot M^{-\alpha^{(l)}} \right] \rightarrow 0,
\end{align*}
as $M \to \infty$. The argument for $\lim_{M \to \infty} C = 0$ is
similar and is
omitted. Hence, by Theorem 5.2 of \cite[page 137]{resnickbook:2007},
we obtain \eqref{non_stan_rank_trans_conv}.

{\bf{Step 2:}} To see the other part, i.e. \eqref{non_stan_rank_trans_conv} implies \eqref{non_stan_rank_conv}, we use a similar method. The convergence in \eqref{non_stan_rank_trans_conv} implies 
\begin{equation*}
 \frac{1}{k}\sum_{i =1}^n \epsilon_{\left( \left(m^{(l)}_i/ m^{(l)}_{(k)}, \hskip 0.1 cm (1/r_i^j)/ m^{(l)}_i \right), \hskip 0.1cm 1 \le j \le d \right)}((\cdot) \cap \E_2) \Rightarrow \nu_{\alpha^{(l)}} \times S^{(l)} ((\cdot) \cap \E_2)
\end{equation*}
on $M_+(\E_2)$. It is easy to see that ${(Q^{(l)})}^{-1}$ is a continuous bijection. Also, as shown in the proof of Proposition \ref{stan_transform}, for any compact set $K_1 \subset \E_{l \setminus \infty}$, $Q^{(l)}(K_1)$ is compact in $\E_2$. Therefore, using Proposition 5.5(b) of \cite{resnickbook:2007} we get 
\begin{equation}\label{conv_restricted_E_1}
 \hat \nu^{(l)}( (\cdot)  \cap \E_{l \setminus \infty} ) := \frac{1}{k}\sum_{i =1}^n \epsilon_{\left( (1/r_i^j)/ m^{(l)}_{(k)}, 1 \le j \le d \right)}1_{\{m_i^{(l)}/m_{(k)}^{(l)} < \infty\}} \Rightarrow \nu^{(l)}( (\cdot)  \cap \E_{l \setminus \infty})
 \end{equation}
 on $M_+(\E_{l \setminus \infty})$. We use the same truncation function $\phi_{\delta, M}$ to relate functions on $C_K^+(\E^{(l)})$ to ones in $C_K^+(\E_{l \setminus \infty})$. Choose $f \in C_K^+( \E^{(l)})$. Note that $f_{\delta, M}({\bf{x}}) := f({\bf{x}}) \phi_{\delta, M}(x^{(l)}) \in C_K^+( \E_{l \setminus \infty})$ for all $\delta, M > 0$.
 
 \begin{align*}
| &E \left[ \exp\left[ - \frac{1}{k}\sum_{i=1}^n f\left((1/r_i^j)/ m^{(l)}_{(k)}, \hskip 0.1 cm 1 \le j \le d \right)  \right] \right] - \exp \left[ -  \nu^{(l)}(f) \right]|\\
&\le |E\left[ \exp\left[ - \frac{1}{k}\sum_{i=1}^n f\left((1/r_i^j)/ m^{(l)}_i, \hskip 0.1 cm 1 \le j \le d \right)  \right]  - \exp\left[ - \frac{1}{k}\sum_{i=1}^n f_{\delta, M}\left((1/r_i^j)/ m^{(l)}_i, \hskip 0.1 cm 1 \le j \le d \right)  \right] \right] | \\
 & + | E\left[ \exp\left[ - \frac{1}{k}\sum_{i=1}^n f_{\delta, M}\left((1/r_i^j)/ m^{(l)}_i, \hskip 0.1 cm 1 \le j \le d \right) \right] \right] - \exp \left[ -  \nu^{(l)} (f_{\delta, M}) \right]|\\
& \qquad +  | \exp \left[ -  \nu^{(l)}(f_{\delta, M}) \right] - \exp \left[ -  \nu^{(l)} (f) \right]| \\
& \qquad = A + B + C.
\end{align*}
Since, $f_{\delta, M} \in C_K^+( \E_{l \setminus \infty})$, by \eqref{conv_restricted_E_1}, we get $\lim_{n \to \infty} B = 0$. Now, we will show that $\lim_{M \to \infty} \limsup_{n \to \infty} A = 0$.
\begin{align*}
\limsup_{n \to \infty} |E&\left[ \exp \left[ - \frac{1}{k}\sum_{i=1}^n f \left((1/r_i^j)/ m^{(l)}_i, \hskip 0.1 cm 1 \le j \le d \right)   \right] - \exp\left[ - \frac{1}{k}\sum_{i=1}^n f_{\delta, M} \left((1/r_i^j)/ m^{(l)}_i, \hskip 0.1 cm 1 \le j \le d \right)  \right] \right] | \\
 &= \limsup_{n \to \infty}E\left[ \exp\left[ - \frac{1}{k}\sum_{i=1}^n f\left((1/r_i^j)/ m^{(l)}_i, \hskip 0.1 cm 1 \le j \le d \right)  \right] \right.\\
 &\hskip 1 cm  \times \left. \left( 1 - \exp \left[ -\frac{1}{k} \sum_{i=1}^n(f_{\delta, M} - f)\left((1/r_i^j)/ m^{(l)}_i, \hskip 0.1 cm 1 \le j \le d \right) \right] \right) \right] \\  
 & \le \limsup_{n \to \infty}E\left[  \left( 1 - \exp \left[ -\frac{1}{k} \sum_{i=1}^n(f_{\delta, M} - f)\left((1/r_i^j)/ m^{(l)}_i, \hskip 0.1 cm 1 \le j \le d \right) \right] \right) \right],  \\
 \intertext{which, using the facts $||f|| = \sup_{{\bf{x}} \in \E^{(l)}} f({\bf{x}}) < \infty$, $|| f_{\delta, M} - f || \le ||f|| \cdot ||\phi_{\delta, M} - 1|| \le || f ||$ and $(f_{\delta, M} -f)({\bf{x}}) = 0$ for $\{ {\bf{x}} \in \E^{(l)}: x^{(l)} < M \}$, is bounded by} 
 & E \left[ 1 - \exp\left[ - \frac{1}{k} || f || \sum_{i=1}^n \epsilon_{\left( (1/r_i^j)/ m^{(l)}_{(k)}, 1 \le j \le d \right)} (\{ {\bf{x}} \in \E^{(l)} : x^{(l)} \in [M, \infty]\}) \right] \right]\\
 &= \limsup_{n \to \infty} E \left[ 1 - \exp\left[ - || f || \frac{1}{k} \sum_{i=1}^n \epsilon_{m^{(l)}_i/ m^{(l)}_{(k)}} ([M, \infty]) \right] \right] ,
 \intertext{which, by  \eqref{non_stan_rank_trans_conv}, converges as $n \to \infty$, to}
 & 1 - \exp\left[ - || f || \cdot \nu_{\alpha^{(l)}}([M, \infty]) \right] \\
 & = 1 - \exp\left[ - || f || \cdot M^{-\alpha^{(l)}}  \right] \rightarrow 0,
\end{align*}
as $M \to \infty$. The argument for $\lim_{M \to \infty} C = 0$ is
similar and is
 omitted. Hence, we obtain \eqref{non_stan_rank_conv} and this completes the proof.
\end{proof}

\end{appendix}

  \end{document}